\documentclass[a4paper,11pt]{amsart}
\usepackage{amsmath}
\usepackage{amsthm,amsfonts,amssymb,graphics}
\usepackage[foot]{amsaddr}
\usepackage{verbatim}
\usepackage{hyperref}
\usepackage{xcolor}
\usepackage[normalem]{ulem}
\usepackage{enumerate}
\usepackage{geometry}
 \geometry{
 a4paper,
 left=20mm,
 right=20mm,
 top=27mm,
 bottom=27mm
 }
\usepackage{dsfont}
\usepackage{centernot}
\usepackage{bbm}
\usepackage{tikz}
\usetikzlibrary{patterns,calc,arrows.meta}

\newtheorem{theorem}{Theorem}
\newtheorem{corollary}{Corollary}
\newtheorem{lemma}{Lemma}
\newtheorem{proposition}{Proposition}

\theoremstyle{remark}
\newtheorem{assumption}{Assumption}
\newtheorem{remark}{Remark}
\newtheorem{question}{Question}
\newtheorem{definition}{Definition}


\newcommand \id{\mathds 1}
\newcommand {\R} {\mathbb{R}}

\newcommand {\E} {\mathbb{E}}
\newcommand {\N} {\mathbb{N}}

\renewcommand{\P} {\mathbb{P}}
\newcommand {\Var} {\mathrm{Var}}

\newcommand {\Cov} {\mathrm{Cov}}
\newcommand {\DC} {\mathrm{DC}}

\newcommand {\eps}{\varepsilon}

\newcommand{\ind}{\mathds{1}}

\begin{document}
\title[A covariance formula for excursion set components and applications]{A covariance formula for the number of excursion set components of Gaussian fields and applications} 
\author{Dmitry Beliaev\textsuperscript{1}}
\address{\textsuperscript{1}Mathematical Institute, University of Oxford}
\email{belyaev@maths.ox.ac.uk}
\author{Michael McAuley\textsuperscript{2}}
\address{\textsuperscript{2}Department of Mathematics and Statistics, University of Helsinki} 
\email{m.mcauley@cantab.net}
\author{Stephen Muirhead\textsuperscript{3}}
\address{\textsuperscript{3}School of Mathematics and Statistics, University of Melbourne}
\email{smui@unimelb.edu.au}
\subjclass[2010]{60G60, 60G15, 58K05}
\keywords{Gaussian fields, excursion sets, level sets, component count, covariance formula} 
\begin{abstract}
We derive a covariance formula for the number of excursion or level set components of a smooth stationary Gaussian field on $\R^d$ contained in compact domains. We also present two applications of this formula: (1) for fields whose correlations are integrable we prove that the variance of the component count in large domains is of volume order and give an expression for the leading constant, and (2) for fields with slower decay of correlation we give an upper bound on the variance which is of optimal order if correlations are regularly varying, and improves on best-known bounds if correlations are oscillating (e.g.\ monochromatic random waves).
\end{abstract}
\date{\today}
\thanks{}

\maketitle

\section{Introduction}\label{s:introduction}
Let $f:\R^d\to\R$ be a smooth stationary centred Gaussian field, let $\ell \in \R$, and for a compact domain $D \subset \R^d$ let $N_\mathrm{ES}(D)$ and $N_\mathrm{LS}(D)$ denote respectively the number of connected components of the excursion set $\{x \in \R^d : f(x) \geq\ell\}$ and the level set $\{x \in \R^d : f(x) = \ell\}$ which are contained in $D$ (i.e.\ which intersect $D$ but not its boundary). We abbreviate $N_\star(D)$ for $\star \in \{\mathrm{ES},\mathrm{LS}\}$, and refer to these variables generically as the `component count'.
 
Recently, the asymptotic distribution of the component count in large domains has attracted considerable attention, motivated by potential applications (in, e.g., cosmology \cite{bbks86,pra19}, medical imaging \cite{wor96}, quantum chaos~\cite{js17}), as well as by connections to other areas such as percolation theory \cite{bs02}.

By now the first-order asymptotics of the component count are well-understood: in \cite{ns16} Nazarov and Sodin established that, under very general conditions, the component count satisfies a strong law of large numbers with volume-order scaling. Although the limiting constant is defined implicitly, its properties have been studied in, e.g., \cite{kw18,bmm20,bmm20a}.

A natural next step is to investigate second-order properties. Unlike the law of large numbers, it is expected that the order of the variance depends strongly on the covariance structure of the field. In Section \ref{s:known} below we give a brief overview of known and conjectured second-order results.

In this paper we establish an exact covariance formula for the component counts on two compact domains. This formula is inspired by an analogous covariance formula for `topological events' that appeared in \cite{bmr20}, and is of a similar `interpolation' type to related covariance formulae for Gaussian vectors \cite{pit96,cha08,cha14} and fields \cite{bmr20,mrv23}.

As an application of the covariance formula we prove new results for the variance of the component count. First, assuming correlations are integrable, we prove that the variance has volume-order scaling, and give an expression for the limiting constant (see Theorem \ref{t:Asymptotic}). Second, we give a general upper bound on the variance in terms of the covariance kernel of the field which in many cases improves on best-known results, e.g.\ the important case of monochromatic random waves, and is conjecturally optimal for fields with regularly varying correlations (see Theorem~\ref{t:vub} and Corollary~\ref{c:vf2}).

\subsection{A covariance formula for component counts}
\label{s:cf}

For the remainder of the paper we assume the following general conditions on the field $f$. Let $K(x) = \E[f(0)f(x)]$ be the \textit{covariance kernel} of $f$. Recall that the \textit{spectral measure} of $f$ is the finite measure $\mu$ on $\R^d$ such that $K$ is the Fourier transform of $\mu$. The choice of the normalization in the Fourier transform is unimportant for this paper. We will use the following definition: $K(x) = \mathcal{F}[\mu](x):=\int_{\R^d}e^{\mathrm{i}t\cdot x}\;\mu(dt)$.
\begin{assumption}[General conditions on the field] 
\label{a:gen} $\,$
\begin{enumerate}
\item $f$ is $C^{3}$-smooth almost surely;
\item As $|x| \to \infty$, $\max_{|\alpha| \le 2} \partial^\alpha K(x) \to 0$;
\item The support of the spectral measure $\mu$ contains either (i) an open set, or (ii) a scaled sphere $a \mathbb{S}^{d-1}$ for some $a > 0$.
\end{enumerate}
\end{assumption}

We actually prove our results using only a weaker version of the third item above -- see Remark \ref{r:nondegen} -- however the present statement is convenient and covers all of our examples. The second item of Assumption \ref{a:gen} is used in only one place (in the proof of Lemma \ref{l:numbound}) and could perhaps also be weakened. 

To state the covariance formula we introduce some notation. The formula will apply to the component count on \textit{boxes}, which are sets of the form $B = [a_1, b_1] \times \cdots \times [a_d, b_d] \subset \R^d$, for finite $a_i \le b_i$.  Henceforth we fix a level $\ell \in \R$, two boxes $B_1$ and $B_2$, and $\star,\diamond \in \{\mathrm{ES},\mathrm{LS}\}$.

We first introduce an \text{interpolation} between $f$ and an independent copy $\tilde{f}$
\begin{equation}
    \label{e:int}
 f^t := t f + \sqrt{1 - t^2}  \tilde{f}  \, , \quad t \in [0,1],
 \end{equation}
remarking that $f^t$ is equal in law to $f$ for each $t \in [0, 1]$, and that $f^0 = \tilde{f}$ and $f^1 = f$. We denote by $N^t_\star(D) $ the analogue of $N_\star(D)$ for the field $f^t$.

We next define a notion of `pivotal measures' for the pair of fields $(f,f^t)$; to keep the discussion simple we defer precise definitions to Section \ref{s:vfproof} and instead convey the intuition. 

Consider conditioning the field $f$ to have a critical point at $x \in B_1$ with critical value $\ell$. Then we can classify this critical point as being either `positively pivotal', `negatively pivotal', or `not pivotal', depending on whether a small local positive perturbation of the field $f$ near the point $x$ increases the component count $N_\star(B_1)$ by one, decreases the component count $N_\star(B_1)$ by one, or keeps the component count the same (see Figure~\ref{fig:Pivotal1} for examples). This classification is well-defined for non-degenerate critical points (see Lemma~\ref{l:change}).

\begin{figure}[ht]
    \centering
\begin{tikzpicture}[scale=1.2]
\draw[dashed] (-1,-1) rectangle (1,1);
\node[left] at (-1,0.6) {$B_1$};
\draw[thick,pattern=north west lines,pattern color=gray] plot [smooth cycle,tension=0.5] coordinates {(-0.7,0)(-0.6,.2)(-0.4,0.3)(-0.2,.3)(0,0)(0.2,0.3)(.4,0.3)(0.6,0.2)(0.7,0)(0.6,-0.2)(.4,-0.3)(0.2,-0.3)(0,0)(-0.2,-0.3)(-0.4,-0.3)(-0.6,-0.2)};
\node[below] at (0.7,-1.2) {$\{f\geq\ell\}$};
\draw (0.7,-1.2)--(0.3,0);
\draw[fill] (0,0) circle (1pt);
\draw (0,0)--(-.5,-1.2);
\node[below] at (-0.5,-1.2) {$x$};

\draw[-{Triangle[width=18pt,length=8pt]}, line width=10pt](-1.2,0) -- (-2,0);
\draw[-{Triangle[width=18pt,length=8pt]}, line width=10pt](1.2,0) -- (2,0);

\begin{scope}[shift={(-3.2,0)}]
\draw[dashed] (-1,-1) rectangle (1,1);
\node[left] at (-1,0.6) {$B_1$};
\draw[thick,pattern=north west lines,pattern color=gray] plot [smooth cycle,tension=0.5] coordinates {(-0.7,0)(-0.6,.2)(-0.4,0.3)(-0.2,.3)(-0.1,0)(-0.2,-0.3)(-0.4,-0.3)(-0.6,-0.2)};
\draw[thick,pattern=north west lines,pattern color=gray] plot [smooth cycle,tension=0.5] coordinates {(0.1,0)(0.2,0.3)(.4,0.3)(0.6,0.2)(0.7,0)(0.6,-0.2)(.4,-0.3)(0.2,-0.3)};
\node[below] at (0,-1.2) {$\{f-\delta h\geq\ell\}$};
\draw (0,-1.2)--(-0.3,-0.2);
\draw (0,-1.2)--(0.3,-0.2);
\draw[dashed] (0,0) circle (5pt);
\node[above] at (0,.6) {$W$};
\draw (0,.6)--(0,0.25);
\end{scope}

\begin{scope}[shift={(3.2,0)}]
\draw[dashed] (-1,-1) rectangle (1,1);
\node[left] at (-1,0.6) {$B_1$};
\draw[thick,pattern=north west lines,pattern color=gray] plot [smooth cycle,tension=0.5] coordinates {(-0.7,0)(-0.6,.2)(-0.4,0.3)(-0.2,.3)(0,0.05)(0.2,0.3)(.4,0.3)(0.6,0.2)(0.7,0)(0.6,-0.2)(.4,-0.3)(0.2,-0.3)(0,-0.05)(-0.2,-0.3)(-0.4,-0.3)(-0.6,-0.2)};
\node[below] at (0,-1.2) {$\{f+\delta h\geq\ell\}$};
\draw (0,-1.2)--(0.3,-0.2);
\draw[dashed] (0,0) circle (5pt);
\node[above] at (0,.6) {$W$};
\draw (0,.6)--(0,0.25);
\end{scope}

\begin{scope}[shift={(0,-3.2)}]
\draw[dashed] (-1,-1) rectangle (1,1);
\node[left] at (-1,0.6) {$B_1$};
\draw[thick,pattern=north west lines,pattern color=gray] (0,0) circle (0.5);

\node[below] at (0.7,-1.2) {$\{f\geq\ell\}$};
\draw (0.7,-1.2)--(0.3,0);
\draw[fill] (0,0) circle (1pt);
\draw (0,0)--(-.5,-1.2);
\node[below] at (-0.5,-1.2) {$x$};

\draw[-{Triangle[width=18pt,length=8pt]}, line width=10pt](-1.2,0) -- (-2,0);
\draw[-{Triangle[width=18pt,length=8pt]}, line width=10pt](1.2,0) -- (2,0);

\begin{scope}[shift={(-3.2,0)}]
\draw[dashed] (-1,-1) rectangle (1,1);
\node[left] at (-1,0.6) {$B_1$};
\draw[thick,pattern=north west lines,pattern color=gray] (0,0) circle (0.5);
\draw[thick,fill=white] (0,0) circle (2pt);
\node[below] at (0,-1.2) {$\{f-\delta h\geq\ell\}$};
\draw (0,-1.2)--(0,-0.35);
\draw[dashed] (0,0) circle (5pt);
\node[above] at (0,.6) {$W$};
\draw (0,.6)--(0,0.25);
\end{scope}

\begin{scope}[shift={(3.2,0)}]
\draw[dashed] (-1,-1) rectangle (1,1);
\node[left] at (-1,0.6) {$B_1$};
\draw[thick,pattern=north west lines,pattern color=gray] (0,0) circle (0.5);
\node[below] at (0,-1.2) {$\{f+\delta h\geq\ell\}$};
\draw (0,-1.2)--(0,-0.35);
\draw[dashed] (0,0) circle (5pt);
\node[above] at (0,.6) {$W$};
\draw (0,.6)--(0,0.25);
\end{scope}
\end{scope}

\begin{scope}[shift={(0,-6.4)}]
\draw[dashed] (-1,-1) rectangle (1,1);
\node[left] at (-1,0.6) {$B_1$};
\draw[thick,pattern=north west lines,pattern color=gray] plot [smooth cycle,tension=0.5] coordinates {(-1,0)(-0.9,.2)(-0.6,0.4)(-0.3,.4)(-0.1,0)(-0.3,-0.4)(-0.6,-0.4)(-0.9,-0.2)};

\node[below] at (0.7,-1.2) {$\{f\geq\ell\}$};
\draw (0.7,-1.2)--(-0.2,0);
\draw[fill] (-1,0) circle (1pt);
\draw (-1,0)--(-.5,-1.2);
\node[below] at (-0.5,-1.2) {$x$};

\draw[-{Triangle[width=18pt,length=8pt]}, line width=10pt](-1.2,0) -- (-2,0);
\draw[-{Triangle[width=18pt,length=8pt]}, line width=10pt](1.2,0) -- (2,0);

\begin{scope}[shift={(-3.2,0)}]
\draw[dashed] (-1,-1) rectangle (1,1);
\node[left] at (-1,0.6) {$B_1$};
\draw[thick,pattern=north west lines,pattern color=gray] plot [smooth cycle,tension=0.5] coordinates {(-0.9,0)(-0.9,.2)(-0.6,0.4)(-0.3,.4)(-0.1,0)(-0.3,-0.4)(-0.6,-0.4)(-0.9,-0.2)};
\draw[dashed] (-1,0.3) arc (90:-90:0.3);

\node[below] at (0,-1.2) {$\{f-\delta h\geq\ell\}$};
\draw (0,-1.2)--(-0.5,-0.3);
\node[above] at (-0.6,.6) {$W$};
\draw (-0.6,.6)--(-0.92,0.35);
\end{scope}

\begin{scope}[shift={(3.2,0)}]
\draw[dashed] (-1,-1) rectangle (1,1);
\node[left] at (-1,0.6) {$B_1$};
\begin{scope}
\clip (-1,-1) rectangle (1,1);
\draw[thick,pattern=north west lines,pattern color=gray] plot [smooth cycle,tension=0.5] coordinates {(-1.1,0)(-0.9,.2)(-0.6,0.4)(-0.3,.4)(-0.1,0)(-0.3,-0.4)(-0.6,-0.4)(-0.9,-0.2)};    
\end{scope}

\draw[dashed] (-1,0.3) arc (90:-90:0.3);
\node[below] at (0,-1.2) {$\{f-\delta h\geq\ell\}$};
\draw (0,-1.2)--(-0.5,-0.3);
\node[above] at (-0.6,.6) {$W$};
\draw (-0.6,.6)--(-0.92,0.35);
\end{scope}
\end{scope}

\end{tikzpicture}
\caption{Top panel: An illustration of a critical point which is `negatively pivotal' for both the excursion and level set component counts at level $\ell$. We consider a small perturbation $\pm\delta h$ of the excursion set on a neighbourhood $W$ of the critical point $x$, where $h$ is a local positive perturbation (precise definitions will be given in Section~\ref{s:vub}). In this case both excursion and level component counts decrease by one as we pass from the negative perturbation (left) to the positive perturbation (right). Middle panel: A critical point which is `negatively pivotal' for the level set and `not pivotal' for the excursion set. Bottom panel: A critical point of a boundary stratum which is `negatively pivotal' for both the level set and the excursion set. Note that other configurations are also possible.}
\label{fig:Pivotal1}
\end{figure}
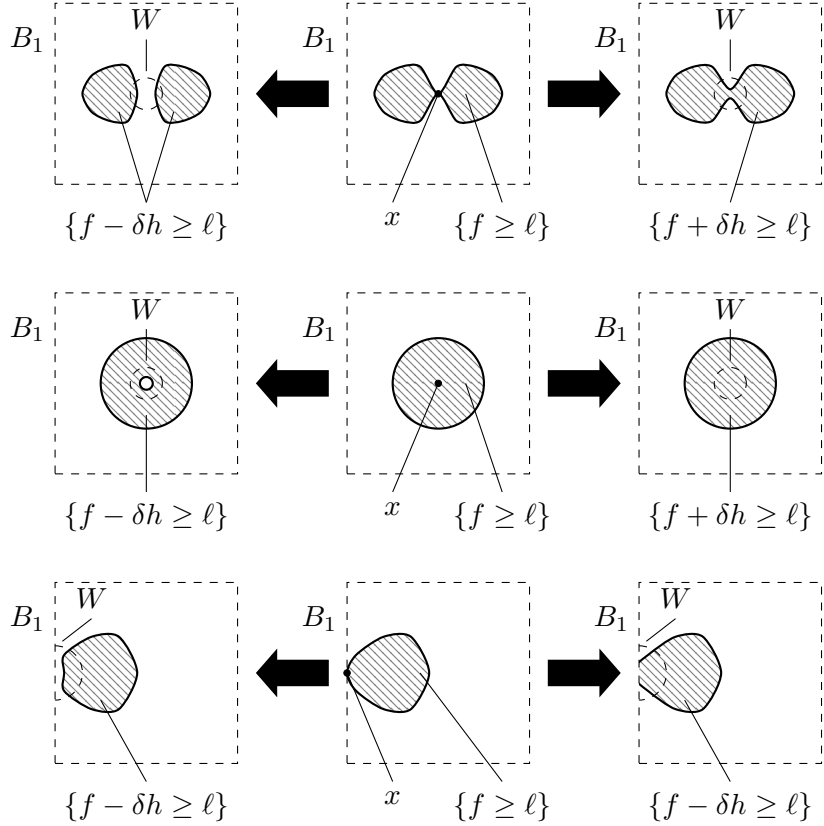

In Sections~\ref{ss:Pivotal}-\ref{ss:CriticalPoint} we will define the `two-point density function' $p_t(x,y)$, which can intuitively be thought of as the density of non-degenerate critical points of $(f,f^t)$ with critical value $(\ell,\ell)$. This function has a similar form to densities that appear in the Kac-Rice formula for the expected number of critical points of a field (see \cite[Chapter~11]{at07}) but there is no exact correspondence because the expected number of critical points of $(f,f^t)$ with critical value $(\ell,\ell)$ is zero. Let $p^{+ +}_t(x,y)$ be the fraction of this density which corresponds to pairs of critical points which are positively pivotal for $N_\star(B_1,\ell)$ and $N_\diamond(B_2,\ell)$ respectively, and define $p^{\pm \pm}_t(x,y)$ similarly. Although these densities depend on $B_1,B_2\subset\R^d$ and $\star,\diamond \in \{\mathrm{ES},\mathrm{LS}\}$ (as well as $f$ and $\ell$), to ease notation we drop these dependencies. We also note that, by definition,
\begin{equation}
\label{e:trivialub}
\sum_{i,j \in \{+,-\}} p^{ij}_t \le p_t .
\end{equation}

Strictly speaking, in the definition of $p_t(x,y)$ and  $p^{\pm \pm}_t(x,y)$ we also consider critical points of $(f,f^t)$ restricted to substrata of the boxes $B_1,B_2$ (i.e.\ boundary faces of dimension $0 \le m \le d-1$), so $p_t(x,y)$ and $p^{\pm \pm}_t(x,y)$ are actually measures which have absolutely continuous components with respect to (pairwise products of) the Lebesgue measure on all (pairs of) strata. (See Definition~\ref{d:PivotalMeasures} for a precise formulation of these statements.) Hence when we integrate $p^{\pm \pm}_t(x,y)$ over the domain $B_1\times B_2$, we will denote it as $d p^{\pm \pm}_t(x,y)$ to keep in mind that we integrate over Lebesgue measures on all strata.

\begin{theorem}[Covariance formula for component counts]
\label{t:cf}
Suppose Assumption \ref{a:gen} holds and let $\ell \in \R$, $B_1,B_2\subset\R^d $ be boxes, and $\star,\diamond \in \{\mathrm{ES},\mathrm{LS}\}$. Then
\begin{align*}  \Cov[N_\star(B_1),N_\diamond(B_2)]=  \int_0^1 \int_{B_1\times B_2}  \!  \! \! K(x-y) \Big( d  p_t^{++}(x,y) + d p_t^{--}(x,y) - dp_t^{+-}(x,y) -d  p_t^{-+}(x,y)  \Big)  \, dt .
\end{align*}
\end{theorem}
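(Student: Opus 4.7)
The approach is via Gaussian interpolation. Set $\psi(t) := \E[N_\star(B_1) \, N^t_\diamond(B_2)]$. Since $f^0 = \tilde f$ is independent of $f$, while $f^t$ has the same law as $f$ for every $t$, we have $\psi(0) = \E[N_\star(B_1)] \, \E[N_\diamond(B_2)]$ and $\psi(1) = \E[N_\star(B_1) \, N_\diamond(B_2)]$, whence
\[
\Cov[N_\star(B_1), N_\diamond(B_2)] = \int_0^1 \psi'(t) \, dt.
\]
The joint field $(f(x), f^t(y))$ is Gaussian with cross-covariance $\Cov(f(x), f^t(y)) = t \, K(x-y)$, whose $t$-derivative is exactly $K(x-y)$. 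The goal is therefore to compute $\psi'(t)$ and identify it with the inner integral in the theorem.

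For smooth functionals $F, G$ on field space, the classical Gaussian interpolation identity (in the spirit of \cite{pit96,cha14,bmr20,mrv20}) yields
\[
\frac{d}{dt} \E[F(f) G(f^t)] = \int_{\R^d \times \R^d} K(x-y) \, \E\Big[ \tfrac{\delta F(f)}{\delta f(x)} \, \tfrac{\delta G(f^t)}{\delta f^t(y)} \Big] \, dx \, dy.
\]
Specialising to $F = N_\star(B_1)$ and $G = N^t_\diamond(B_2)$, neither functional is classically differentiable, but the functional derivative admits a natural distributional interpretation supported on pivotal critical points: a small positive local perturbation of $f$ near a positively pivotal critical point at level $\ell$ increases $N_\star(B_1)$ by one, a perturbation near a negatively pivotal critical point decreases it by one, and there is no effect elsewhere (see Figure~\ref{fig:Pivotal1}). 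Thus $\delta N_\star(B_1)/\delta f(x)$ acts as a signed counting measure on pivotal critical points of $f$ at level $\ell$. Substituting into the interpolation identity and decomposing the expectation via the Kac-Rice two-point density for pairs of non-degenerate critical points of $(f, f^t)$ at level $\ell$ produces exactly the four terms $\pm \, dp_t^{\pm\pm}(x,y)$ with the signs given in the statement.

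The main obstacle is making the functional-derivative manipulation rigorous for the non-smooth component count. My plan is to introduce smoothed versions $N^\eps_\star$ (for instance, by replacing each $\pm 1$ pivotal jump by a smooth step of width $\eps$ in a suitable parameter driving a local perturbation), apply the classical Gaussian interpolation identity to the resulting $C^1$ approximation, and then pass to the limit $\eps \to 0$. The three supporting ingredients are: (i) the two-point Kac-Rice formula for $(f, f^t)$, whose applicability rests on non-degeneracy of the joint distribution of the fields and their first two derivatives, guaranteed by Assumption~\ref{a:gen}(3) (cf.\ Remark~\ref{r:nondegen}); (ii) uniform integrability in $t \in [0,1]$ of the smoothed pivotal expressions, controlled through moment bounds on the number of critical points via Lemma~\ref{l:numbound}; and (iii) a careful treatment of critical points on the boundary strata of $B_1$ and $B_2$, since components may appear or disappear at points on a face of a box as well as in its interior—this is precisely why the pivotal measures $dp_t^{\pm\pm}$ carry absolutely continuous pieces on each substratum of dimension $0,1,\ldots,d-1$ and why the integral is written with the differential notation $d p_t^{\pm\pm}$ rather than a single $dx\,dy$.
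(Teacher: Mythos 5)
Your high-level strategy—interpolate in $t$, differentiate, and identify the derivative with a two-point Kac--Rice integral over pivotal critical points—captures the right picture and the right signs, and correctly locates the technical difficulties (nonsmoothness of the count, boundary strata, integrability in $t$). However, the paper's proof takes a different and essentially more efficient route that your sketch does not reproduce, and in bridging the gap you would have to re-derive a substantial result that the paper simply invokes.

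The paper does \emph{not} mollify $N_\star$ directly. Instead it writes $\min\{N_\star(B_1),\tau\} = \sum_{k=1}^{\tau} \id_{N_\star(B_1)\ge k}$ and applies an already-established covariance formula for indicator functions of \emph{topological events} (\cite[Theorem~2.14]{bmr20}, quoted as Proposition~\ref{p:cf}) to each pair $(\id_{N\ge k_1}, \id_{N^t\ge k_2})$. The pivotal measures $dp^{\pm\pm}_t$ then arise by summing the pivotal intensities for the tail events over all $k_1,k_2$. This is a genuinely different decomposition from your plan of ``replacing each $\pm 1$ pivotal jump by a smooth step of width $\eps$'': the smoothing you propose is, in effect, the internal content of the proof of \cite[Theorem~2.14]{bmr20}, so your proposal amounts to re-proving that theorem rather than using it. That is not wrong in principle, but as written it is a program and not a proof—you never specify the mollification $N^\eps_\star$ in a way that makes $\E[N^\eps_\star(B_1)\,N^{\eps,t}_\diamond(B_2)]$ demonstrably $C^1$ in $t$, nor do you show the $\eps\to 0$ limit commutes with the $t$-derivative.

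Two further gaps are substantive. First, you cite Lemma~\ref{l:numbound} for uniform integrability in $t$; that lemma only gives a lower bound on a determinant of a covariance matrix and is not by itself the needed estimate. The correct input is Proposition~\ref{p:qcb}, which couples Lemma~\ref{l:numbound} with Lemma~\ref{l:denombound} and then performs the integral in $t$ over $[0,1)$ and in $x-y$ near the diagonal to show that $\int_0^1\!\int dp_t(x,y)\,dt<\infty$; the intensity itself blows up as $t\uparrow 1$ and $x\to y$ simultaneously, so boundedness fails and only the integral is controlled. Second, and most critically, you never address the endpoint $t=1$. Proposition~\ref{p:cf} (and hence the interpolation identity) is only valid for $t\in[0,1)$, so one must justify $\lim_{t\uparrow1}\Cov[N,N^t]=\Cov[N,N^1]$ together with the passage to $\tau\to\infty$. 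The paper handles this with a Moore--Osgood interchange of the iterated limits in $(t,\tau)$, proving uniform Cauchy-ness in $\tau$ via the integrability from Proposition~\ref{p:qcb} and pointwise convergence $N^t\to N^1$ via the stability Lemma~\ref{l:qual_stab} (which in turn uses Bulinskaya and Lemma~\ref{l:Topological_stability}). Without some version of this argument, the formula in the theorem statement, which integrates up to $t=1$, is not established even assuming all the previous pieces.
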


As mentioned, Theorem~\ref{t:cf} is inspired by an analogous covariance formula for `topological events' that appeared in \cite{bmr20}, and in fact we deduce Theorem~\ref{t:cf} as a consequence of that formula. In turn, these two formulae can be viewed as generalisations of classical interpolation formulae for finite Gaussian vectors \cite{pit96,bgh01}, and are also related to semigroup interpolation methods developed more recently \cite{led95,cha08,cha14}. See also \cite{tan15,mrv23,tv20} for other recent uses of similar interpolation formulae. For comparison let us state the interpolation formula in the classical case of smooth functions of finite Gaussian vectors.

\begin{proposition}[Classical Gaussian interpolation formula]
\label{p:gcf}
Let $X$ be a centred Gaussian vector with covariance matrix $(K(i,j))_{1 \le i,j \le n}$. Let $g_1,g_2 : \R^n \to \R$ be absolutely continuous functions such that $\E[g_i(X)^2]$ and  $\E[ \| \nabla g_i(X) \|_2^2 ]$ are finite for $i = 1,2$. Then, for every $t \in [0,1]$,
\[ \Cov[g_1(X), g_2(X^t)] = \int_0^t   \sum_{i,j} K(i,j) \E \Big[  \frac{\partial g_1(X)}{\partial X_i} \frac{\partial g_2(X^s)}{\partial X^s_j}   \Big] \, ds ,   \]
where
\[X^t = t X + \sqrt{1-t^2} \widetilde{X}   \ , \quad t \in [0,1] ,  \]
and $\widetilde{X}$ is an independent copy of $X$.
\end{proposition}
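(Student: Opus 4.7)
The plan is to reduce the identity to a differentiation-under-the-expectation computation. Define $\phi(s) := \E[g_1(X)\, g_2(X^s)]$ for $s \in [0,1]$. Since $X^0 = \widetilde{X}$ is independent of $X$ and equal in law to it, $\phi(0) = \E[g_1(X)]\,\E[g_2(X)]$, and since $X^t \stackrel{d}{=} X$ one also has $\E[g_2(X^t)] = \E[g_2(X)]$. Hence
\[
\Cov[g_1(X), g_2(X^t)] = \phi(t) - \phi(0) = \int_0^t \phi'(s)\,ds
\]
once $\phi$ is shown to be absolutely continuous on $[0,t]$, so the task reduces to identifying $\phi'(s)$ with the claimed integrand.

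First I would assume $g_1, g_2 \in C^\infty$ with derivatives of polynomial growth, postponing the general regularity to a mollification argument at the end. Writing $X^s_j = s X_j + \sqrt{1-s^2}\,\widetilde{X}_j$ and differentiating under the expectation yields, for $s \in [0,1)$,
\[
\phi'(s) = \sum_j \E\!\left[g_1(X)\, \partial_j g_2(X^s) \left(X_j - \tfrac{s}{\sqrt{1-s^2}}\, \widetilde{X}_j\right)\right].
\]
Next I would apply Gaussian integration by parts (Stein's formula) separately to $X_j$ and $\widetilde{X}_j$, treating $X$ and $\widetilde{X}$ as independent centred Gaussian vectors each with covariance $K$. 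Setting $H := g_1(X)\,\partial_j g_2(X^s)$, Stein yields
\[
\E[X_j H] = \sum_k K(j,k)\, \E[\partial_{X_k} H], \qquad \E[\widetilde{X}_j H] = \sum_k K(j,k)\, \E[\partial_{\widetilde{X}_k} H],
\]
and the chain rule gives $\partial_{X_k} H = \partial_k g_1(X)\, \partial_j g_2(X^s) + s\, g_1(X)\, \partial^2_{jk} g_2(X^s)$ and $\partial_{\widetilde{X}_k} H = \sqrt{1-s^2}\, g_1(X)\, \partial^2_{jk} g_2(X^s)$. The key cancellation is that the second-derivative contributions enter with net coefficient $s - \tfrac{s}{\sqrt{1-s^2}} \cdot \sqrt{1-s^2} = 0$, so only the first-derivative term survives, yielding
\[
\phi'(s) = \sum_{i,j} K(i,j)\, \E[\partial_i g_1(X)\, \partial_j g_2(X^s)]
\]
after relabelling and using the symmetry of $K$. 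The resulting integrand is well-defined and continuous up to $s=1$, so the identity extends to $t=1$.

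To obtain the identity under the stated regularity hypotheses I would mollify: set $g_i^\eps := g_i * \rho_\eps$ for a standard compactly supported smooth kernel $\rho_\eps$, apply the smooth-case formula to $g_i^\eps$, and pass to the limit $\eps \to 0$. The finiteness of $\E[g_i(X)^2]$ and $\E[\|\nabla g_i(X)\|_2^2]$ together with Cauchy--Schwarz provide the domination required to justify both this limit and the interchange of derivative and expectation. In my view the main technical obstacle is precisely this approximation step together with controlling the apparent $(1-s^2)^{-1/2}$ singularity at $s=1$ before cancellation; the algebraic vanishing of the $\partial^2 g_2$ terms is the essential content of the formula, and the rest is routine once the integrability bounds are deployed.
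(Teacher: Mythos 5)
Your proof is correct and reproduces the standard interpolation argument — differentiate $\phi(s)=\E[g_1(X)g_2(X^s)]$ and apply Gaussian integration by parts in both $X$ and $\widetilde X$, after which the $\partial^2 g_2$ contributions cancel exactly — which is precisely the argument behind the cited \cite[Lemma 3.4]{cha08} that the paper invokes without reproof. The only place deserving slightly more care is the mollification against a Gaussian law rather than Lebesgue measure (the paper's own regularisation in the proof of Proposition \ref{p:mon} uses a Gaussian kernel for exactly this reason), but this is a routine technicality and does not affect the substance of the argument.
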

\begin{proof}
The proof in the case $g_1 = g_2$ can be found in \cite[Lemma 3.4]{cha08} (note the different parameterisation of the interpolation), and the proof in the general case is identical.
\end{proof}

Compared to the formula in Proposition \ref{p:gcf}, the covariance formula in Theorem~\ref{t:cf} has two main differences: (i) it concerns functions of a continuous Gaussian field instead of a finite Gaussian vector, and (ii) it concerns integer-valued (and so non-differentiable) functions. Heuristically one handles these differences by interpreting the partial derivatives in Proposition~\ref{p:gcf} as `densities' in an appropriate sense.

On the other hand, the covariance formula in Proposition \ref{p:gcf} does not require stationarity, whereas Theorem \ref{t:cf} is restricted to the stationary setting. We conjecture that Theorem~\ref{t:cf} should also be true for non-stationary Gaussian fields; indeed we only use stationarity in one place in the proof -- to obtain the integrability of the pivotal measures, see \eqref{e:qcbweak} -- and we expect this to be true under very general conditions.

\begin{remark}
We also conjecture that the covariance formula in Theorem~\ref{t:cf} should extend to component counts on $B_1$ and $B_2$  taken with respect to \textit{distinct} levels $\ell_1 \neq \ell_2$ respectively. In fact in that case we expect that the pivotal measures are uniformly bounded, so in theory it should be easier to verify the integrability in \eqref{e:qcbweak}. However this would require extra arguments and is not needed for our applications, so we do not consider it in this work.
\end{remark}

\subsection{Applications to variance asymptotics}
\label{s:app}

We next present applications of the covariance formula to the asymptotics of the variance of the component count. For brevity we write $N_\star(R)=N_\star(\Lambda_R)$, where $\Lambda_R=[-R/2,R/2]^d$.

\subsubsection{Known and conjectured results}
\label{s:known}
As previously mentioned, the second-order properties of the component count are expected to depend on the covariance structure of the field, of which there are three main categories:

\begin{enumerate}
\item We say that $f$ is \textit{short-range correlated} if $K \in L^1(\R^d)$; an important example is the \textit{Bargmann-Fock field} with $K(x) = e^{-|x|^2 / 2}$ (see \cite{bg17} for background and motivation). By analogy with known results for other `local' geometric statistics of the excursion/level sets (for instance their volume or Euler characteristic \cite{kl01,el16,an17,mul17,kv18a}), as well as with known results for the number of clusters in classical percolation \cite{cg84,zha01,pen01}, in this case it is expected that the variance of the component count $N_\star(R)$ has volume-order scaling $\asymp R^d$, and satisfies a central limit theorem (CLT).
\item We say that $f$ is \textit{regularly varying with index $\beta \in (0,d)$ and remainder $L$} if $K(x) = |x|^{-\beta} L(|x|)$ where $L$ is a slowly varying function (see \cite{bgt87} for a definition); important examples are the \textit{Cauchy fields} with covariance $K(x) = (1 + |x|^2)^{-\beta/2}$. The expected second-order properties are not clear in this case, although by analogy with the zero count for Gaussian processes \cite{slu94} one might suppose that the variance has super-volume scaling $\asymp R^{2d-\beta} L(R)$ for most levels, with the possibility of a finite number of \textit{anomalous levels} with lower-order variance (although this latter phenomena is not well-understood, see \cite[Section 2.2]{bmm22}). It is unclear whether a CLT holds in this case.
\item We say that $f$ has \textit{oscillating long-range correlations} if it is neither short-range correlated nor positively correlated. An important example is the \textit{monochromatic random wave} for which $K(x) = |x|^{-(d/2-1)} J_{d/2-1}(|x|)$ where $J_n$ is the order-$n$ Bessel function; equivalently $K$ is the Fourier transform of the normalised Lebesgue measure on the sphere $\mathbb{S}^{d-1}$. In $d=2$ this is the \textit{random plane wave} (RPW) with $K(x) = J_0(|x|)$. Again by analogy with known results for `local' statistics \cite{mrw20,npr19}, one might expect that for the monochromatic random wave the variance has super-volume scaling $\asymp R^{d+1}$ for most levels, with the possibility of anomalous levels with lower-order variance. Again it is unclear whether a CLT holds in this case.
\end{enumerate}

The predicted behaviour described above has so far only been fully verified for a restricted class of short-range correlated fields. In particular, it was recently shown in \cite{bmm23} that, for a certain subclass of fields whose correlations decay as
\begin{displaymath}
| K(x) | \leq c(1+\lvert x\rvert)^{-\beta}
\end{displaymath}
for some $\beta>9d$, $N_\star(R)$ satisfies a CLT with volume-order variance scaling. Although this subclass contains important examples such as the Bargmann-Fock field, the assumption on the decay of $K$ is quite strong, and it appears the method in \cite{bmm23} cannot be extended, without significant new ideas, to cover the entire short-range correlated case (see \cite[Remark~3.12]{bmm23}).

Outside of the short-range correlated fields covered by the result in \cite{bmm23}, the existing second-order results are limited to variance bounds which are mostly sub-optimal and apply only to planar fields. We now briefly describe these, starting with lower bounds.

In \cite{ns20} Nazarov and Sodin proved a polynomial lower bound
\[ \mathrm{Var}[N_\star(R)] \geq c R^\eta \]
valid for general planar fields with polynomially decaying correlations (to be more precise, they considered families $(f_n)_{n\geq 1}$ of Gaussian fields defined on the sphere $\mathbb{S}^2$ which converge locally, and only considered the nodal set, but we expect the proof extends, up to boundary effects, to the Euclidean setting we consider here). The exponent $\eta > 0$ was not quantified but is small. In \cite{bmm22} sharper results were proven for planar fields under stronger conditions. More precisely, if $f$ is short-range correlated, and if $\int K(x) dx > 0$ and $\frac{d}{d\ell}\mu_\star(\ell)\neq 0$ where $\mu_\star(\ell)$ is the asymptotic mean number of components per unit volume at level $\ell$, then
\[ \mathrm{Var}[N_\star(R)]\geq cR^2 . \]
Further, in \cite{bmm20a} the condition $\frac{d}{d\ell}\mu_\star(\ell)\neq 0$ was shown to hold for a large range of levels (including the zero level for excursion sets). It was also shown in \cite{bmm22} that if $f$ is regularly varying with index $\beta \in (0,2)$ and remainder $L$ (to be more precise, the condition in \cite{bmm22} was given in terms of the spectral measure, but it is roughly equivalent) then, under the same derivative condition,
\[ \mathrm{Var}[N_\star(R)]\geq cR^{2-\beta} L(R) , \]
and if $f$ is the RPW then, under the same derivative condition but excluding level $\ell = 0$,
\[ \mathrm{Var}[N_\star(R)]\geq cR^3 . \]
Note that all these bounds are expected to be of optimal order whenever they apply.

Turning to upper bounds, it is straightforward to establish that, in all dimensions,
\begin{equation}
	\label{e:ub}
	\mathrm{Var}[N_\star(R)] \le c R^{2d} 
\end{equation}
 using a comparison with critical points. More precisely, since each excursion (resp.\ level) set component contains (resp.\ surrounds) at least one critical point, the component count in a compact domain is bounded by the number of critical points in the domain. Since the latter quantity has a second moment of order $\asymp R^{2d}$ \cite{eli85,ef16}, we deduce \eqref{e:ub}. Note that this bound is only expected to be attained for very degenerate Gaussian fields (see \cite{bmm20,bmm22} for examples). For the nodal level $\ell = 0$ of the random plane wave, Priya \cite{pri20} recently improved the upper bound to $c R^{4 - 1/8}$. This bound is deduced from the exponential concentration of the nodal component count, and the proof does not extend to general levels/fields. While weaker concentration bounds have also been established for $N_\star(R)$ in general \cite{rv19, bmr20}, these do not lead to improved bounds on the variance.

\subsubsection{A formula for the variance: monotonicity and convexity}

We first specialise the covariance formula to the case of the variance, which enjoys additional monotonicity and convexity properties:

\begin{theorem}[Variance formula for the component count]
\label{t:vf}
Suppose Assumption \ref{a:gen} holds and $\ell \in \R$, $\star \in \{\mathrm{ES},\mathrm{LS}\}$, and $R > 0$. Then the map
\begin{equation}
\label{e:covmap}
 t \mapsto \E \big[ N_\star(R)    N^t_\star(R) \big]    \ ,  \quad t \in [0,1] , 
 \end{equation}
is non-decreasing, convex, and absolutely continuous with derivative (almost everywhere)
\begin{equation}
\label{e:der}
     \int_{\Lambda_R \times \Lambda_R}  \! \! \! K(x-y) \Big( d p_t^{++}(x,y) + d p_t^{--}(x,y) - d p_t^{+-}(x,y) - d p_t^{-+}(x,y)  \Big)  ,
  \end{equation}
  where $dp^{\pm \pm}_t$ are the `pivotal measures' as in Theorem \ref{t:cf} in the case $B_1 = B_2 = \Lambda_R$ and $\star = \diamond$. In particular,
\begin{equation}
\label{e:vf}
\Var[N_\star(R)]=\int_0^1 \int_{\Lambda_R \times \Lambda_R}  \!  \! \! K(x-y) \Big( d  p_t^{++}(x,y) + d p_t^{--}(x,y) - dp_t^{+-}(x,y) -d  p_t^{-+}(x,y)  \Big)  \, dt .
\end{equation}
\end{theorem}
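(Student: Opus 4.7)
My plan is the following. The variance formula \eqref{e:vf} will follow immediately from Theorem~\ref{t:cf} applied with $B_1 = B_2 = \Lambda_R$ and $\star = \diamond$, using $\Var[X] = \Cov[X,X]$. The absolute continuity of the map \eqref{e:covmap} and the pointwise derivative identity \eqref{e:der} are not part of the statement of Theorem~\ref{t:cf}, but I will read them off from its proof: as is typical for interpolation-based covariance formulas, that proof naturally establishes the pointwise derivative identity before integrating in $t$, and the $L^1$-integrability of the integrand over $[0,1]$ is exactly the finiteness of $\Var[N_\star(R)]$.

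The genuinely new content beyond Theorem~\ref{t:cf} is the monotonicity and convexity of $\phi(t) := \E[N_\star(R) N^t_\star(R)]$ on $[0,1]$. For these I would use the Wiener chaos decomposition of $N_\star(R)$. First one needs $N_\star(R) \in L^2$, which holds since $N_\star(R) \le N_{\mathrm{crit}}(\Lambda_R)$ (each component contains or surrounds at least one critical point) and $\E[N_{\mathrm{crit}}(\Lambda_R)^2] < \infty$ by the Kac-Rice formula under the non-degeneracy in Assumption~\ref{a:gen}. Accordingly there is an orthogonal expansion
\[ N_\star(R) \ = \ \sum_{n=0}^{\infty} Q_n, \qquad \sum_{n=0}^{\infty} \|Q_n\|_2^2 \ = \ \E\big[N_\star(R)^2\big] \ < \ \infty, \]
where $Q_n$ denotes the projection of $N_\star(R)$ onto the $n$-th Wiener chaos of $f$.

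The key observation will be the Mehler-type identity
\[ \E\big[N^t_\star(R) \,\big|\, f\big] \ = \ \sum_{n=0}^{\infty} t^n \, Q_n \qquad \text{in } L^2, \]
verified by expanding $Q_n(f^t) = Q_n(t f + \sqrt{1 - t^2}\,\tilde{f})$ via multilinearity of the $n$-fold Wiener--It\^o integral: only the pure-$f$ term survives after taking the conditional expectation, contributing $t^n Q_n(f)$. Multiplying by $N_\star(R) = \sum_m Q_m$, taking expectation and using orthogonality of distinct chaos then yields
\[ \phi(t) \ = \ \sum_{n=0}^{\infty} t^n \, \|Q_n\|_2^2, \qquad t \in [0,1], \]
a power series in $t$ with non-negative coefficients and finite sum $\E[N_\star(R)^2]$ at $t = 1$. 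Monotonicity, convexity and continuity on $[0,1]$ are then manifest, and combined with the derivative formula extracted from the proof of Theorem~\ref{t:cf} this will complete the proof. The main technical input is the $L^2$ bound via Kac-Rice, which requires Assumption~\ref{a:gen}; the chaos identity itself is a standard consequence of multilinearity, and no further matching between the two expressions for $\phi'$ is needed since both are derivatives of the same absolutely continuous function $\phi$.
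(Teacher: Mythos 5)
Your proof is correct, and the only genuinely new ingredient in Theorem~\ref{t:vf} beyond Theorem~\ref{t:cf} -- the monotonicity and convexity -- is handled by a method genuinely different from the paper's. The paper establishes these via Proposition~\ref{p:mon}, which rests on Chatterjee's lemma \cite[Lemma 3.5]{cha08} for functions of finite Gaussian vectors, followed by three layers of approximation (truncation in the image, reduction to finite index sets via martingale convergence, and mollification to reach smooth $G$). You instead expand $N_\star(R)$ in Wiener chaos, $N_\star(R)=\sum_n Q_n$, and invoke the Mehler/Ornstein--Uhlenbeck identity $\E[Q_n(f^t)\mid f]=t^n Q_n(f)$ to obtain
\[
\E\big[N_\star(R)\,N^t_\star(R)\big]=\sum_{n\ge 0} t^n\,\|Q_n\|_2^2,
\]
a power series with non-negative coefficients summing to $\E[N_\star(R)^2]<\infty$ at $t=1$. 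This is a cleaner route: monotonicity and convexity on $[0,1]$ are immediate (indeed you get real-analyticity on $(-1,1)$), the argument requires only $N_\star(R)\in L^2$ without any smoothing or finite-dimensional approximation, and it yields Proposition~\ref{p:mon} in full generality as a by-product. The paper's approach, in exchange, is more elementary in that it avoids the chaos machinery and stays within the interpolation framework used throughout. Your treatment of the remaining parts -- reading absolute continuity and the derivative identity \eqref{e:der} off the proof of Theorem~\ref{t:cf} (i.e.\ equation \eqref{e:vfproof1}), and obtaining \eqref{e:vf} as a specialisation of Theorem~\ref{t:cf} -- matches the paper exactly. One small imprecision: the integrand in \eqref{e:der} need not have a sign, so its $L^1$-integrability over $[0,1]$ is not ``exactly the finiteness of $\Var[N_\star(R)]$''; the correct dominating bound is $|K(0)|\,p_t$ combined with \eqref{e:qcbweak}. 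This does not affect your argument, since you read absolute continuity off the proof of Theorem~\ref{t:cf} where that bound is used, but the parenthetical justification as stated is wrong.
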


\begin{remark}
\label{r:cd}
It seems plausible that, in fact, $t \mapsto \E \big[ N_\star(R)N^t_\star(R) \big]$ is continuously differentiable under the conditions of Theorem~\ref{t:vf}. One possible strategy to prove this statement would be to show that the pivotal intensities $p_t^{\pm \pm}(x,y)$ are continuous functions of $(t,x,y) \in [0,1) \times B_1 \times B_2$ when restricted to pairs of substrata. We do not pursue such an argument here as it would not directly impact on our applications.
\end{remark}

\subsubsection{Exact asymptotics in the short-range correlated case}

In the case $K \in L^1(\R^d)$, we can deduce from Theorem \ref{t:vf} the asymptotic behaviour of the variance:

\begin{theorem}\label{t:Asymptotic}
Suppose Assumption \ref{a:gen} holds and let $\star \in \{\mathrm{ES}, \mathrm{LS}\}$ and $\ell \in \R$. If $f$ is short-range correlated (i.e.\ $K\in L^1(\R^d)$) then as $R\to\infty$
\begin{equation}
    \label{e:asymptotic}
\mathrm{Var}[N_\star(R)]= \sigma^2 R^d+o(R^d) ,
\end{equation}
where $\sigma^2 \in [0, \infty)$ is the constant defined as
\begin{equation}
\label{e:sigma}
 \sigma^2 =\int_0^1  \int_{\R^d}K(x)\Big(\hat{I}_t^{++}(0,x) + \hat{I}_t^{--}(0,x) - \hat{I}_t^{+-}(0,x) -\hat{I}_t^{-+}(0,x)  \Big)  \,dx\, dt
\end{equation}
for the `stationary pivotal intensities' $\hat{I}_t^{\pm\pm}$ defined in Section~\ref{ss:Asymptotic}. 
\end{theorem}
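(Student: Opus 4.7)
The plan is to start from the variance formula \eqref{e:vf}, change variables $z = y-x$ in the spatial integral to exploit stationarity of $(f,f^t)$, and pass to the large-$R$ limit using the hypothesis $K \in L^1(\R^d)$. After the substitution and Fubini, the variance reads
\[
\Var[N_\star(R)] \;=\; \int_0^1\!\!\int_{\R^d} K(z) \left( \int_{\Lambda_R \cap (\Lambda_R - z)} \Psi_{R,t}^{(z)}(dx) \right) dz\, dt,
\]
where $\Psi_{R,t}^{(z)}(dx)$ denotes the signed measure $dp_t^{++}(x,x+z) + dp_t^{--}(x,x+z) - dp_t^{+-}(x,x+z) - dp_t^{-+}(x,x+z)$ viewed as a measure in $x$, with an absolutely continuous ``bulk'' part on the interior of $\Lambda_R$ and further absolutely continuous components on all pairs of lower-dimensional boundary strata.

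I would then use stationarity together with a quasi-locality argument to show that, for $x$ deep in the bulk of $\Lambda_R$, the bulk density $\Psi_{R,t}^{(z)}(dx)/dx$ converges pointwise as $R \to \infty$ to a translation-invariant limit $\hat\Phi_t(z) := \hat I_t^{++}(0,z)+\hat I_t^{--}(0,z)-\hat I_t^{+-}(0,z)-\hat I_t^{-+}(0,z)$, the stationary pivotal intensities of Section~\ref{ss:Asymptotic}. Combining this with the observation that boundary strata have total volume $O(R^{d-1})$ (and so contribute at most $O(R^{d-1})$ to the total pivotal mass, by Kac--Rice), one obtains
\[
\int_{\Lambda_R \cap (\Lambda_R - z)} \Psi_{R,t}^{(z)}(dx) \;=\; R^d\, \hat\Phi_t(z) \,+\, o(R^d) .
\]
Dividing by $R^d$ and passing to the limit via dominated convergence---using $K \in L^1(\R^d)$ together with the trivial bound $\sum_{i,j}|dp_t^{ij}| \le dp_t$ from \eqref{e:trivialub} and a uniform (in $x$, $t \in [0,1)$, $z$) bound on the two-point critical-point density $p_t(x,x+z)$ coming from Kac--Rice applied to $(f,f^t)$---then yields \eqref{e:asymptotic} with $\sigma^2$ as in \eqref{e:sigma}. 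Non-negativity and finiteness of $\sigma^2$ follow automatically: it is a limit of variances divided by $R^d$ (hence $\geq 0$), and the integral defining it is absolutely bounded by the dominated-convergence envelope (hence finite).

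The main obstacle will be the stabilisation step, namely establishing the pointwise limit $\lim_{R\to\infty} p_t^{\pm\pm}(x,x+z) = \hat I_t^{\pm\pm}(0,z)$ with enough uniformity to feed into the $z$-integral. Pivotality is inherently a global property---whether the critical point at $x$ belongs to a ``pivotal'' component depends in principle on arbitrarily distant behaviour of $f$---so this requires a quantitative \emph{quasi-locality} statement: in the short-range correlated regime, pivotality of a critical point at $x$ is determined, outside a small-probability event, by the field on a neighbourhood of $x$ of diameter $\ll R$. The natural route combines the decay of $K$ (giving approximate independence of distant field values) with tail estimates on the diameter of the component containing $x$ (ruling out large components with high probability), perhaps adapted from the concentration inequalities in \cite{rv19,bmr20}. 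A secondary technical task is verifying the uniform integrability needed to justify interchanging the $t$-integral, $z$-integral, and $R \to \infty$ limit, which should follow from the same Kac--Rice two-point density bounds that underlie Theorem~\ref{t:cf}.
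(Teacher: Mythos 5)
Your high-level strategy --- exploit stationarity via the change of variables $z = y - x$, replace the box-dependent pivotal intensities by their stationary counterparts $\hat I_t^{\pm\pm}$ in the bulk, dispose of boundary strata at cost $O(R^{d-1})$, and pass to the limit by dominated convergence using $K \in L^1(\R^d)$ --- matches the paper's. You also correctly identify the stabilisation step as the crux. However there are two genuine gaps.

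First, you propose to drive dominated convergence with a ``uniform (in $x$, $t\in[0,1)$, $z$) bound on the two-point critical-point density $p_t(x,x+z)$.'' No such bound holds: as $t\to 1$ and $z\to 0$ simultaneously the conditioning density $\varphi_t(0,z)$ blows up, since Lemma~\ref{l:numbound} gives only $\DC \gtrsim \max\{(1-t)^{1/2},|z|\}^{2d'}(1-t)$, whence $\varphi_t \propto \DC^{-1/2}$ is unbounded near the diagonal. What Proposition~\ref{p:qcb} actually provides is joint integrability of $I_t$ over $(t,z)$ near the diagonal plus a uniform bound away from it, and the distinction matters: the dominated convergence has to be run over the joint $(t,z)$-space with an integrable (not bounded) envelope, and in fact the paper needs the stronger $L^2$-version $\hat I_t^{(2)}$ from Proposition~\ref{p:qcb2} at this point.

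Second, the route you suggest for stabilisation --- quantitative quasi-locality via concentration inequalities or component-diameter tail bounds adapted from \cite{rv19,bmr20} --- is both harder than what is needed and likely demands stronger decay than $K\in L^1$. The paper's argument is soft: Lemma~\ref{l:AsympStab} is a purely deterministic Morse-theoretic statement that the pivotal status of a non-degenerate critical point stabilises for all sufficiently large boxes, and Lemma~\ref{l:asymptotic} shows the conditioned Hessians are almost surely non-degenerate (using only that $K\in L^1$ forces the spectral measure to have a density, hence open support). This yields, for each fixed $(t,z)$, that the stabilisation probability $q_t(0,z,r)\to 0$ as $r\to\infty$ with no rate. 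The quantitative control then comes entirely from the Cauchy--Schwarz decoupling
\[
\big|I_t^{\pm\pm}(x,y) - \hat I_t^{\pm\pm}(x,y)\big| \;\le\; \hat I_t^{(2)}(0,x-y)\, q_t(0,x-y,r)^{1/2},
\]
followed by dominated convergence against the envelope $|K(z)|\,\hat I_t^{(2)}(0,z)$, which is integrable in $(t,z)$ by Proposition~\ref{p:qcb2} and $K\in L^1$. Without this Cauchy--Schwarz step, which your proposal omits, feeding a raw pointwise stabilisation statement into the $z$-integral does not obviously produce a dominating function, and replacing it by quantitative tail estimates is a different and more demanding route.
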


As mentioned in Section~\ref{s:known} above, the asymptotic \eqref{e:asymptotic} was already known for a subset of short-range correlated fields, including the Bargmann-Fock field \cite{bmm23}, although using a very different `martingale' approach. This approach led to an alternative representation for $\sigma^2$ (see \cite[Eq. (3.34)]{bmm23}) which is interesting to compare to \eqref{e:sigma}. However \eqref{e:asymptotic} is new for all short-range fields whose correlations do not decay faster than $|x|^{-9d}$, and the expression \eqref{e:sigma} is new in all cases.

Roughly speaking we deduce Theorem \ref{t:Asymptotic}  from Theorem~\ref{t:vf} by showing that the pivotal measures $dp_t^{\pm\pm}(x,y)$ can be replaced with their stationary counterparts $\hat{I}_t^{\pm \pm}(x,y) dx dy$. More precisely, this involves showing that (i) the contribution from the lower dimensional strata of $\Lambda_R$ are bounded by $O(R^{d-1})$, and (ii) the dependence of the pivotal measures on the domain $\Lambda_R$ is negligible for the bulk of points $x,y\in\Lambda_R$.

The previous result determines the asymptotic growth rate of the variance whenever $\sigma^2 > 0$. A natural question is whether this is always true:

\begin{question}
Is the constant $\sigma^2 \ge 0$ in Theorem \ref{t:Asymptotic} strictly positive in general?
\end{question}

At present we only know how to verify $\sigma^2 > 0$ for a restricted class of fields \cite{bmm22,bmm23}, which includes the Bargmann-Fock field, although we expect it to be true in general.

As mentioned in Remark~\ref{r:cd} above, it may be possible to show that $t \mapsto \E \big[ N_\star(R)    N^t_\star(R) \big]   $ is continuously differentiable. Assuming this, using the convexity in Theorem \ref{t:vf} and an argument similar to that in Theorem~\ref{t:Asymptotic} one would obtain that 
\begin{align}
    \label{e:Asymptotic2}
  \sigma^2  & \ge 
\int_{\R^d} K(x) \Big(\hat{I}_0^{++}(0,x) + \hat{I}_0^{--}(0,x) - \hat{I}_0^{+-}(0,x) -\hat{I}_0^{-+}(0,x)  \Big)    \, dx   \\
 & \nonumber = \big(\hat{I}^+ - \hat{I}^-\big)^2 \int_{\R^d}K(x) \, dx 
  \end{align}
where $\hat{I}^{+} = \hat{I}^{+ +}_0(0,0)^{1/2}$ and $\hat{I}^{-} = \hat{I}^{--}_0(0,0)^{1/2}$ can be interpreted as `one-point stationary pivotal intensities', and in the last step we used the fact that $f$ and $f^0 = \tilde{f}$ are independent.

Such analysis, if carried out, would show that $\sigma^2 > 0$ as long as
\begin{equation}
    \label{e:criteria}
  \int_{\R^d}K(x) \, dx > 0  \qquad \text{and} \qquad   \hat{I}^+ \neq \hat{I}^-.
  \end{equation} 
It is interesting to compare \eqref{e:criteria} to the criteria for $\sigma^2 > 0$ previously established in \cite{bmm22,bmm23}:
\begin{itemize}
    \item The condition $ \int_{\R^d}K(x) \, dx > 0$ was also required in \cite{bmm22,bmm23}, and is quite natural -- it implies that the reproducing kernel Hilbert space of the field contains functions which are approximately constant -- although we do not believe it to be necessary for $\sigma^2 > 0$.
  \item The condition $\hat{I}^+ \neq \hat{I}^-$ was also (implicitly) present in \cite{bmm22}, although was not needed in \cite{bmm23}. In fact, we previously showed in \cite{bmm22} that, for a wide class of planar fields,
\[ \frac{d \mu_\star(\ell)}{d\ell} =  \hat{I}^+ - \hat{I}^-      , \]
so that the condition $\hat{I}^+ \neq \hat{I}^-$ is equivalent to the condition $ \frac{d}{d\ell} \mu_\star(\ell) \neq 0$ mentioned in Section~\ref{s:known} above. Hence if one were to make the above arguments rigorous, it would recover the bound $\mathrm{Var}[N_\star(R)] \ge c R^2$ proven in \cite{bmm22} for planar fields, using a very different method.
\end{itemize}

\subsubsection{General variance bounds}

Our next application of Theorem~\ref{t:vf} is to deduce a general upper bound on the variance that applies outside the short-range correlated case. By bounding the pivotal intensities with the critical point intensities (see \eqref{e:trivialub}) we immediately obtain the following:

\begin{corollary}
\label{c:vf}
Suppose Assumption \ref{a:gen} holds and let $\ell \in \R$, $\star \in \{\mathrm{ES},\mathrm{LS}\}$, and $R > 0$. Then
\begin{equation}
\label{e:cvfub1}
 \Var[N_\star(R)]   \le   \int_0^1  \int_{\Lambda_R \times \Lambda_R}  \! \! |K(x-y)|  d p_t(x,y)   \, dt  . 
 \end{equation}
\end{corollary}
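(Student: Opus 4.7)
The corollary is essentially immediate from the variance formula in Theorem \ref{t:vf} combined with the trivial pointwise bound \eqref{e:trivialub} on the pivotal measures. The plan is therefore just to combine these cleanly.

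First, I would start from the variance identity
\[
\Var[N_\star(R)] = \int_0^1 \int_{\Lambda_R \times \Lambda_R} K(x-y)\bigl( dp_t^{++}(x,y) + dp_t^{--}(x,y) - dp_t^{+-}(x,y) - dp_t^{-+}(x,y) \bigr)\, dt
\]
furnished by Theorem \ref{t:vf}. Since the four pivotal measures $dp_t^{\pm\pm}$ are non-negative measures, the signed combination in the integrand is bounded in absolute value (as a measure, i.e.\ in total variation) by the sum $dp_t^{++} + dp_t^{--} + dp_t^{+-} + dp_t^{-+}$. Pulling the absolute value inside the integral (which is justified once one checks the integrand is absolutely integrable; but this is already implicit in the statement of Theorem \ref{t:vf} and in the inequality \eqref{e:qcbweak} referenced there), I would bound
\[
\Var[N_\star(R)] \le \int_0^1 \int_{\Lambda_R\times\Lambda_R} |K(x-y)|\, \bigl( dp_t^{++} + dp_t^{--} + dp_t^{+-} + dp_t^{-+}\bigr)(x,y)\, dt.
\]

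Finally, I would apply the pointwise bound \eqref{e:trivialub}, namely $\sum_{i,j\in\{+,-\}} p_t^{ij} \le p_t$, to replace the sum of pivotal measures by the two-point critical-point density $dp_t$, yielding the claimed bound \eqref{e:cvfub1}.

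\textbf{Main obstacle.} There is no real obstacle: the content is entirely in Theorem \ref{t:vf} and in the definition-level inequality \eqref{e:trivialub}. The only minor point to be careful about is the sense in which \eqref{e:trivialub} holds (as an inequality of measures restricted to each pair of strata of $B_1\times B_2$), so that the integration against $|K(x-y)|$ preserves the inequality; this is automatic since $|K(x-y)|$ is non-negative and continuous. Hence the corollary follows in one line from Theorem~\ref{t:vf}.
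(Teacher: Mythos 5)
Your proposal is correct and matches the paper's own (implicit) argument: the paper states Corollary~\ref{c:vf} as following immediately from Theorem~\ref{t:vf} and the pointwise bound \eqref{e:trivialub} (equivalently \eqref{e:intbound}), which is exactly the two-step estimate you carry out.
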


In Section \ref{s:dbcp} we prove that the critical point intensities $p_t(x,y)$ are integrable on the diagonal (i.e.\ as $t \to 1$ and $x-y \to 0$), from which we deduce the following upper bound:

\begin{theorem}[Variance upper bound]
\label{t:vub}
Suppose Assumption \ref{a:gen} holds and let $\ell \in \R$ and $\star \in \{\mathrm{ES}, \mathrm{LS}\}$. Then there exists a $c > 0$ such that, for all $R \ge 1$,
\begin{equation}
\label{e:vub}
 \Var[ N_\star(R) ] \le c R^d  \int_{\Lambda_{2R}}  \! \! \widetilde{K}(x) \, dx    , 
 \end{equation}
 where $\widetilde{K}(x) = \sup_{ y : |y-x| \le 1} |K(y)|$.
\end{theorem}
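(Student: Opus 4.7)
The starting point is Corollary~\ref{c:vf}, which reduces the task to showing
\[ \int_0^1 \int_{\Lambda_R \times \Lambda_R} |K(x-y)|\, dp_t(x,y)\, dt \le c R^d \int_{\Lambda_{2R}} \widetilde{K}(z)\, dz. \]
The plan is to combine a unit-scale decomposition of the lag variable $z = x-y$, which replaces $|K(x-y)|$ by $\widetilde{K}$, with uniform estimates on the two-point critical intensity $p_t(x,y)$: an off-diagonal bound uniform in $t\in[0,1)$, and a near-diagonal estimate obtained by integrating in $t$, the latter being the content of Section~\ref{s:dbcp}.

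Concretely, I would fix a collection $\{z_i\}\subset \R^d$ such that the balls $U_i = B(z_i,1)$ cover $\R^d$ with bounded multiplicity. Since $|K(w)| \le \widetilde{K}(z_i)$ for $w \in U_i$, this yields
\[ \int_{\Lambda_R} |K(x-y)|\, dp_t(x,y) \le \sum_i \widetilde{K}(z_i) \int_{\Lambda_R \cap (y+U_i)} dp_t(x,y). \]
The main analytic input is a uniform per-cell intensity bound. For indices with $|z_i|\ge 2$, so that $|x-y|\ge 1$ on the support of integration, the joint covariance of $(f(x),\nabla f(x),\nabla^2 f(x))$ and $(f^t(y),\nabla f^t(y),\nabla^2 f^t(y))$ is uniformly non-degenerate in $t\in[0,1]$ under Assumption~\ref{a:gen}, so a standard Kac--Rice calculation gives
\[ \sup_{t \in [0,1)}\, \sup_{y\in\Lambda_R} \int_{\Lambda_R \cap (y+U_i)} dp_t(x,y) \le C. \]
For the finitely many indices with $|z_i|\le 2$, I instead use the diagonal integrability
\[ \sup_{y\in\Lambda_R} \int_0^1 \int_{\Lambda_R \cap (y+U_i)} dp_t(x,y)\, dt \le C, \]
promised by Section~\ref{s:dbcp}. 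In either regime the $dt$-integrated per-cell intensity is $O(1)$. Integrating in $y \in \Lambda_R$ produces the factor $R^d$, and recognising $\sum_i \widetilde{K}(z_i)$ as a Riemann sum for $\int_{\Lambda_{2R}}\widetilde{K}$ (only $z_i$ at distance $O(1)$ from $\Lambda_{2R}$ can contribute since $x,y\in\Lambda_R$) yields the claim.

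The principal obstacle is the diagonal case. As $t\to 1$ one has $f^t\to f$, and the joint Kac--Rice density for critical points of $(f,f^t)$ at $(x,y)$ develops a singularity along $\{t=1\}\cup\{x=y\}$, because the covariance matrix of $(\nabla f(x), \nabla f^t(y))$ appearing in the denominator of the Kac--Rice formula degenerates in the joint limit. The saving is that the $dt$-integration precisely tempers this blow-up, and quantifying this is the task of Section~\ref{s:dbcp}. A bookkeeping remark: $dp_t$ is in fact a sum of measures on pairs of boundary strata of $\Lambda_R$ of dimensions $0\le m_1,m_2\le d$; the same cover-and-intensity argument, applied to each pair of strata, yields a contribution of order $R^{\max(m_1,m_2)}\int_{\Lambda_{2R}}\widetilde{K}$. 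Only the top-dimensional pair $(d,d)$ contributes at the claimed order, and all lower-strata contributions are absorbed into the constant~$c$.
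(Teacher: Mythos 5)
Your proposal takes essentially the same approach as the paper's: start from Corollary~\ref{c:vf}, split the lag $x-y$ into a near-diagonal regime (where Proposition~\ref{p:qcb}'s $dt$-integrability in~\eqref{e:qcb1} tames the degeneracy as $(t,x-y)\to(1,0)$) and an off-diagonal regime (where~\eqref{e:qcb2} gives a uniform bound on the intensity), and then convert to $\widetilde K$ to handle both the boundary strata and the passage from $|K|$ to an $L^1$-type quantity. The only real difference is cosmetic: you discretise the lag into a unit-ball cover and treat $\sum_i\widetilde K(z_i)$ as a Riemann sum, whereas the paper works directly with the pointwise bound $|K(x-y)|\le\widetilde K(x-y)$, thickens the lower-dimensional strata into the full box $\Lambda_R\times\Lambda_R$, and performs the change of variables $u=x-y\in\Lambda_{2R}$ exactly. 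Your route introduces a small amount of slack that you did not address: passing from the lattice sum back to an integral inflates the sup radius in $\widetilde K$ and enlarges the domain to $\Lambda_{2R+O(1)}$, and the step $\int_{\Lambda_{2R+O(1)}}\widetilde K\lesssim\int_{\Lambda_{2R}}\widetilde K$ is not entirely automatic; the paper's pointwise-plus-change-of-variables route avoids this. Also, the per-strata contribution scales as $R^{\min(m_1,m_2)}$ (integrate the per-cell bound over the smaller face), not $R^{\max(m_1,m_2)}$ — with your exponent the pair $(d,d-1)$ would not be lower order — but since $\min(m_1,m_2)\le d$ with equality only for $(d,d)$, your conclusion is unaffected.
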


\begin{remark}
The proof of Theorem~\ref{t:vub} shows that the constant $c$ could be replaced by ${c'(1 + \ell^{2d})}$ for a $c' > 0$ depending only on the field $f$. The term $ R^d \int_{\Lambda_{2R}}  \! \! \widetilde{K}(x) \, dx$ could also be replaced by $\int_{\Lambda_R \times \Lambda_R} |K(x-y)| dx dy$, where this double integral is understood as being over the box $\Lambda_R$ as well as its boundary faces of all dimensions equipped with their respective Lebesgue measures.
\end{remark}

To illustrate the optimality (or lack thereof) of this result, let us consider some examples:

\begin{corollary}
\label{c:vf2}
Suppose Assumption \ref{a:gen} holds and let $\ell \in \R$ and $\star \in \{\mathrm{ES}, \mathrm{LS}\}$.
\begin{enumerate}
\item If $f$ is regularly varying with index $\beta \in (0, d)$ and remainder $L$, there exists a $c > 0$ such that, for all $R \ge 1$,
\[ \Var[ N_\star(R) ] \le c R^{2d-\beta} L(R) .  \]
\item If $f$ is the monochromatic random wave, there exists a $c > 0$ such that, for all $R \ge 1$,
\[ \Var[ N_\star(R) ] \le c R^{(3d+1)/2} .  \]
\end{enumerate}
\end{corollary}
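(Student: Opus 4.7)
The plan is to apply Theorem~\ref{t:vub} directly, reducing both cases to estimates on $\int_{\Lambda_{2R}} \widetilde{K}(x)\,dx$ via elementary asymptotics of $K$ together with standard properties of slowly varying functions / Bessel functions.

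For part~(1), I would first observe that since $K$ is a bounded continuous covariance kernel with $K(x) = |x|^{-\beta} L(|x|)$ governing its behaviour at infinity, and since $\widetilde K$ only enlarges the evaluation point by a unit ball, one has $\widetilde{K}(x) \le c\,(1+|x|)^{-\beta} L(|x|)$ for all $x \in \R^d$, where for $|x|$ large this uses the uniform convergence property of slowly varying functions ($L(|y|)/L(|x|) \to 1$ uniformly in $|y-x|\le 1$). Splitting the integral into $|x|\le 2$ (bounded by a constant) and $|x|>2$, and passing to polar coordinates, I would get
\[
\int_{\Lambda_{2R}} \widetilde{K}(x)\,dx \;\le\; c + c\int_2^{R\sqrt{d}} r^{d-1-\beta} L(r)\,dr.
\]
Because $\beta < d$, the exponent $d-1-\beta > -1$, so Karamata's theorem yields this integral $\sim c'\,R^{d-\beta} L(R)/(d-\beta)$ as $R\to\infty$. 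Plugging into \eqref{e:vub} gives $\Var[N_\star(R)] \le c R^d \cdot R^{d-\beta} L(R) = c R^{2d-\beta} L(R)$.

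For part~(2), I would use the standard asymptotic expansion of the Bessel function, $J_{d/2-1}(r) = \sqrt{2/(\pi r)}\cos(r - (d-1)\pi/4) + O(r^{-3/2})$, to deduce the decay bound
\[
|K(x)| \;\le\; c\,(1+|x|)^{-(d-1)/2}, \qquad x \in \R^d.
\]
As in part~(1), the same bound (up to constants) holds for $\widetilde{K}$ since the supremum is over a fixed unit neighbourhood. Therefore
\[
\int_{\Lambda_{2R}} \widetilde{K}(x)\,dx \;\le\; c + c\int_1^{R\sqrt{d}} r^{d-1-(d-1)/2}\,dr \;=\; c + c\int_1^{R\sqrt{d}} r^{(d-1)/2}\,dr \;\le\; c'\,R^{(d+1)/2}.
\]
Applying Theorem~\ref{t:vub} again gives $\Var[N_\star(R)] \le c R^d \cdot R^{(d+1)/2} = c R^{(3d+1)/2}$, as claimed.

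Neither case presents a substantive obstacle; the corollary is essentially a bookkeeping exercise once Theorem~\ref{t:vub} is in hand. The only subtlety worth flagging is the passage from the pointwise decay of $K$ to the same decay of $\widetilde{K}$ in part~(1): this requires invoking the uniform convergence theorem for slowly varying functions (see \cite{bgt87}) rather than just using monotonicity, since $L$ itself is not assumed monotone. Verifying that monochromatic waves and regularly varying fields satisfy Assumption~\ref{a:gen} (so that Theorem~\ref{t:vub} applies in the first place) is immediate: the spectral measure of the monochromatic wave is supported on a scaled sphere by definition, and the examples of regularly varying fields in mind (e.g.\ Cauchy fields) have spectral measures containing an open set.
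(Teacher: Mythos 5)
Your proof is correct and follows essentially the same route as the paper: apply Theorem~\ref{t:vub}, then bound $\int_{\Lambda_{2R}}\widetilde{K}(x)\,dx$ via Karamata's theorem in the regularly varying case and via the Bessel decay $K(x)=O(|x|^{-(d-1)/2})$ in the monochromatic case. Your explicit appeal to the uniform convergence theorem for slowly varying functions (to pass from $K$ to $\widetilde K$) spells out a step the paper leaves implicit in writing $\int\widetilde K\sim\int K$, but is otherwise the identical bookkeeping argument.
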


\begin{proof}
In the regularly varying case this follows directly from Theorem \ref{t:vub} and the additional fact that, as $R \to \infty$,
\[  \int_{\Lambda_{2R}} \widetilde{K}(x) dx \le \int_{|x| \le \sqrt{d}R} \widetilde{K}(x) dx \sim \int_{|x| \le \sqrt{d}R} K(x) dx  \sim \frac{1}{d-\beta} (\sqrt{d}R)^{d-\beta}L(R)  \]
by regular variation and Karamata's integral theorem \cite[Theorem 1.5.11]{bgt87}. In the monochromatic case we additionally use that, for $R \ge 1$,
\[  \int_{\Lambda_{2R}} \widetilde{K}(x) dx \le \int_{|x| \le \sqrt{d}R} \widetilde{K}(x) dx = O( R^{(d+1)/2} ) , \]
since $K(x) = |x|^{-(d/2-1)} J_{d/2-1}(|x|) = O( |x|^{-(d-1)/2} )$.
\end{proof}

Comparing with the discussion in the previous section, the bound in Theorem \ref{t:vub} is (conjecturally) of the correct order in the regularly varying case, but not necessarily in the oscillating case due to the fact that we have $\widetilde{K} \approx |K|$ instead of $K$ in the right-hand side of~\eqref{e:vub}. Nevertheless, even in the oscillating case the attained bounds are best known. For instance, for the nodal set of the RPW the attained bound $c R^{7/2}$ improves the previously best-known bound $cR^{4-1/8}$ \cite{pri20} (and moreover is valid at all levels). 

\begin{remark}
\label{r:gub}
As in the discussion around \eqref{e:Asymptotic2}, if one knew the continuous differentiability of $t \mapsto \E \big[ N_\star(R)    N^t_\star(R) \big]   $ (see Remark \ref{r:cd}) one could use the convexity in Theorem~\ref{t:vf} to obtain the alternative upper bound
\begin{equation}\label{e:cvfub2}
\begin{aligned}
 \Var[N_\star(R)]  & \le \lim_{t \to 1} \int_{\Lambda_R \times \Lambda_R} \! \! \! \! K(x-y) \Big( d p_t^{++}(x,y) + d p_t^{--}(x,y) - d p_t^{+-}(x,y) - d p_t^{-+}(x,y)  \Big)  , \\
   & \le  \lim_{t \to 1} \int_{\Lambda_R \times \Lambda_R} \! \! \! \! |K(x-y)|  d p_t(x,y)  .
\end{aligned}
\end{equation}
This is comparable to the \textit{Gaussian Poincar\'{e} inequality}
\[   \Var[ f(X) ] \le \sum_{i,j} K(i,j) \E \Big[ \frac{\partial f(X)}{\partial X_i} \frac{\partial f(X)}{\partial X_j} \Big] \le  \sum_{i,j} |K(i,j)| \Big| \E \Big[ \frac{\partial f(X)}{\partial X_i} \frac{\partial f(X)}{\partial X_j} \Big] \Big|   \]
for Gaussian vectors $X$ and sufficiently smooth $f$ (see, e.g., \cite[Corollary~3.2]{che82}). However it is not clear whether the bound \eqref{e:cvfub2} is effective  in our context, since our analysis in Section~\ref{s:dbcp} does \textit{not} show that $ \int_{\Lambda_R \times \Lambda_R}  d p_t(x,y) $ remains bounded as $t \to 1$ (c.f.\ \eqref{e:cvfub1} where we only require the integrability of $\int_{\Lambda_R \times \Lambda_R}  d p_t(x,y)$ as $t \to 1$).
\end{remark}

\begin{remark}[General lower bounds]
As in the previous remark (see also the discussion around \eqref{e:Asymptotic2}), if one knew the continuous differentiability of $t \mapsto \E \big[ N_\star(R)    N^t_\star(R) \big]   $ one could use the convexity in Theorem \ref{t:vf} to obtain the general lower bound 
\begin{align}
\label{e:cvflb}
  \Var[N_\star(R)] & \ge   \int_{\Lambda_R \times \Lambda_R}  \! \! \! K(x-y) \Big( d p_0^{++}(x,y) + d p_0^{--}(x,y) - d p_0^{+-}(x,y) - d p_0^{-+}(x,y)  \Big) \, dx dy
 \\
  & \nonumber =   \int_{\Lambda_R \times \Lambda_R}  \! \! K(x-y) \big( d p^+(x) - d p^-(x) \big) \big(  d p^+(y) - d p^-(y) \big)  \, dx dy , 
  \end{align}
  where $dp^{\pm}$ are `one-point pivotal measures' defined similarly to $\hat{I}^{\pm}$ (we refrain from giving a formal definition), and the last step uses the independence of $f$ and $f^0 = \tilde{f}$. This is similar in spirit to the bound in \cite[Proposition~3.7]{cac82} for smooth functions of finite Gaussian vectors, which is proven using a very different method.
\end{remark}

\subsection{Outline of the paper}
In Section~\ref{s:vub} we prove the covariance and variance formulae in Theorems~\ref{t:cf} and~\ref{t:vf}, and also deduce Theorems~\ref{t:Asymptotic} and~\ref{t:vub} as a consequence, subject to some auxiliary results, namely the integrability of the pivotal measures and the topological stability of the component count. The integrability of the pivotal measures is proven in Section~\ref{s:dbcp}, and the topological stability is proven in Section \ref{s:top}. Finally in the appendix we collect some non-degeneracy properties of smooth Gaussian fields.

\subsection{Acknowledgements}
The authors are grateful to Hugo Vanneuville for pointing out in \cite{van19} that interpolation methods can be used to study the variance of the component count. We also thank an anonymous referee for detailed references to results in \cite{gm88}, as well as general comments and corrections which improved the presentation of this work.

The second author was supported by the European Research Council (ERC) Advanced Grant QFPROBA (grant number 741487). The third author was supported by the Australian Research Council (ARC) Discovery Early Career Researcher Award DE200101467, and acknowledges the hospitality of the Statistical Laboratory, University of Cambridge, where part of this work was carried out.


\section{An exact formula for the covariance and applications}
\label{s:vub}

In this section we establish the exact formula for the component count covariance stated in Theorem~\ref{t:cf} above, and then deduce the various consequences of this formula. The proof makes use of auxiliary results (integrability of the pivotal measures and topological stability of the component count) which are proven in the following sections.

\subsection{Stability of the component count}
\label{s:stability}
We begin by stating three deterministic topological stability properties of the component count. Roughly speaking the first states that the component count at level $\ell$ cannot change under small perturbations if there are no critical points at level $\ell$. The second states the component count can change by at most one if the function is perturbed near a non-degenerate critical point. The third states that the change in the component count when perturbing near a critical point is the same for all sufficiently large boxes. These properties are all consequences of standard results of stratified Morse theory (see \cite{gm88} for a comprehensive background) as we will explain below. However in the interest of completeness, we present proofs of our results in Section~\ref{s:top}.

To formalise the stability properties we require the notion of stratified sets. A box $B=[a_1,b_1]\times\dots\times[a_d,b_d]$ may be considered as a \emph{stratified set} by partitioning it into the finite collection $\mathcal{F} = (F_i)$ of (the interiors of) its faces of all dimensions $0 \le m \le d$, which we refer to as the \textit{strata}. We let $\mathcal{F}_0 \subset \mathcal{F}$ be the strata of dimension $m = 0$. A \textit{stratified box} will be any such $B$ considered as a stratified set. We equip each stratum $F \in \mathcal{F} \setminus \mathcal{F}_0$ with its Lebesgue measure $dv_F$, and each stratum $F \in \mathcal{F}_0$ with a Delta mass, also denoted~$dv_F$. 

Let $B$ be a stratified box and let $U$ be an open neighbourhood of $B$. For $x \in B$ and $g \in C^1(U)$, $\nabla_F g(x)$ denotes the derivative of $g$ restricted to the unique stratum $F$ containing $x$. A \textit{(stratified) critical point} of $g$ is a point $x \in F$ such that $\nabla_F g(x) = 0$. The \textit{level} of this critical point is the value $g(x)$. The critical point is \textit{non-degenerate} provided that $\det\nabla_F^2 g(x) \neq 0$ and if $x\in \bar{F'}$ for a higher dimensional strata $F'$, then $\nabla_{F'} g(y) \not \to 0$ as $y\to x$, $y\in F'$. We take any point belonging to a stratum of dimension $m=0$ to be a non-degenerate critical point by definition.
 
Extending our previous notation, for a function $g \in C^1(U)$ we let $N_\star(B,g,\ell)$ denote the component count of $g$ at level $\ell$ inside $B$, i.e.\ the number of connected components of $\{g \ge \ell\}$ (if $\star=\mathrm{ES}$) or $\{g = \ell\}$ (if $\star = \mathrm{LS}$) which intersect $B$ but not its boundary. 

The first two stability properties are the following:

\begin{lemma}[Stability away from critical points]\label{l:Topological_stability}
Suppose that $g\in C^1(U)$ has no (stratified) critical points at level $\ell\in\R$. There exists a neighbourhood of $g$ in $C^1(U)$ such that for all $g^\prime$ in the neighbourhood
\begin{displaymath}
N_\star(B,g^\prime,\ell)= N_\star(B,g,\ell).
\end{displaymath}

\end{lemma}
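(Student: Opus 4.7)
The plan is to apply the standard stratified Morse-theoretic strategy in two steps: first show that the hypothesis ``no stratified critical points at level $\ell$'' is $C^1(U)$-open, and then construct a stratum-preserving isotopy of $B$ carrying the level set of $g$ onto that of any nearby $g'$. I would begin by observing that on each stratum $F$ of positive dimension the restricted gradient $\nabla_F g$ extends continuously to $\bar F$ (since $g\in C^1(U)$ and each face of the box is affine), so by compactness and the hypothesis one obtains $c_F,\delta_F>0$ with $|\nabla_F g|\ge c_F$ on $\{|g-\ell|\le\delta_F\}\cap\bar F$. At 0-dimensional strata (vertices), the convention that vertices are always critical, combined with the hypothesis, forces $g(v)\ne\ell$, which extends by continuity to $|g-\ell|\ge\delta_v>0$ on a neighbourhood of $v$. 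Since there are finitely many strata, standard $C^1$-continuity yields a $C^1(U)$-neighbourhood of $g$ on which every $g'$ satisfies the analogous bounds with halved constants, and in particular is free of stratified critical points at level $\ell$.

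Second, I would join $g$ to such a $g'$ by the affine path $g_s=(1-s)g+sg'$ (each $g_s$ lying in the neighbourhood from the first step) and produce a stratum-preserving isotopy $\Phi_s\colon B\to B$ with $\Phi_s(\{g\ge\ell\})=\{g_s\ge\ell\}$ and $\Phi_s(\{g=\ell\})=\{g_s=\ell\}$. One constructs $\Phi_s$ as the flow of a time-dependent vector field $X_s$, defined on a neighbourhood of $\{|g_s-\ell|\le\delta/2\}\cap B$, satisfying $X_s\cdot\nabla_F g_s+\partial_s g_s=0$ on each stratum $F$ and tangent to every face of $B$, extended by zero using a smooth cut-off outside this neighbourhood (where the level sets are already static in $s$). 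The map $\Phi_1$ is then a homeomorphism of $B$ preserving the stratification, hence fixing $\partial B$ setwise, and carries the $g$-excursion (resp.\ level) set onto the $g'$-excursion (resp.\ level) set, so it induces a bijection between components contained in $B$ and gives $N_\star(B,g',\ell)=N_\star(B,g,\ell)$.

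The main obstacle will be constructing $X_s$ tangent to all faces of $B$ simultaneously. The product structure of the box helps here: a codimension-$k$ face has a neighbourhood in $B$ of the form $[0,\eta)^k\times\R^{d-k}$, so ``tangent to every face'' reduces to a vanishing condition on the relevant coordinate hyperplanes. This can be achieved by a partition-of-unity construction gluing the stratum-wise gradient-like fields $-(\partial_s g_s)\nabla_F g_s/|\nabla_F g_s|^2$ defined on each stratum $F$, with cut-offs forcing each local field to vanish on the lower-dimensional faces of $\bar F$. As an alternative, one may invoke Thom's first isotopy lemma applied to the stratified submersion $g_s\colon B\to\R$ near $\{g_s=\ell\}$, which produces the desired stratified isotopy directly without explicit gluing.
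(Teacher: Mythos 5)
Your proposal is correct and follows essentially the same strategy as the paper: first show by compactness that the ``no stratified critical points at level $\ell$'' condition is $C^1(U)$-open, and then join $g$ to $g'$ by a path and invoke a stratified-isotopy argument (the paper packages this second step into its Lemma~\ref{l: morse continuity}, proved via Thom's first isotopy lemma, which you also mention as an alternative to the explicit vector-field gluing).
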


\begin{lemma}[Change near critical points]
\label{l:change}
Suppose $g \in C^2(U)$ has a non-degenerate stratified critical point at $x$ at level~$\ell$. Then there exists a unique pair $(k, k') \in \N_0 \times \N_0$, with $|k-k'| \le 1$, satisfying the following: for any sufficiently small open neighbourhood $W$ of $x$, any function $h \in C_c^2(W)$ such that $h(x)>0$, and every sufficiently small $\delta > 0$,
\[ N_\star(B, g - \delta h, \ell) = k \quad \text{and} \quad  N_\star(B, g + \delta h, \ell) = k' . \]
\end{lemma}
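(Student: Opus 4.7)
My plan is to localise the analysis near $x$ via the stratified Morse lemma, and then deduce both the uniqueness of $(k,k')$ and the bound $|k-k'|\le 1$ by interpolation arguments that reduce to Lemma~\ref{l:Topological_stability} away from a single critical value.

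\textbf{Step 1 (Setting up $W$).} Using the stratified Morse lemma at the non-degenerate critical point $x$ of $g$ on the stratum $F$, choose $W$ small enough that (i) $x$ is the only critical point of $g$ in $\overline{W}$; (ii) $|g-\ell| \ge c > 0$ on $\partial W$ for some $c$; and (iii) $\overline{W}$ only meets strata whose closure contains $F$. For $h \in C_c^2(W)$ with $h(x) > 0$ and $\delta > 0$ sufficiently small (depending on $h$), the implicit function theorem applied at $x$ (using the non-degenerate Hessian of $g|_F$) yields a unique critical point $y^{\pm}_\delta \in W$ of $g \pm \delta h$, satisfying $y^\pm_\delta \to x$ as $\delta \to 0$ with critical value $\ell \mp \delta h(x) + O(\delta^2)$; outside $W$, $g \pm \delta h = g$ has no critical points at level $\ell$. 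Thus neither $g - \delta h$ nor $g + \delta h$ has any critical point at level $\ell$ in $B$.

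\textbf{Step 2 (Uniqueness of $(k,k')$).} Given two admissible pairs $(h_1, \delta_1)$ and $(h_2, \delta_2)$, consider $g_s := g - ((1-s)\delta_1 h_1 + s\delta_2 h_2)$ for $s \in [0, 1]$. Since $(1-s)\delta_1 h_1(x) + s\delta_2 h_2(x) > 0$ for all $s$, Step~1 applies uniformly along the path, so no $g_s$ has a critical point at level $\ell$. Continuity of $s \mapsto g_s$ in $C^1(U)$ together with Lemma~\ref{l:Topological_stability} and compactness of $[0,1]$ implies $s \mapsto N_\star(B, g_s, \ell)$ is constant, giving uniqueness of $k$; the argument for $k'$ is identical.

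\textbf{Step 3 (Bound $|k-k'| \le 1$ and main obstacle).} For the difference bound, interpolate via $g_s := g + (2s-1)\delta h$, $s \in [0,1]$, which connects $g - \delta h$ at $s = 0$ to $g + \delta h$ at $s = 1$. The unique critical point $y_s \in W$ has critical value $v(s)$ whose derivative, using the critical-point identity $\nabla g(y_s) = -(2s-1)\delta \nabla h(y_s)$, simplifies to $v'(s) = 2\delta h(y_s) > 0$ (since $h(y_s) > 0$ for $\delta$ small, as $y_s$ stays close to $x$). Hence $v$ strictly increases from $\ell - \delta h(x) + O(\delta^2)$ to $\ell + \delta h(x) + O(\delta^2)$ and crosses $\ell$ at a unique $s^* \in (0,1)$. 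By Lemma~\ref{l:Topological_stability}, $N_\star(B, g_s, \ell)$ equals $k$ on $[0, s^*)$ and $k'$ on $(s^*, 1]$, so $|k - k'|$ is exactly the jump at $s^*$. The main obstacle, and the technical heart of the argument, is to show this jump is at most $1$: for an interior Morse critical point this is classical (a single handle attachment alters the component count by at most one), but for a critical point on a boundary stratum of positive codimension one must invoke Goresky--MacPherson stratified Morse theory, decomposing the Morse datum into tangential and normal parts and checking the handle-attachment description of the superlevel (or level) set still yields a component-count change of at most one, while carefully accounting for possible interactions with $\partial B$ (since components are only counted when they do not meet $\partial B$).
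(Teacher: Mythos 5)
Your proposal reduces the statement to the right underlying claim, but it does not actually prove the crux. You correctly isolate a single crossing $s^*$ at which the critical value of $g_s = g + (2s-1)\delta h$ passes through $\ell$, and you correctly note that everything hinges on showing the component count jumps by at most one at that crossing. But you then write that this is "the technical heart of the argument" requiring one to "invoke Goresky--MacPherson stratified Morse theory\ldots and check" that the handle attachment changes the component count by at most one --- and you stop there. That check is precisely what the lemma is asking you to prove, and leaving it as an invocation of a general theory (whose handle-attachment statement is stated for closed stratified spaces, not for counting components that are required to avoid $\partial B$) is a genuine gap rather than a deferral.

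The paper does not route through stratified Morse theory at all, and the comparison is instructive. For a critical point in the interior of $B$, it observes that $\{g + th \geq \ell\} \cap V = \{G \geq t\} \cap V$ for $G := (g-\ell)/h$; $G$ has a non-degenerate Morse critical point at $x$ of some index $i$, so the ordinary Morse lemma gives the quadratic normal form, and an explicit case analysis over $i \in \{0, 1, \dots, n\}$ (ball appears/disappears; one-sheet hyperboloid becomes two-sheet; higher-index hyperboloids stay connected) shows the excursion and level component counts each change by $0$ or $\pm 1$, independently of $h$. For a critical point on a boundary stratum, the paper avoids any normal form at all: it shows, via the isotopy Lemma~\ref{l: morse continuity} applied inside a small box $B'$ around $x$ (where $g$, now viewed on the stratification of $B'$, has \emph{no} stratified critical points at level $\ell$, because $x$ lies in the interior of $B'$ and its normal derivative is non-zero by non-degeneracy), that the level set topology in $B'$ is unchanged during the perturbation; the only thing that changes is whether the set $\{g \pm \delta h \geq \ell\}$ contains $x$, which alters the \emph{interior} component count by exactly zero or one. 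Your proof needs to supply an argument of comparable concreteness for the boundary stratum case; as written it doesn't. (Minor additional remarks: your Step 1 has a sign error --- the critical value of $g + \delta h$ near $x$ is $\ell + \delta h(x) + O(\delta^2)$, not $\ell - \delta h(x) + O(\delta^2)$, though you have the signs right again in Step 3; and your interpolation argument for uniqueness of $(k,k')$ in Step 2 is a valid and arguably cleaner alternative to the paper's implicit uniqueness, which follows because the Morse normal form, resp.\ the boundary dichotomy, does not depend on $h$.)
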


Lemma~\ref{l:Topological_stability} follows quite directly from Thom's first isotopy lemma, as will become apparent from our proof in Section~\ref{s:top}. More general forms of this result are proven in Part I, Chapter 4 and Chapter 7.4 of \cite{gm88}. Lemma~\ref{l:change} follows from the very general `Main Theorem' (Theorem~3.5.4) of \cite{gm88}, although again we will give a proof in our particular setting in Section~\ref{s:top}.

In particular, the second stability property allows us to classify all non-degenerate critical points depending on whether the component count increases by one, decreases by one, or stays the same, after perturbation. More precisely, define the index sets
\[  P^+ =   \{ (k, k') \in \N_0 \times \N_0 : k' - k = 1 \} \ , \quad   P^- =   \{ (k, k') \in \N_0 \times \N_0 : k' - k = -1 \}  , \]
\[ P = P^+ \cup P^- \quad \text{and} \quad P^0 = \{ (k, k') \in \N_0 \times \N_0 : k' - k = 0 \}. \]  
We say that a non-degenerate critical point is \textit{positively pivotal} (for the component count) if $(k,k') \in P^+$, \textit{negatively pivotal} if $(k,k') \in P^-$, and \textit{not pivotal} if $(k,k') \in P^0$, where $(k,k')$ is the pair guaranteed to exist by Lemma \ref{l:change}. Note that these definitions depend on the box $B$. When we wish to emphasise this we will say that the critical point is $(\Delta,B)$-pivotal for $\Delta=+,-,0$ respectively.

Finally we state our third stability property, which also follows as a consequence of the `Main Theorem' of \cite{gm88}:

\begin{lemma}[Asymptotic topological stabilisation]\label{l:AsympStab}
Suppose $g \in C^2(\R^d)$ has a non-degenerate critical point at $x$ at level $\ell$. Then there exists $r>0$ and $\Delta \in\{+,-,0\}$ such that, for all boxes $B$ containing $x+\Lambda_r$, the critical point $x$ is $(\Delta,B)$-pivotal.
\end{lemma}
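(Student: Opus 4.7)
The strategy is to decompose the behaviour of $g$ near $x$ via the Morse lemma, isolating a local piece and finitely many ``external branches'' attached through $\partial B(x,r_0)$; once $r$ is taken large enough for $x+\Lambda_r$ to contain every bounded branch in its interior, the pivotality is forced to stabilise.

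First I would fix $r_0>0$ small enough that on $\overline{B(x,2r_0)}$ the function $g-\ell$ is Morse-conjugate to its quadratic normal form and that $g$ has no other critical points at level $\ell$ in that ball. Two consequences of the normal form are used repeatedly: (a) the traces $\{g\ge\ell\}\cap\partial B(x,r_0)$ and $\{g=\ell\}\cap\partial B(x,r_0)$ each have at most two connected components; and (b) the set $\{g\ge\ell\}\cap \overline{B(x,r_0)}$ is connected. Fix $W\subset B(x,r_0/2)$, $h\in C_c^2(W)$ with $h(x)>0$, and $\delta>0$ as in Lemma~\ref{l:change}. Because $g\pm\delta h=g$ outside $W$, every component of $\{g\pm\delta h\ge\ell\}$ disjoint from $\overline{W}$ is actually a component of $\{g\ge\ell\}$ distinct from the unique component $C_x$ meeting $\overline{W}$ (uniqueness by (b)). These ``unaffected'' components contribute equally to $N_\star(B,g-\delta h,\ell)$ and $N_\star(B,g+\delta h,\ell)$, so
\[\Delta_B := k'_B - k_B \;=\; m^+(B)-m^-(B),\]
where $m^{\pm}(B)$ counts the components of $\{g\pm\delta h\ge\ell\}$ (or of $\{g\pm\delta h=\ell\}$) that intersect $\overline{W}$ and are contained in $B$.

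Next I would describe the affected components combinatorially. Each of them is the connected union of (i) a connected component of $\{g\pm\delta h\ge\ell\}\cap B(x,r_0)$, whose number and gluing pattern on $\partial B(x,r_0)$ is read off from the Morse index of $x$ and the sign of the perturbation, together with (ii) one or more of the \emph{external branches} $\hat A_1,\dots,\hat A_m$ ($m\le 2$ by (a)), where $\hat A_j$ is the connected component of $\{g\ge\ell\}\setminus B(x,r_0)$ (resp.\ $\{g=\ell\}\setminus B(x,r_0)$) containing the $j$-th component of the trace on $\partial B(x,r_0)$. Each $\hat A_j$ is closed and connected in $\R^d$, hence either bounded (compact) or unbounded. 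Choose $r \ge r_0$ large enough that $x+\Lambda_r$ contains every bounded $\hat A_j$ strictly in its interior. For any box $B\supset x+\Lambda_r$, the bounded branches then lie inside $B$ while the unbounded ones intersect $\partial B$, so an affected post-perturbation component is contained in $B$ if and only if all of its constituent $\hat A_j$'s are bounded. Since this criterion, the Morse data at $x$, and the coincidence pattern of the $\hat A_j$'s as subsets of $\R^d$ depend only on $g$ and $r$, not on $B$, the quantities $m^\pm(B)$, and therefore $\Delta_B$, are constant on $\{B:B\supset x+\Lambda_r\}$, yielding a single $\Delta\in\{+,-,0\}$ (with $|\Delta|\le 1$ by Lemma~\ref{l:change}).

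The main obstacle will be making the combinatorial description above rigorous, in particular verifying that every affected post-perturbation component really is a local piece glued to external branches with no ``hidden'' reconnection path inside $B(x,r_0)$. This relies on (b) together with the fact that the support $W$ lies well inside $B(x,r_0)$, and would be handled by a case analysis across the Morse index $k\in\{0,\dots,d\}$ and $\star\in\{ES,LS\}$; the non-trivial cases to check are $k\in\{0,d-1,d\}$ for $\star=ES$ and $k\in\{0,1,d-1,d\}$ for $\star=LS$, the remaining indices being trivially non-pivotal because the local topology is unchanged (up to homeomorphism of the ambient ball) by the perturbation.
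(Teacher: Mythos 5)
Your approach is in the same spirit as the paper's — both identify a finite collection of bounded pieces attached to $x$ and take $r$ large enough to swallow them, so that any further enlargement of $B$ only adds or removes components far from the perturbation — but the technical routes genuinely differ. The paper sets $A$ to be the component of $\{g\ge\ell\}$ containing $x$, decomposes $A\setminus\{x\}$ into finitely many components $A_1,\ldots,A_n$ (bounded $A_1,\ldots,A_m$), and picks $r$ to contain the bounded ones. It then uses the bookkeeping identity
\[
N_\star(B,g\pm\delta h,\ell)=N_\star(x+\Lambda_r,g\pm\delta h,\ell)+\tilde N_\star(B,x+\Lambda_r,g\pm\delta h,\ell),
\]
invokes Lemma~\ref{l:change} to supply the pivotality $\Delta$ for the fixed box $x+\Lambda_r$, and argues that the correction $\tilde N_\star$ is insensitive to the sign of the perturbation (by shrinking $W$ to miss all the components that contribute to $\tilde N_\star$). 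In this way the paper never needs to \emph{compute} the change in component count for a general $B$; it only needs to show that passing from $x+\Lambda_r$ to $B$ does not disturb the difference, and the Morse-index case analysis is delegated entirely to Lemma~\ref{l:change}, where it has already been done.

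Your proof instead decomposes via the trace on a small sphere and the external branches $\hat A_1,\ldots,\hat A_m$, and aims to compute $\Delta_B=m^+(B)-m^-(B)$ directly, arguing that the gluing combinatorics (which $\hat A_j$'s a post-perturbation component contains, and which of these are bounded) is independent of $B$. This works, but it does not piggy-back on Lemma~\ref{l:change}, and so it re-opens exactly the index-by-index case analysis that you defer in the final paragraph; this is a genuine gap in the write-up, even if the missing cases are manageable. Two smaller points: for claim (a) you should take $B(x,r_0)$ to be the preimage of a Euclidean ball under the Morse chart $\phi$ (the trace count follows cleanly in those coordinates; for a Euclidean ball in the ambient coordinates one needs an additional isotopy argument), and the assertion that every component of $\{g\pm\delta h\ge\ell\}$ disjoint from $\overline W$ is a component of $\{g\ge\ell\}$ is true but not immediate — it uses that $C$ is closed and that any escape path from $C$ inside a component of $\{g\ge\ell\}$ must pass through $W$, forcing $\overline C\cap\overline W\neq\emptyset$. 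Overall: correct strategy, same core idea, but a different (and, as written, less self-contained) route than the paper's reduction to $x+\Lambda_r$.
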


\subsection{Proof of the covariance formula}
\label{s:vfproof}

In this subsection we prove the covariance formula in Theorem \ref{t:cf}. Throughout the rest of this section we fix $\ell \in \R$, boxes $B_1,B_2\subset\R^d$, and  $\star,\diamond \in \{\mathrm{ES},\mathrm{LS}\}$, and drop these variables from the notation. For brevity we write $N = N_\star(B_1)$ and $N^t = N^t_\diamond(B_2)$, recalling that $N^1 = N^1_\diamond(B_2) = N_\diamond(B_2)$.

\subsubsection{Gaussian conditioning}\label{ss:GaussianRegression}
Throughout this paper we will make use of conditioning for Gaussian vectors; that is, given two jointly Gaussian vectors $X$ and $Y$ of length $m_1$ and $m_2$ respectively we will consider variables of the form $(Y\;|\;X=x)$ for some $x\in\R^{m_1}$. Since $\{X=x\}$ may have measure zero, to make sense of this definition we condition on the event $\lvert X-x\rvert<\epsilon$ and take $\epsilon\downarrow 0$. This is sometimes known as `vertical window' conditioning (and is the most common form of conditioning on a continuous variable taking a particular value). This form of conditioning is always well-defined whenever $X$ is non-degenerate. To see this, one can define the variable $Z$ by
\begin{displaymath}
    Y=Z+\Sigma_{YX}\Sigma_X^{-1}X
\end{displaymath}
where $\Sigma_{YX}=\Cov[Y,X]$ and $\Sigma_X=\Cov[X]$. Basic linear algebra shows that $Z$ is uncorrelated with $X$ and hence, by Gaussianity, independent. Then by the above equation $(Y\;|\;X=x)=Z+\Sigma_{YX}\Sigma_X^{-1}x$. From this we see that the conditioned variable is also Gaussian and we can explicitly compute its mean and covariance matrix as
\begin{displaymath}
    \E[Y\;|\;X=x]=\E[Y]+\Sigma_{YX}\Sigma_X^{-1}(x-\E[X])\qquad\text{and}\qquad\Cov[Y\;|\;X=x]=\Cov[Y]-\Sigma_{YX}\Sigma_X^{-1}\Sigma_{YX}^T.
\end{displaymath}
This relationship between the conditional and unconditional moments is sometimes known as \emph{Gaussian regression}.

\subsubsection{The pivotal measures}\label{ss:Pivotal}
We begin by defining the pivotal measures $dp^{\pm \pm}_t(x,y)$ that appear in Theorem \ref{t:cf}.

Throughout this section we view $B_1$ and $B_2$ as stratified boxes in the sense of Section~\ref{s:stability} and fix open neighbourhoods $U_1,U_2 \subset \R^d$ of $B_1$ and $B_2$ respectively. We denote by $\mathcal{F}^{(1)},\mathcal{F}^{(2)}$ the set of strata of $B_1,B_2$ respectively, $\mathcal{F}^{(i)}_0 \subset \mathcal{F}^{(i)}$ the subset of strata of dimension $m=0$, and write $\mathcal{F}^{(1,2)}=\mathcal{F}^{(1)}\times\mathcal{F}^{(2)}$. Recall that each stratum $F \in \mathcal{F}^{(i)} \setminus \mathcal{F}^{(i)}_0$ is equipped with its Lebesgue measure $dv_F$, and each stratum $F \in \mathcal{F}^{(i)}_0$ with a delta mass, also denoted $dv_F$. 

For $x\in B_1$ we define $F_x=F_x(B_1)$ to be the unique stratum of $B_1$ containing $x$ and for $y\in B_2$ we define $F_y=F_y(B_2)$ similarly. Let us fix momentarily $t \in (0, 1]$, $x\in B_1$ and $y\in B_2$. Consider the fields $f$ and $f^t$ conditioned on
\begin{equation}
    \label{e:cond}
(f(x),f^t(y),\nabla_{F_x} f(x), \nabla_{F_y} f^t(y)) = (\ell,\ell,0,0).
\end{equation} 
This conditioning is well-defined since the vector on the left-hand side of \eqref{e:cond} is non-degenerate even if $x=y$ (see Lemma \ref{l:nondegen2} for a proof).

For a square matrix $M$, we write $\lvert M\rvert$ to denote the absolute value of the determinant of $M$. Using the fact that $(f(x),\nabla f(x),f(y),\nabla f(y))$ is a non-degenerate Gaussian vector (see Remark~\ref{r:nondegen}) we see that under the conditioning \eqref{e:cond} the partial derivatives of $f$ at $x$ in directions normal to $F_x$ must be non-zero (and similarly for $f^t$ at $y$). Therefore, assuming that $|\nabla_{F_x}^2 f(x)|$ and $|\nabla_{F_y}^2 f^t(y) |$ are non-zero, we let $A(\iota_1,\iota_2)$ denote the event that the pairs $(k, k')$, guaranteed to exist by Lemma \ref{l:change} for $f$ and $f^t$, are given by $\iota_1 , \iota_2 \in P^+ \cup P^- \cup P^0$ respectively. In particular we have the decomposition
\begin{equation}
\label{e:partition}
    |\nabla_{F_x}^2 f(x) \nabla_{F_y}^2 f^t(y) | =  |\nabla_{F_x}^2 f(x) \nabla_{F_y}^2 f^t(y) | \sum_{ \iota_1,\iota_2 \in P^+ \cup P^- \cup P^0  }\id_{A(\iota_1,\iota_2)} .
\end{equation}

\begin{definition}\label{d:PivotalMeasures}
For each $\iota_1, \iota_2 \in P^+ \cup P^- \cup P^0$ we define the associated \textit{two-point intensity}
\[ I^{\iota_1, \iota_2}_t(x,y) =  \varphi_t(x,y) \widetilde{\E} \big[ |\nabla_{F_x}^2 f(x) \nabla_{F_y}^2 f^t(y) | \id_{A(\iota_1,\iota_2)}  \big] , \]
where $\varphi_t(x,y)$ denotes the density of the conditioning \eqref{e:cond} (i.e.\ the density of the non-degenerate Gaussian vector $(f(x),f^t(y),\nabla_{F_x} f(x), \nabla_{F_y} f^t(y))$ evaluated at $(\ell,\ell,0,0)$), and $\widetilde{\E}$ denotes expectation under this conditioning. If $F_x \in \mathcal{F}^{(1)}_0$ (the set of strata with dimension $0$), then by convention we remove $\nabla_{F_x} f$ from the conditioning \eqref{e:cond} and corresponding density $\varphi_t(x,y)$ and set $|\nabla^2_{F_x} f(x) | = 1$, and similarly for $F_y\in \mathcal{F}^{(2)}_0$.

We also define a corresponding measure equal to the product of Lebesgue measures on each pair of strata weighted by this intensity:
\[ dp^{\iota_1, \iota_2}_t(x,y) =I^{\iota_1,\iota_2}_t(x,y) dv_{F_x}(x) dv_{F_y}(y) . \]

The \textit{pivotal intensities} $I^{\pm,\pm}_t(x,y)$, and their corresponding \textit{pivotal measures} $dp^{\pm,\pm}_t(x,y)$, are then defined as
\begin{equation}
\label{e:pivint}
I^{\pm,\pm}_t(x,y) = \sum_{\iota_1 \in P^\pm, \iota_2 \in P^\pm } I^{\iota_1,\iota_2}_t(x,y) \;\; \text{and} \;\; dp^{\pm, \pm}_t(x,y) =  I^{\pm,\pm}_t(x,y) dv_{F_x}(x) dv_{F_y}(y) .
\end{equation}
\end{definition}

Let us observe that 
\begin{equation}
    \label{e:sumint1}
 I^{+,+}_t(x,y) + I^{-,-}_t(x,y) + I^{+,-}(x,y) + I^{-,+}_t(x,y)  =  \sum_{\iota_1 \in P, \iota_2 \in P }  I^{\iota_1,\iota_2}_t(x,y),
 \end{equation}
and
\begin{equation}
    \label{e:sumint2}
 I^{+,+}_t(x,y) + I^{-,-}_t(x,y) - I^{+,-}(x,y) - I^{-,+}_t(x,y)  =  \sum_{\iota_1 \in P, \iota_2 \in P }  \sigma(\iota_1)\sigma(\iota_2) I^{\iota_1,\iota_2}_t(x,y),
\end{equation}
where $\sigma(\iota) = \pm 1$ if  $\iota \in P^\pm$.

\subsubsection{The critical point measure}\label{ss:CriticalPoint}

We next define the \textit{critical point intensity} and \textit{measure} as
\begin{equation}\label{e:CritInt}
I_t(x,y) = \varphi_t(x,y) \widetilde{\E} \big[ |\nabla_{F_x}^2 f(x) \nabla_{F_y}^2 f^t(y) | \big]
\end{equation}
and
\[ dp_t(x,y) = I_t(x,y) dv_{F_x}(x) dv_{F_y}(y) \]
respectively. Note that \eqref{e:partition}--\eqref{e:sumint1} imply the bound (c.f.\ \eqref{e:trivialub})
\begin{equation}\label{e:intbound}
I_t(x,y) = \sum_{\iota_1, \iota_2 \in P \cup P^0}  I^{\iota_1,\iota_2}_t(x,y)   \ge \sum_{\iota_1, \iota_2 \in P}  I^{\iota_1,\iota_2}_t(x,y) = I^{+,+}_t(x,y) +   I^{-,-}_t(x,y) +   I^{+,-}_t(x,y) +  I^{-,+}_t(x,y).
\end{equation}
Moreover in Section \ref{s:dbcp} we prove that the pivotal measures are integrable on the diagonal and uniformly bounded away from the diagonal: 

\begin{proposition}[Integrability and uniform bound for the critical point intensity]
\label{p:qcb}
There exists a constant $c > 0$, depending only on the field $f$ and the level $\ell \in \R$, such that, for all $(F_1, F_2) \in \mathcal{F}^{(1,2)}$,
\begin{equation}
\label{e:qcb1}
\sup_{x \in F_1} \int_0^1  \int_{\substack{y \in  F_2 \\ |x-y| \le 1 }} \!  I_t(x,y) \, dv_{F_y}(y) dt \le c  
\end{equation} 
and
\begin{equation}
\label{e:qcb2}
 \sup_{t \in [0,1) } 
 \sup_{\substack{(x,y) \in F_1 \times F_2 \\ |x-y| \ge 1  }} \! \! \! I_t(x,y) \le c   .
\end{equation}
\end{proposition}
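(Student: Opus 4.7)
The strategy is to apply the Kac--Rice formula to the two-point intensity \eqref{e:CritInt}, split into the off-diagonal regime $|x-y|\ge 1$ (yielding the uniform bound \eqref{e:qcb2}) and the near-diagonal regime $|x-y|\le 1$ (yielding the integrability \eqref{e:qcb1}). The main difficulty is the latter: as $(t,y)\to(1,x)$ the conditioning \eqref{e:cond} becomes degenerate and $\varphi_t(x,y)$ diverges, and we must show that integration over $(y,t)$ produces a compensating Jacobian that renders the singularity integrable.

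For \eqref{e:qcb2}, stationarity reduces to $x=0$. The covariance matrix of $V_{t,s}:=(f(0),f^t(s),\nabla_{F_1}f(0),\nabla_{F_2}f^t(s))$ has block form with diagonal blocks equal to the marginal covariance of $(f,\nabla_{F_i}f)$ and off-diagonal blocks equal to $t$ times a cross-covariance built from $K$ and its derivatives at $s$. At $t=1$ the vector $V_{1,s}$ is non-degenerate for every $s\neq 0$ by Assumption \ref{a:gen}(3) and Lemma \ref{l:nondegen2}, and for $t<1$ the determinant only increases. Assumption \ref{a:gen}(2) forces the cross-covariance to vanish as $|s|\to\infty$, so the covariance asymptotically decouples into two independent non-degenerate blocks. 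A compactness argument on $\{|s|\ge 1,\,t\in[0,1]\}$ then yields a uniform lower bound on $\det\Sigma_{t,s}$ and hence a uniform upper bound on $\varphi_t$; the conditional expectation $\widetilde E[|\det\nabla^2_{F_1}f(x)\det\nabla^2_{F_2}f^t(y)|]$ is similarly uniformly bounded via continuity in the covariance parameters and Gaussian moment bounds, giving \eqref{e:qcb2}.

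For \eqref{e:qcb1}, the key step is to exploit the decomposition $f^t=tf+\sqrt{1-t^2}\,\tilde f$ with $f$ and $\tilde f$ independent, and to condition on $f$ first via the tower property. Conditional on $f$, the pair $(f^t(y),\nabla_{F_2}f^t(y))$ is Gaussian with mean $t(f(y),\nabla_{F_2}f(y))$ and covariance $(1-t^2)\Sigma_0$, for $\Sigma_0:=\Cov(\tilde f(0),\nabla_{F_2}\tilde f(0))$; its density at $(\ell,0)$ equals
\begin{equation*}
(1-t^2)^{-(1+m)/2}\,\phi_{\Sigma_0}\!\Bigl(\tfrac{\ell-tf(y)}{\sqrt{1-t^2}},\tfrac{-t\nabla_{F_2}f(y)}{\sqrt{1-t^2}}\Bigr),
\end{equation*}
where $m:=\dim F_2$ and $\phi_{\Sigma_0}$ is the standard Gaussian density with covariance $\Sigma_0$. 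Under the outer Kac--Rice conditioning $(f(x),\nabla_{F_1}f(x))=(\ell,0)$ and for $y$ close to $x$, Taylor expansion gives $\ell-tf(y)\approx(1-t)\ell+O(|y-x|^2)$ and $\nabla f(y)\approx\nabla^2 f(x)(y-x)$. The change of variables $y\mapsto w=-t[\nabla^2 f(x)(y-x)]_{F_2}/\sqrt{1-t^2}$ then produces a Jacobian of order $(1-t^2)^{m/2}/|\det[\nabla^2 f(x)]_{F_2}|$, and the resulting integral of $\phi_{\Sigma_0}$ over $w$ is bounded. The Jacobian's denominator cancels the $|\det\nabla^2_{F_2}f^t(y)|$ factor in the Kac--Rice integrand (since $|\det\nabla^2_{F_2}f^t(y)|\approx|\det[\nabla^2 f(x)]_{F_2}|$ in the relevant region), leaving a net singularity of order $(1-t^2)^{-1/2}$. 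Since $\int_0^1(1-t^2)^{-1/2}\,dt<\infty$ and the remaining Gaussian moment of $|\det\nabla^2_{F_1}f(x)|$ is finite under the outer conditioning, \eqref{e:qcb1} follows.

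The principal technical obstacle is carrying out the change of variables rigorously when $F_1$ and $F_2$ are strata of differing dimensions, since the map $y\mapsto\nabla_{F_2}f(y)$ then constrains $y-x$ only in a subspace of $\R^d$; the transverse directions of $y$ must be controlled through the $f^t(y)=\ell$ constraint and the Gaussian tail decay of $\phi_{\Sigma_0}$, together with a further regression argument to ensure the relevant sub-Hessian $[\nabla^2 f(x)]_{F_2}$ remains invertible under the outer conditioning (this being where Assumption \ref{a:gen}(3) and Lemma \ref{l:nondegen2} are invoked again). Uniformity in $x\in F_1$ follows from stationarity combined with a compactness argument near the boundary of $F_1$, and pairs of strata separated by distance at least $1$ are absorbed into the off-diagonal bound already established.
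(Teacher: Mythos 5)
Your argument for \eqref{e:qcb2} is essentially the same as the paper's: stationarity, monotonicity of the determinant in $t$ (item (4) of Lemma~\ref{l:dc}), decay of $\partial^\alpha K$ at infinity, and compactness. That part is fine.

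For \eqref{e:qcb1} you take a genuinely different route. The paper never conditions on $f$ alone: it bounds $\varphi_t(x,y)$ \emph{pointwise} by proving a quantitative lower bound on $\mathrm{DC}\bigl(f(x),f^t(y),\nabla_{F_1}f(x),\nabla_{F_2}f^t(y)\bigr)$ of the form $c\,\max\{(1-t)^{1/2},|x-y|\}^{2d'}(1-t)$ (Lemma~\ref{l:numbound}, via the factorisation $F_1F_2F_3$ and the divided-difference Lemma~\ref{l:dd}), bounds the conditional Hessian moment pointwise (Lemma~\ref{l:denombound}, via Lemmas~\ref{l:unifbound} and~\ref{l:dd2}), and then integrates a \emph{deterministic} function of $(t,y)$. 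You instead condition on $f$ first, write out the conditional density of $(f^t(y),\nabla_{F_2}f^t(y))$, and try to integrate out $y$ by a change of variables. This is the ``integrate first, condition later'' version and is in principle viable, but your execution has a real gap.

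The gap is the cancellation step. After your (linearised) change of variables $y\mapsto w$ the integrand carries the factor $|\det[\nabla^2 f(x)]_{F_2}|^{-1}$, which you claim is cancelled by $\mathcal{H}_t(y;f):=\E[\,|\det\nabla^2_{F_2}f^t(y)|\mid f,\dots]$. That approximation is not uniform over the conditional law of $f$. When $\det[\nabla^2 f(x)]_{F_2}$ is small, $\nabla^2_{F_2}f^t(y)=t\,\nabla^2_{F_2}f(y)+\sqrt{1-t^2}\,\nabla^2_{F_2}\tilde f(y)$ still has an order-$(1-t^2)^{m/2}$ floor coming from $\tilde f$, so $\mathcal{H}_t/|\det[\nabla^2 f(x)]_{F_2}|$ blows up like $|\det[\nabla^2 f(x)]_{F_2}|^{-1}$. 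Since $\E[\,|\det[\nabla^2 f(x)]_{F_2}|^{-1}\,]$ is in general infinite, your final outer expectation over $f$ is not controlled by the argument as written; you would need to split according to whether $(1-t)^{1/2}$ or $|y-x|\,\|\nabla^2 f(x)\|$ dominates (this is precisely the role of the $\max\{(1-t)^{1/2},|x|\}^{2d'}$ factor in Lemma~\ref{l:numbound}). Smaller issues in the same step: the exact Jacobian is $|\det\nabla^2_{F_2}f(y)|$, not the constant $|\det[\nabla^2 f(x)]_{F_2}|$ you obtain by linearisation; the linearised $w$ is not the actual argument of $\phi_{\Sigma_0}$, so the ``integral of $\phi_{\Sigma_0}$ over $w$'' is itself only heuristic; and the change of variables produces an uncontrolled $t^{-m}$ factor that you would have to handle on $t\le 1/2$ separately. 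The paper's pointwise-DC approach sidesteps all of this because the Hessian factors remain inside the conditional expectation under the \emph{joint} (non-degenerate) conditioning, where they are bounded by Lemma~\ref{l:dd2}--\ref{l:denombound}; nothing of the type $\E[|\det|^{-1}]$ ever appears.
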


In the following section we shall need a slight generalisation of this integrability, which we state now. This applies to a `stationary' version of pivotal intensity, i.e.\ one that does not depend on the boxes $B_1$ and $B_2$. Specifically, for $x,y \in \R^d$, $t \in (0,1]$, and $k \in \N$, we define
\begin{equation}
\label{e:hatik}
\hat{I}^{(k)}_t(x,y) = \hat{\varphi}_t(x,y)\widehat{\E} \big[ |\nabla^2 f(x) \nabla^2 f^t(y) |^k \big]^{1/k}
\end{equation}
where $\hat{\varphi}_t(x,y)$ denotes the density of the conditioning
\begin{equation}
\label{e:cond2} (f(x),f^t(y),\nabla f(x), \nabla f^t(y)) = (\ell,\ell,0,0),
\end{equation}
and $\widehat{\E}$ denotes expectation under this conditioning. Abbreviating $\hat{I}_t(x,y) := \hat{I}^{(1)}_t(x,y)$, we note that if $x$ and $y$ lie in the interior of the boxes $B_1$ and $B_2$ respectively, then $\hat{I}_t(x,y) = \hat{I}_t(0,x-y)$ coincides with the pivotal intensity $I_t(x,y)$ defined in \eqref{e:CritInt}.

\begin{proposition}[Bounds for the stationary critical point intensity]
\label{p:qcb2}
There exists a constant $c > 0$, depending only on the field $f$, the level $\ell \in \R$, and $k \in \N$, such that
\[ \int_0^1  \int_{ |u| \le 1 } \!  \hat{I}^{(k)}_t(0,u) \, du dt \le c  
 \qquad \text{and} \qquad  \sup_{t \in (0,1] } \sup_{ |u| \ge 1  }  \hat{I}^{(k)}_t(0,u) \le c   . \]
\end{proposition}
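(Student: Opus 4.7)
The plan is to derive Proposition~\ref{p:qcb2} from Proposition~\ref{p:qcb} in two steps: first establishing the $k=1$ case by identifying $\hat{I}^{(1)}_t$ with the interior-stratum critical point intensity of a suitable box, then lifting to general $k\geq 2$ via the $L^p$--$L^1$ equivalence for polynomials in Gaussian vectors.

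For the $k=1$ reduction, I take $B_1=B_2=[-M,M]^d$ for any $M>1$, let $F$ denote the top-dimensional (interior) stratum, and observe that for $x,y\in F$ the conditioning~\eqref{e:cond} defining $I_t(x,y)$ coincides with~\eqref{e:cond2} defining $\hat{I}^{(1)}_t(x,y)$, since $\nabla_F=\nabla$ and $\nabla^2_F=\nabla^2$ on a top-dimensional stratum. Hence $I_t(x,y)=\hat{I}^{(1)}_t(x,y)=\hat{I}^{(1)}_t(0,y-x)$ by stationarity. Applying Proposition~\ref{p:qcb} with $F_1=F_2=F$ at $x=0$ yields $\int_0^1\int_{|u|\leq 1}\hat{I}^{(1)}_t(0,u)\,du\,dt\leq c$ once $M$ is large enough that $\{|u|\leq 1\}$ sits in the interior. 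Since the constant in Proposition~\ref{p:qcb} depends only on $f$ and $\ell$ (and in particular not on $M$), varying $M$ also yields the uniform bound $\sup_{t\in(0,1]}\sup_{|u|\geq 1}\hat{I}^{(1)}_t(0,u)\leq c$.

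For the lift to $k\geq 2$, under the conditioning~\eqref{e:cond2} the vector $(\nabla^2 f(0),\nabla^2 f^t(u))$ is Gaussian (possibly degenerate when $(t,u)$ is near $(1,0)$), and $|\nabla^2 f(0)\nabla^2 f^t(u)|=|\det\nabla^2 f(0)\cdot\det\nabla^2 f^t(u)|$ is the absolute value of a polynomial of degree $2d$ in its entries. By the classical $L^p$--$L^1$ equivalence for polynomials in Gaussian vectors (a consequence of Gaussian hypercontractivity combined with the Carbery--Wright anti-concentration inequality), there exists a constant $C_{k,d}$ depending only on $k$ and $d$ such that
\begin{equation*}
\widehat{E}\big[|\nabla^2 f(0)\nabla^2 f^t(u)|^k\big]^{1/k}\leq C_{k,d}\,\widehat{E}\big[|\nabla^2 f(0)\nabla^2 f^t(u)|\big].
\end{equation*}
Multiplying both sides by $\hat{\varphi}_t(0,u)$ yields the pointwise bound $\hat{I}^{(k)}_t(0,u)\leq C_{k,d}\,\hat{I}^{(1)}_t(0,u)$, and combining this with the $k=1$ bounds completes the proof.

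The main obstacle is ensuring that $C_{k,d}$ is uniform as the conditional covariance of $(\nabla^2 f(0),\nabla^2 f^t(u))$ degenerates near $(t,u)=(1,0)$. This is resolved by the observation that both hypercontractivity and the Carbery--Wright inequality depend on the underlying Gaussian distribution only through the polynomial degree: any (possibly degenerate) Gaussian vector can be written as a linear image of a standard Gaussian, turning the polynomial into one of the same degree in a standard Gaussian, so $C_{k,d}$ is genuinely uniform in $(t,u)$.
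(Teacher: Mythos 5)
Your proof is correct, but it follows a genuinely different route from the paper's. The paper treats the general $k$ case as essentially free: Lemma~\ref{l:denombound} is already stated and proven for all $k\in\N$ (via H\"older's inequality on the determinant expansion, reducing to bounding conditional $k$-th moments of a single second derivative $\partial_v\partial_{v'}f(0)$, which in turn follow from the uniform conditional mean and variance bounds of Lemma~\ref{l:unifbound}), and the proof of Proposition~\ref{p:qcb2} is then literally the one-line remark that one repeats the proof of Proposition~\ref{p:qcb} with the general-$k$ version of Lemma~\ref{l:denombound}. You instead take the $k=1$ case as a black box, obtained exactly as you describe by identifying $\hat I^{(1)}_t$ with the interior-stratum intensity $I_t$ of a large box (an identification the paper also notes), and then bootstrap to general $k$ via $L^p$--$L^1$ moment equivalence for polynomials of Gaussian vectors, noting that the equivalence constant depends only on the degree $2d$ and so is uniform even as the conditional covariance degenerates near $(t,u)=(1,0)$. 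This is a valid and somewhat more conceptual route: it avoids re-inspecting the proof of Proposition~\ref{p:qcb} and packages the $k$-dependence into a single well-known fact about Gaussian chaos, at the cost of invoking hypercontractivity as an external tool where the paper's argument is entirely elementary (and, as a minor quibble, the moment comparison follows from hypercontractivity alone via a H\"older/Paley--Zygmund interpolation; Carbery--Wright is not actually needed). One small point worth flagging in both proofs: Proposition~\ref{p:qcb} (and Lemmas~\ref{l:numbound}--\ref{l:denombound}) are stated for $t\in[0,1)$ while Proposition~\ref{p:qcb2} claims the bound for $t\in(0,1]$; the endpoint $t=1$ with $|u|\geq 1$ needs a remark about continuity (the relevant determinants and conditional moments extend continuously to $t=1$ away from $u=0$), which is implicit in the paper's terse proof and also omitted in yours.
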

\begin{remark}
We only use this proposition in the case $k=1,2$, but formulate its general statement to convey that $k \leq 2$ plays no role in the proof
\end{remark}

\subsubsection{The covariance formula for component count exceedances}

As mentioned above, we deduce the variance formula in Theorem \ref{t:vf} as a consequence of a covariance formula for `topological events' established in \cite{bmr20}; in particular we shall apply this formula to the component count tail events $\{N \ge k_1\}$ and $\{N^t \ge k_2\}$.

For $k \in \N$ define
\[ P^+_k =   \{ (k-1, k) \} \subset P^+  \quad  \text{and} \quad  P^-_k =    \{ (k, k-1) \} \subset P^- , \]
and $P_k = P^+_k \cup P^-_k$.

\begin{proposition}[Special case of {\cite[Theorem 2.14]{bmr20}}]
\label{p:cf}
For each $t \in [0,1)$ and $k_1, k_2 \in \N$,
\[ \Cov \big[ \id_{N \ge k_1}, \id_{ N^t \ge k_2} \big] =  \int_0^t  \int_{\Lambda_R \times \Lambda_R} \! \! \! K(x-y)  \! \! \sum_{\iota_1 \in P_{k_1} , \iota_2 \in P_{k_2} } \sigma(\iota_1)\sigma(\iota_2)  \, dp^{\iota_1, \iota_2}_s(x,y) \,  ds ,\]
recalling that $\sigma(\iota) = \pm 1$ if  $\iota \in P^\pm$.
\end{proposition}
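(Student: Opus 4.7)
The plan is to obtain Proposition~\ref{p:cf} as an immediate application of the general covariance formula for topological events, namely \cite[Theorem~2.14]{bmr20}, specialised to the pair of events
\[ A_1 = \{N_\star(B_1) \ge k_1\} \quad \text{and} \quad A_2^t = \{N^t_\diamond(B_2) \ge k_2\}. \]
Three points need to be checked.

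\emph{First, the topological character of the events.} The formula in \cite{bmr20} applies to events stable under small $C^1$ perturbations of the field at any configuration with no level-$\ell$ critical points. For the component count tail events this is precisely the content of Lemma~\ref{l:Topological_stability}, which implies that $\id_{A_1}$ and $\id_{A_2^t}$ are almost surely locally constant in a $C^1$-neighbourhood of any $(f, f^t)$ avoiding level-$\ell$ critical points on $B_1$, $B_2$ respectively.

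\emph{Second, the identification of pivotal configurations.} At a non-degenerate critical point, Lemma~\ref{l:change} supplies a pair $(k,k')\in\N_0\times\N_0$ with $|k-k'|\le 1$ such that positive (resp.\ negative) local perturbations of $f$ produce component count $k'$ (resp.\ $k$). The indicator $\id_{N\ge k_1}$ flips at such a critical point exactly when $k_1$ lies strictly between $k$ and $k'$, which forces $\{k,k'\}=\{k_1-1,k_1\}$. Hence the pivotal configurations for $A_1$ are exactly $P_{k_1}=P_{k_1}^+\cup P_{k_1}^-$, with the sign $\sigma(\iota_1)=\pm 1$ recording whether the perturbation activates or deactivates the event; an analogous statement holds for $A_2^t$ and $P_{k_2}$. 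With this identification, the two-point pivotal densities in \cite[Theorem~2.14]{bmr20} — Kac–Rice densities of pairs of non-degenerate critical points of $(f,f^s)$ at level $(\ell,\ell)$, restricted to each pair of strata $(F_x,F_y)$ and to the type $A(\iota_1,\iota_2)$ — coincide by inspection with our $dp_s^{\iota_1,\iota_2}$ from \eqref{e:pivint}, and the weight $\sigma(\iota_1)\sigma(\iota_2)$ in $\prod_i \sigma(\iota_i)$ matches.

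\emph{Third, the analytic hypotheses of \cite[Theorem~2.14]{bmr20}.} These require joint non-degeneracy of $(f(x),\nabla_{F_x}f(x),f^s(y),\nabla_{F_y}f^s(y))$ uniformly over $s\in[0,t]$ and $(x,y)\in B_1\times B_2$, together with integrability of the resulting pivotal intensities. The non-degeneracy follows from Assumption~\ref{a:gen}(3) via Lemma~\ref{l:nondegen2}, and the only place it could fail is the diagonal $x=y$ as $s\to 1$; that limit is excluded since $t<1$. Consequently $\varphi_s$ and the pivotal intensities are uniformly bounded on the compact set $B_1\times B_2\times[0,t]$, a control already much stronger than needed (compare Propositions~\ref{p:qcb}, \ref{p:qcb2}).

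The main obstacle is purely bookkeeping: carefully matching the stratified-box formulation here with the formulation in \cite{bmr20}, and checking that their abstract pivotal densities reduce, on each pair of strata and for the tail events above, exactly to our $dp_s^{\iota_1,\iota_2}$. Once this correspondence is made, the proposition is simply a rewriting of \cite[Theorem~2.14]{bmr20}; the more delicate singular behaviour at $s=1$ (needed for Theorem~\ref{t:cf} itself) is deferred and handled by the integrability bounds of Section~\ref{s:dbcp} via a monotone passage to the limit.
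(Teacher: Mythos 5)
Your approach is essentially the same as the paper's: both reduce the proposition to \cite[Theorem~2.14]{bmr20} applied to the tame stratifications of $B_1, B_2$ and the topological events $\{N\geq k_1\}$, $\{N^t\geq k_2\}$, matching the abstract pivotal densities of that reference to the $dp_s^{\iota_1,\iota_2}$ of this paper via Lemma~\ref{l:change} and noting the required non-degeneracy follows from Lemma~\ref{l:nondegen2}. Your identification of the pivotal sets $P_{k_1}, P_{k_2}$ and of the sign $\sigma(\iota_1)\sigma(\iota_2)$ with the $\pm$ split in the cited theorem is exactly right, and matches the paper's identification of $I^+_t$ and $I^-_t$ in \cite{bmr20} with $\sum_{\iota_1\in P_{k_1},\iota_2\in P_{k_2}}\id_{\sigma(\iota_1)=\sigma(\iota_2)}I_t^{\iota_1,\iota_2}$ and its $\neq$ counterpart.

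One technical point you elide, which the paper makes explicit: as stated, \cite[Theorem~2.14]{bmr20} only produces the covariance $\Cov[\id_{N\geq k_1},\id_{N\geq k_2}]$ equal to the integral over $s\in[0,1)$, i.e.\ the $t\uparrow 1$ limit. The version you need here, for a fixed $t<1$, is not literally the statement of that theorem; it is obtained from the final step of its \emph{proof}, immediately before the $t\uparrow 1$ limit is taken, where the integral $\int_0^t(\cdots)\,ds$ is identified with $\Cov[\id_{N\geq k_1},\id_{N^t\geq k_2}]$. Your write-up treats this as a direct application. The mathematics is fine — the intermediate-$t$ formula genuinely is proven in \cite{bmr20} — but as a citation matter one cannot simply "specialise" the stated theorem; one has to point to the intermediate display in its proof. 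This is precisely the detail the paper flags with "strictly speaking," and you should add it.
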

\begin{remark}
In Proposition \ref{p:cf} we could have omitted the strata $\mathcal{F}_0^{(1)},\mathcal{F}_0^{(2)}$ of dimension $0$ from the definition of the pivotal measures, because critical points on such strata are never pivotal for the component count (i.e\ if the field is conditioned to have a critical point on a dimension $0$ strata, then almost surely  $A(\iota_1,\iota_2)$ occurs for some $\iota_1,\iota_2 \in P^0$). However this is a specific feature of the way we defined the component count (i.e.\ excluding boundary components) and the convexity of $B_1,B_2$, so we prefer not to do this.
\end{remark}
\begin{proof}
This is \cite[Theorem 2.14]{bmr20} applied to the stratified domains $B_1,B_2$ (which is a `tame stratification' in the sense of \cite{bmr20}) and the topological events $\{N \ge k_1\}$ and $\{N \ge k_2\}$, since one can check (recalling Lemma \ref{l:change}) that the `pivotal intensities' $I^+_t(x,y)$ and $I^-_t(x,y)$ as defined in \cite[Theorem 2.14]{bmr20} coincide, respectively, with 
\[  \sum_{\iota_1 \in P_{k_1} , \iota_2 \in P_{k_2}} \id_{\sigma(\iota_1) = \sigma(\iota_2) }   I^{\iota_1, \iota_2}_t(x,y)    \qquad \text{and} \qquad   \sum_{\iota_1 \in P_{k_1} , \iota_2 \in P_{k_2} }  \id_{\sigma(\iota_1) \neq \sigma(\iota_2) }  I^{\iota_1, \iota_2}_t(x,y)   . \]
Strictly speaking an application of \cite[Theorem 2.14]{bmr20} only gives the limiting case $t \uparrow 1$ of this formula, that is,
\begin{align*}
 \Cov \big[ \id_{N \ge k_1}, \id_{ N \ge k_2} \big]  & =  \lim_{t \uparrow 1} \Cov \big[ \id_{N \ge k_1}, \id_{ N^t \ge k_2} \big]  \\
 &  = \lim_{t\uparrow 1}\int_0^t  \int_{\Lambda_R \times \Lambda_R} \! \! \! K(x-y)  \! \! \sum_{\iota_1 \in P_{k_1} , \iota_2 \in P_{k_2} } \sigma(\iota_1)\sigma(\iota_2)  \, dp^{\iota_1, \iota_2}_s(x,y) \,  ds  , 
 \end{align*}
 but the desired formula for all $t \in [0,1)$ is established in the final step of the proof of \cite[Theorem 2.14]{bmr20} immediately before taking the $t \uparrow 1$ limit.
\end{proof}

\subsubsection{Proof of the covariance formula} To deduce Theorem~\ref{t:cf} from Proposition \ref{p:cf} we make use of the integrability of the critical point measure
\begin{equation}
\label{e:qcbweak}
\int_{0}^1\int_{B_1\times B_2}dp_t(x,y)dt=\int_{0}^1\int_{B_1\times B_2}\sum_{F_1,F_2\in\mathcal{F}}I_t(x,y)dv_{F_1}(x)dv_{F_2}(y)dt<\infty
\end{equation}
which follows from \eqref{e:qcb1}--\eqref{e:qcb2} above.

We also make use of the first stability property from the previous section applied to the interpolation $f^t$ and the component count on the box $B_2$:

\begin{lemma}
\label{l:qual_stab}
Let $f$ satisfy Assumption~\ref{a:gen}, then $N^t_\diamond(B_2) \to N_\diamond(B_2)$ almost surely as $t \to 1$.
\end{lemma}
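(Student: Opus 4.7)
My plan is to couple the deterministic stability statement in Lemma~\ref{l:Topological_stability} with the almost-sure $C^1$-convergence $f^t \to f$ on a neighbourhood $U_2$ of $B_2$. The hinge of the argument is a non-degeneracy statement: almost surely $f$ has no stratified critical point at level $\ell$ inside $B_2$. Granted this, Lemma~\ref{l:Topological_stability} furnishes a $C^1$-open neighbourhood $V$ of $f$ on which $N_\diamond(B_2,\cdot,\ell)$ is constantly equal to $N_\diamond(B_2)$, and the $C^1$-convergence then places $f^t$ inside $V$ for all $t$ sufficiently close to $1$.

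The convergence step is essentially free. Decomposing $f^t - f = (t-1)f + \sqrt{1-t^2}\,\tilde f$ and using that both $f$ and $\tilde f$ are almost surely $C^3$ on the closure of $U_2$ by Assumption~\ref{a:gen}(1), one obtains $\|f^t-f\|_{C^1(U_2)} \le |t-1|\,\|f\|_{C^1(U_2)} + \sqrt{1-t^2}\,\|\tilde f\|_{C^1(U_2)} \to 0$ almost surely as $t\to 1$. (In fact convergence holds in $C^3$, but $C^1$ is all that Lemma~\ref{l:Topological_stability} demands.)

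The substantive step, and the one I would expect to require the most care, is the non-degeneracy claim. Since $\mathcal F^{(2)}$ is finite it suffices to handle one stratum $F$ at a time. For $F \in \mathcal F_0^{(2)}$ the stratum is a single vertex and $f(v)\ne\ell$ almost surely because $f(v)$ has a density on $\R$. For a stratum $F$ of dimension $m\ge 1$ I would apply a Bulinskaya-type argument to the $C^1$-map $F \ni x \mapsto (f(x),\nabla_F f(x)) \in \R^{m+1}$: the target has dimension strictly greater than the domain, and Assumption~\ref{a:gen}(3) guarantees (as noted in Remark~\ref{r:nondegen} and exploited throughout Section~\ref{s:vfproof}) that $(f(x),\nabla_F f(x))$ is a non-degenerate Gaussian vector uniformly in $x\in\bar F$, so its density at $(\ell,0)$ admits a uniform upper bound. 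Bulinskaya's lemma then gives that the preimage of $(\ell,0)$ on $F$ is almost surely empty. Taking the union over the finitely many strata yields the desired almost sure non-degeneracy.

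Putting the three ingredients together completes the proof: on the full probability event that $f$ has no stratified critical point at level $\ell$ in $B_2$, Lemma~\ref{l:Topological_stability} provides a $C^1$-neighbourhood $V$ of $f$ on which $N_\diamond(B_2,\cdot,\ell) \equiv N_\diamond(B_2)$, and the first step places $f^t \in V$ for all $t$ close enough to $1$, whence $N^t_\diamond(B_2) \to N_\diamond(B_2)$ almost surely. The main obstacle is the Bulinskaya step, but it is standard once the non-degeneracy input from Assumption~\ref{a:gen}(3) is in hand.
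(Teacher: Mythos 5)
Your proposal is correct and follows essentially the same route as the paper: invoke Bulinskaya's lemma (on each stratum) to show $f$ almost surely has no stratified critical point at level $\ell$ in $B_2$, and then apply Lemma~\ref{l:Topological_stability} to a $C^1$-neighbourhood of $f$ containing $f^t$ for $t$ near $1$. You simply make the almost-sure $C^1$-convergence $f^t\to f$ explicit, which the paper leaves implicit.
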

\begin{proof}
By Lemma~\ref{l:nondegen1}, $(f(x),\nabla f(x))$ is non-degenerate for any $x$. Combining this with the fact that $f$ is $C^2$-smooth, we may apply Bulinskaya's lemma (\cite[Lemma~11.2.10]{at07}), which states that $f$ has no (stratified) critical points on $B_2$ at level $\ell$ almost surely. (To be specific, this follows from applying Bulinskaya's lemma on each stratum of $B_2$.) The result then follows from Lemma \ref{l:Topological_stability}.
\end{proof}

\begin{proof}[Proof of Theorem \ref{t:cf}]
Recalling that $N^1 = N_\diamond(B_2)$, we seek a formula for $\Cov[N,N^1]$.

We first establish that, for every $t \in [0, 1)$,
\begin{equation}
    \label{e:vfproof1}
\Cov [ N, N^t]=\int_0^t  \int_{B_1\times B_2}  \! \! \! K(x-y) \Big( d p_s^{++}(x,y) + d p_s^{--}(x,y) - d p_s^{+-}(x,y) - d p_s^{-+}(x,y)  \Big) ds .
\end{equation}
For this we fix a truncation parameter $\tau\in\N$ and consider the variables
\[ \min\{N, \tau\} = \sum_{k=1}^\tau \id_{N \ge k} \quad \text{and} \quad \min\{N^t, \tau\} = \sum_{k=1}^\tau \id_{N^t \ge k}  . \]
Applying Proposition \ref{p:cf}, and interchanging the finite summation with the integrals, we deduce that
\begin{equation}
\label{e:vfproof2}
 \Cov \big[ \min\{N, \tau\}, \min\{ N^t, \tau \} \big]= \int_0^t  \int_{B_1 \times B_2} \! \! \! K(x-y)  \! \! \sum^\tau_{k_1,k_2 = 1} \sum_{\iota_1 \in P_{k_1} , \iota_2 \in P_{k_2} } \sigma(\iota_1)\sigma(\iota_2)  \, dp^{\iota_1, \iota_2}_s(x,y) \,  ds .
\end{equation}
Now consider a pair of strata $(F_1, F_2) \in \mathcal{F}^{(1,2)}$. For every $(x,y) \in F_1 \times F_2$ and $s\in[0,1)$, we have
\[ \lim_{\tau \to \infty}  \sum^\tau_{k_1,k_2 = 1} \sum_{\iota_1 \in P_{k_1} , \iota_2 \in P_{k_2} } \sigma(\iota_1)\sigma(\iota_2)  \, I^{\iota_1, \iota_2}_s(x,y)  = \sum_{\iota_1,\iota_2 \in P} \sigma(\iota_1)\sigma(\iota_2)  \, I^{\iota_1, \iota_2}_s(x,y) , \]
and also, by \eqref{e:intbound}, the uniform bound 
\[ \Big| \sum^\tau_{k_1,k_2 = 1} \sum_{\iota_1 \in P_{k_1} , \iota_2 \in P_{k_2} } \sigma(\iota_1)\sigma(\iota_2)  \, I^{\iota_1, \iota_2}_s(x,y) \Big| \le I_s(x,y) , \]
which is integrable on $(s,x,y) \in [0,t] \times F_1 \times F_2$ by \eqref{e:qcbweak}. Hence by dominated convergence we deduce that the limit of \eqref{e:vfproof2} as $\tau\to\infty$ is equal to
\begin{equation}\label{e:vfproof6}
\begin{aligned}
 &  \int_0^t  \int_{B_1\times B_2} \! \! \! K(x-y)  \! \! \sum_{\iota_1,\iota_2 \in P} \sigma(\iota_1)\sigma(\iota_2)  \, dp^{\iota_1, \iota_2}_s(x,y) \,  ds  \\
&  \qquad\qquad\qquad = \int_0^t  \int_{B_1\times B_2}  \! \! \! K(x-y) \Big( d p_s^{++}(x,y) + d p_s^{--}(x,y) - d p_s^{+-}(x,y) - d p_s^{-+}(x,y)  \Big) ds  ,
\end{aligned}
\end{equation}
where in the last equality we used \eqref{e:sumint2}. Since $\E[N^2],\E[(N^t)^2] < \infty$, taking $\tau \to \infty$ on both sides of \eqref{e:vfproof2} establishes~\eqref{e:vfproof1}.

In light of \eqref{e:vfproof1}, it remains to prove that 
\begin{equation}
    \label{e:vfproof3}
 \lim_{t \uparrow 1} \Cov \big[ N, N^t\big] = \Cov[N, N^1] .
\end{equation} 
By our argument above,
\begin{equation}\label{e:vfproof5}
    \lim_{t \uparrow 1} \Cov \big[ N, N^t\big]=\lim_{t \uparrow 1}\lim_{\tau\to\infty} \Cov \big[ \min\{N,\tau\}, \min\{N^t,\tau\}\big].
\end{equation}
Consider taking these limits in the opposite order: by Lemma \ref{l:qual_stab} $N^t \to N^1$ almost surely, then applying the dominated convergence theorem twice (using square integrability of $N,N^1$) we see that
\begin{displaymath}
\lim_{\tau\to\infty}\lim_{t\uparrow 1}\Cov \big[ \min\{N,\tau\}, \min\{N^t,\tau\}\big]=\lim_{\tau\to\infty}\Cov\big[\min\{N,\tau\},\min\{N^1,\tau\}\big]=\Cov\big[N, N^1\big].
\end{displaymath}
Therefore by \eqref{e:vfproof5}, we can establish \eqref{e:vfproof3} by justifying this interchange of limits. Let us denote
\begin{displaymath}
g(t,\tau)=\Cov \big[ \min\{N,\tau\}, \min\{N^t,\tau\}\big],    
\end{displaymath}
then by the Moore-Osgood theorem we can say that $\lim_{\tau\to\infty}\lim_{t\uparrow 1}g(t,\tau)=\lim_{t\uparrow 1}\lim_{\tau\to\infty}g(t,\tau)$ provided that
\begin{enumerate}
    \item $\lim_{\tau\to\infty}g(t,\tau)$ exists for every $t\in [0,1)$, and
    \item $\lim_{t\uparrow1}g(t,\tau)$ exists uniformly over $\tau$.
\end{enumerate}
We have already shown that $\lim_{\tau\to\infty}g(t,\tau)$ is given by \eqref{e:vfproof6} for each $t$, verifying the first requirement. Turning to the second requirement, let $\tau\in\N$ and $0 \le t_1<t_2<1$, by \eqref{e:vfproof2} and \eqref{e:intbound}
\begin{align*}
    \lvert g(t_1,\tau)-&g(t_2,\tau)\rvert\leq\int_{t_1}^{t_2}\int_{B_1\times B_2}\lvert K(0)\rvert\sum^\tau_{k_1,k_2 = 1} \sum_{\iota_1 \in P_{k_1} , \iota_2 \in P_{k_2} } dp^{\iota_1, \iota_2}_s(x,y) \,  ds\\
    &\leq \lvert K(0)\rvert\int_{t_1}^{t_2}\int_{B_1\times B_2}\sum^\infty_{k_1,k_2 = 1} \sum_{\iota_1 \in P_{k_1} , \iota_2 \in P_{k_2} } dp^{\iota_1, \iota_2}_s(x,y) \,  ds\leq \lvert K(0)\rvert\int_{t_1}^{t_2}\int_{B_1\times B_2}dp_s(x,y) ds.
\end{align*}
Hence, by \eqref{e:qcbweak}, $g(t,\tau)$ is uniformly (in $\tau$) Cauchy as $t\uparrow 1$ justifying the Moore-Osgood theorem. Combining \eqref{e:vfproof1} and \eqref{e:vfproof3} completes the proof of Theorem~\ref{t:cf}.
\end{proof}

\subsection{Specialisation to the variance and the general upper bound}
We next specialise the formula to the case of the variance and deduce the general upper bound, 
 stated in Theorems \ref{t:vf} and~\ref{t:vub} respectively.

Compared to the covariance formula, the specialisation to the variance has additional monotonicity and convexity properties. These are essentially due to Chatterjee, who proved this result for functions of finite-dimensional Gaussian vectors \cite{cha08, cha14}. Here we state a general version for continuous (not necessarily stationary) Gaussian fields:

\begin{proposition}[Monotonicity and convexity of the interpolation]
\label{p:mon}
Let $D \subseteq \R^d$, let $f$ and $\tilde{f}$ be independent copies of a continuous Gaussian field on $D$, and define the interpolation  $f^t = t f  + \sqrt{1-t^2} \tilde{f}$, $t \in [0,1]$. Let $G : C^0(D) \to \R$ be such that $\E[G(f)^2] < \infty$. Then
\[ t \mapsto \Cov[G(f), G(f^t)] \ , \quad t \in [0,1] , \]
is non-decreasing and convex.
\end{proposition}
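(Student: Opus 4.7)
The plan is to use the Wiener chaos decomposition of $L^2(\Omega,\sigma(f),\P)$ together with a Mehler-type identity to obtain an explicit power series for $\Cov[G(f),G(f^t)]$ in non-negative powers of $t$, from which both monotonicity and convexity can be read off directly.

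First I would fix a countable orthonormal sequence $\{\xi_k\}_{k\ge 1}$ in the Gaussian Hilbert space of $f$ (which is separable because $f$ is continuous and $D$ admits a countable dense subset), together with the corresponding sequence $\{\tilde\xi_k\}_{k\ge 1}$ for $\tilde f$ and the interpolates $\xi_k^t := t\xi_k + \sqrt{1-t^2}\tilde\xi_k$. I would then decompose
\[ G(f) = \sum_{n\ge 0} G_n, \quad G_n \in \mathcal{H}_n, \quad \sum_{n\ge 0} \|G_n\|_2^2 = \|G(f)\|_2^2 < \infty, \]
where $\mathcal{H}_n$ denotes the $n$-th Wiener chaos of $f$.

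The key step is to establish the Mehler-type identity
\[ \E[G(f^t) \mid f] = \sum_{n \ge 0} t^n G_n. \]
Using that the normalised products $\prod_k H_{\alpha_k}(\xi_k)$ with $|\alpha|=n$ form an orthonormal basis of $\mathcal{H}_n$, and that the pairs $(\xi_k,\tilde\xi_k)$ are jointly independent across $k$, this reduces to the one-dimensional identity
\[ \E\bigl[H_n(t\xi+\sqrt{1-t^2}\tilde\xi) \mid \xi\bigr] = t^n H_n(\xi), \]
which I would verify by applying conditional expectation to the Hermite generating function $e^{sx-s^2/2}$. Extension from polynomial functionals to a general $L^2$ functional then follows from $L^2$-density of the Hermite basis and contractivity of conditional expectation (noting that $f^t \stackrel{d}{=} f$). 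Orthogonality of distinct chaoses then gives
\[ \Cov[G(f),G(f^t)] = \E\bigl[G(f)\,\E[G(f^t)\mid f]\bigr] - \E[G(f)]^2 = \sum_{n\ge 1} t^n \|G_n\|_2^2. \]
Since $\sum_{n\ge 1} \|G_n\|_2^2 < \infty$, the series converges uniformly on $[0,1]$, and each summand $t\mapsto \|G_n\|_2^2\, t^n$ has non-negative first and second derivatives on $[0,1]$, so the sum inherits both properties.

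The main technical hurdle is the rigorous extension of the Mehler identity from polynomial functionals to an arbitrary $L^2$ functional on the infinite-dimensional Gaussian Hilbert space; this is handled by truncating to partial sums in the Hermite basis and passing to the limit using $L^2$-contractivity of conditional expectation. Every other step amounts to termwise manipulation of an absolutely convergent series of non-negative terms, so no delicate convergence issues arise.
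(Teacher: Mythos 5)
Your proof is correct, and it takes a genuinely different route from the paper. The paper's argument cites Chatterjee's finite-dimensional result (whose proof rests on the Gaussian interpolation formula, the finite-dimensional ancestor of Proposition~\ref{p:gcf}), and then extends it to arbitrary $L^2$ functionals of a continuous field by a three-step approximation: truncation to bounded $G$, reduction to a finite-dimensional marginal $f|_{D_n}$ via martingale convergence, and Gaussian mollification, using at each step that pointwise limits of non-decreasing convex functions remain non-decreasing and convex. You instead invoke the Wiener chaos decomposition $G(f) = \sum_n G_n$ and the Mehler-type identity $\E[G(f^t)\mid f] = \sum_{n\ge 0} t^n G_n$ (which you correctly reduce, via independence across a countable ONB of the Gaussian Hilbert space, to the scalar identity $\E[H_n(t\xi+\sqrt{1-t^2}\tilde\xi)\mid\xi] = t^n H_n(\xi)$ and then extend by $L^2$-contractivity of conditional expectation), arriving at the explicit representation $\Cov[G(f),G(f^t)] = \sum_{n\ge 1} t^n \|G_n\|_2^2$. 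From there the conclusion is immediate: each term is a non-negative multiple of $t^n$, which is non-decreasing and convex on $[0,1]$, and the series converges uniformly. Your route has the advantage of producing a closed formula rather than just the two qualitative properties, and in fact it yields more than what is asked for (the covariance is completely monotone in $s = -\log t$); the paper's route has the advantage of staying within the interpolation-formula framework used throughout the rest of the paper and avoiding Wiener chaos machinery. Both are valid; yours is the more standard argument in the modern Malliavin-calculus literature.
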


\begin{proof}
As mentioned above, the corresponding result in the finite-dimensional setting is due to Chatterjee. More precisely, let $X$ and $\widetilde{X}$ be $n$-dimensional i.i.d.\ standard Gaussian vectors, define $X^t = t X + \sqrt{1-t^2} \widetilde{X}$, $t \in [0,1]$, and let $G : \R^n \to \R$ be absolutely continuous with 
 $G(X) \in L^2$ and $\|\nabla G(X)\|_2 \in L^2$. Then \cite[Lemma 3.5]{cha08} states that 
 \[ t \mapsto \Cov[G(X),G(X^t)] \quad \text{and} \quad  t \mapsto \Cov[ \partial G(X) / \partial X_i , \partial G(X^t) / \partial X^t_i] \  ,  \quad 1 \le i \le n, \]
 are non-decreasing on $[0,1]$ (note that \cite[Lemma 3.5]{cha08} uses a different parameterisation for the interpolation). Recalling that (see Proposition \ref{p:gcf})
\begin{equation}
\label{e:gcf2}
\Cov[G(X),G(X^t)] = \int_0^t \sum_{1 \le i \le n} \E \Big[  \frac{\partial G(X)}{\partial X_i} \frac{\partial G(X^s)}{\partial X^s_i}   \Big] \, ds 
\end{equation}
this proves the result.

The extension to a general Gaussian vector $X$ follows immediately by writing $X = Q Z$, where $Q$ is a square-root of the covariance matrix of $X$ and $Z$ is an i.i.d.\ Gaussian vector, and applying the result to $G \mapsto G \circ Q$.

It remains to extend the result to arbitrary functions of continuous Gaussian fields. For this we will repeatedly use the fact that the pointwise limit of a sequence of non-decreasing and convex functions on $[0,1]$ is also non-decreasing and convex. 

First we reduce to the case of bounded $G$ by taking the limit as $\tau \to \infty$ of the maps
\[ t \mapsto  \Cov[ G(f) \id_{|G(f)| \le \tau}, G(f^t) \id_{|G^t(f)| \le \tau}] \ , \quad t \in [0,1], \]
which converge pointwise to $t \mapsto \Cov[G(f), G(f^t)]$ since $G(f), G(f^t) \in L^2$.

Next we reduce to the finite-dimensional case. Let $D_n$ be a finite subset of $D$ such that $D_n\subseteq D_{n+1}$ and $\cup_{n\in\N}D_n$ is dense in $D$. We then define the random variables
\[ G_n = \E \big[ G(f)   \, | \, f|_{D_n} \big] \quad \text{and} \quad  G^t_n = \E \big[ G(f^t)  \, |\, f^t|_{D_n} \big]  .\]
Since $(G_n)_{n \in \N}$ (resp.\ $(G^t_n)_{n \in \N}$) is a bounded martingale, by martingale convergence there exists a random variable $G_\infty$ (resp.\ $G^t_\infty$) such that $G_n \to G_\infty$ (resp.\ $G^t_n \to G^t_\infty$) almost surely and in~$L^2$. By the continuity of $f$, $G_\infty = G(f)$ and $G^t_\infty = G(f^t)$ almost surely, and we conclude that as $n \to \infty$ the maps
\[ t \mapsto  \Cov[ G_n, G^t_n ] \ , \quad t \in [0,1], \]
converge pointwise to $t \mapsto \Cov[G(f), G(f^t)]$. 

Finally we reduce to the case of smooth $G : \R^n \to R$ with bounded derivatives of all orders by defining, for $\eps > 0$, the mollification 
\[ G_\eps(x) = \E[ G(x + \eps \bar{X} ) ] = (G \ast \varphi_\eps)(x) , \]
where $\bar{X}$ is an independent i.i.d.\ standard Gaussian vector, $\varphi_\eps$ is the density of $\eps \bar{X}$, and $\ast$ denotes convolution. Since $\varphi_\eps$ is smooth with derivatives in $L^1(\R^n)$ and $G$ is bounded, by Young's convolution inequality $G_\eps$ is smooth with bounded derivatives of all order. Moreover, since $G$ is bounded, $G_\eps(x) \to G(x)$ for almost every $x$, and also $G_\eps(X) \to G(X)$ and $G_\eps(X^t) \to G(X^t)$ in $L^2$. Hence we deduce that, as $\eps \to 0$, the maps
\[  t \mapsto  \Cov[G_\eps(X) , G_\eps(X^t) ] \ , \quad t \in [0,1]  , \]
converge pointwise to $t \mapsto \Cov[G(X),G(X^t)]$, which completes the proof.
\end{proof}

\begin{remark}
 One can deduce from the formula \eqref{e:gcf2} that, in the finite-dimensional case, $t \mapsto \Cov[G(X), G(X^t)]$ is absolutely continuous if $G$ is absolutely continuous with $\|\nabla G(X)\|_2 \in L^2$, although unlike the monotonicity and convexity it is not clear that this property transfers to the continuum limit.
\end{remark}

\begin{remark}
The monotonicity properties in Proposition \ref{p:mon} are special to the case of the variance, i.e.\ it is not true in general that
\[ t \mapsto \Cov[G_1(X), G_2(X^t)] \ , \quad t \in [0,1] , \]
is non-decreasing and convex, even for finite-dimensional $X$.
\end{remark}

\begin{proof}[Proof of Theorem~\ref{t:vf}]
From the proof of Theorem~\ref{t:cf} (specifically \eqref{e:vfproof1}) in the case that $\star=\diamond$ and $B_1=B_2=\Lambda_R$, we see that $ t\mapsto\Cov[N,N^t]$ is absolutely continuous with derivative~\eqref{e:der}. By Proposition~\ref{p:mon} this map is also convex and non-decreasing, completing the proof of the theorem.
\end{proof}

\begin{proof}[Proof of Theorem \ref{t:vub}]
Recall that Corollary \ref{c:vf} states that
\[ \Var[N_\star(R)]   \le    \int_{0}^1  \int_{\Lambda_R \times \Lambda_R}  \! \! |K(x-y)|  d p_t(x,y)   \, dt   .  \] 
Summing over pairs of strata $F_1,F_2 \in \mathcal{F}$, partitioning the integral over $(x,y) \in F_1 \times F_2$ according to whether or not $|x-y| \le 1$, and applying Proposition \ref{p:qcb}, the above is at most
\begin{multline*}
\sum_{F_1, F_2 \in \mathcal{F}}  K(0)\int_{0}^1  \int_{\substack{(x,y) \in F_1 \times F_2 \\ |x-y| \le 1 }}  I_t(x,y) \,  dv_{F_1}(x) dv_{F_2}(y) \, dt  \\
+    \int_{0}^1  \int_{\substack{(x,y) \in F_1 \times F_2 \\ |x-y| \ge 1 }}   |K(x-y)| I_t(x,y)  \, dv_{F_1}(x) dv_{F_2}(y)  \,  dt \\
\leq \sum_{F_1, F_2 \in \mathcal{F}}  K(0)\int_{x\in F_1}c\,  dv_{F_1}(x) +    \int_{(x,y) \in F_1 \times F_2}   c\lvert K(x-y)\rvert\, dv_{F_1}(x) dv_{F_2}(y) \\
\le c_1 \Big(\max\{1,R^d\}+  \sum_{F_1, F_2 \in \mathcal{F}} \int_{F_1 \times F_2} |K(x-y)| dv_{F_1}(x) dv_{F_2}(y)  \Big)  ,
\end{multline*}
for a constant $c_1 > 0$. Moreover there is a $c_d > 0$ depending only on the dimension such that, for all $R \ge 1$,
\begin{displaymath}
    \sum_{F_1, F_2 \in \mathcal{F}} \int_{F_1 \times F_2} |K(x-y)| dv_{F_1}(x) dv_{F_2}(y) \le c_d \int_{\Lambda_R \times \Lambda_R} |\widetilde{K}(x-y)| dx dy  \le c_d R^d \int_{\Lambda_{2R}} |\widetilde{K}(x)| dx .
\end{displaymath}
Combining the above gives the result.
\end{proof}

\subsection{Asymptotic formula}\label{ss:Asymptotic}
In this subsection we prove the asymptotic formula in Theorem~\ref{t:Asymptotic}. We begin by defining stationary versions $\hat{I}_t^{\pm \pm}$ of the pivotal intensities $I_t^{\pm \pm}$ introduced in \eqref{e:pivint} above; the key difference will be that, unlike the latter, the former will be independent of the box $\Lambda_R$.

Recall from Lemma~\ref{l:change} that each critical point in the interior of a box $B$ can be classified as either positively, negatively, or not pivotal, and by Lemma~\ref{l:AsympStab} this classification is the same for all sufficiently large boxes. We shall need this statement for the Gaussian fields $f$ and $f^t$ under the conditioning \eqref{e:cond}:

\begin{lemma}
\label{l:asymptotic}
Assume that $K\in L^1(\R^d)$. Let $x,y \in \R$ and $t \in [0,1)$. Then with probability one, for any realisation of
\begin{equation}\label{e:CondField}
(f,f^t\;|\; f(x)=f^t(y)=\ell,\nabla f(x)=\nabla f^t(y)=0)
\end{equation}
the following holds: there exists $r_x,r_y>0$ and $\Delta^t_x,\Delta^t_y\in\{+,-,0\}$ such that for all boxes $B$ containing $x+\Lambda_{r_x}$ and $y+\Lambda_{r_y}$, $x$ is $(\Delta^t_x,B)$-pivotal for $f$ and $y$ is $(\Delta^t_y,B)$-pivotal for $f^t$.
\end{lemma}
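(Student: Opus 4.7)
The plan is to reduce this statement to a direct application of Lemma~\ref{l:AsympStab} to the sample paths of $f$ and $f^t$ separately under the conditioning \eqref{e:CondField}. That lemma requires, for each field, $C^2$-smoothness on $\R^d$ together with a non-degenerate critical point at the prescribed location at level $\ell$; granting these, it delivers the radii $r_x, r_y$ and the consistent pivotal signs $\Delta_x^t, \Delta_y^t$ claimed in the statement.

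The critical-point and level conditions hold by construction, since the conditioning fixes $f(x) = f^t(y) = \ell$ and $\nabla f(x) = \nabla f^t(y) = 0$. For $C^2$-smoothness, I would invoke the Gaussian regression formula to express each conditioned field as the sum of a deterministic mean (a smooth linear combination of $K$ and its derivatives at the conditioning points) and an independent centred Gaussian field whose covariance inherits the regularity of $K$; Assumption~\ref{a:gen}(1) then yields $C^2$-smoothness almost surely. The substantive step is to verify that $\det \nabla^2 f(x) \neq 0$ and $\det \nabla^2 f^t(y) \neq 0$ almost surely under the conditioning. This reduces to showing that the Gaussian vector
\[ V := (f(x), f^t(y), \nabla f(x), \nabla f^t(y), \nabla^2 f(x), \nabla^2 f^t(y)) \]
is non-degenerate, since Gaussian regression then produces an absolutely continuous conditional distribution for the pair of Hessians given the first four blocks of $V$, so the singular Hessian matrices form a conditional null set.

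The non-degeneracy of $V$ is the main obstacle, because it has to cover the coincident case $x = y$, where the two fields are evaluated at a shared location. For $x$ and $y$ well-separated, this is standard given Assumption~\ref{a:gen}(2)--(3) and the perspective of Remark~\ref{r:nondegen}. To handle arbitrary $x, y$, including $x = y$, I would extend the argument used to prove Lemma~\ref{l:nondegen2}, exploiting two facts: first, since $t \in [0,1)$ and $f^t = t f + \sqrt{1-t^2}\tilde{f}$ with $\tilde{f}$ an independent copy, the fields $f$ and $f^t$ are linearly independent, so their derivatives up to order two cannot satisfy joint linear relations forced by the $f$--$f^t$ coupling; second, Assumption~\ref{a:gen}(3) on the spectral support rules out any remaining linear relation among derivatives of $f$ alone (and symmetrically for $\tilde{f}$) up to order two at a single point.

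Once $V$ is shown to be non-degenerate, the two applications of Lemma~\ref{l:AsympStab} are essentially mechanical: one to the realisation of $f$ at its critical point $x$, yielding $r_x > 0$ and $\Delta_x^t \in \{+,-,0\}$; and independently one to the realisation of $f^t$ at $y$, yielding $r_y > 0$ and $\Delta_y^t \in \{+,-,0\}$. Any box $B$ containing $x + \Lambda_{r_x}$ and $y + \Lambda_{r_y}$ contains each neighbourhood separately, so $x$ is $(\Delta_x^t, B)$-pivotal for $f$ and $y$ is $(\Delta_y^t, B)$-pivotal for $f^t$, which is exactly the conclusion.
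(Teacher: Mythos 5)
Your overall plan matches the paper's: reduce the lemma to the almost-sure non-degeneracy of the stratified critical points of the conditioned pair $(f, f^t)$ at $x$ and $y$, and then invoke Lemma~\ref{l:AsympStab} separately for each field. The reduction to the non-degeneracy of a Gaussian vector via Gaussian regression is also the right idea. However, there is a genuine gap at the step where you justify non-degeneracy.

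You claim that ``Assumption~\ref{a:gen}(3) on the spectral support rules out any remaining linear relation among derivatives of $f$ alone \ldots\ up to order two at a single point.'' This is false in case (ii) of Assumption~\ref{a:gen}(3). If the spectral measure is supported on a scaled sphere $a\mathbb{S}^{d-1}$ (the monochromatic random wave case), then the Helmholtz identity $\Delta f + a^2 f = 0$ holds identically, so $(f(0),\nabla f(0),\nabla^2 f(0))$ is degenerate; this is precisely the degenerate example flagged in the remark following Lemma~\ref{l:nondegen1}. Consequently your proposed vector $V$ need not be non-degenerate under Assumption~\ref{a:gen} alone, and the argument stalls. The fix---and the actual role of the hypothesis $K \in L^1(\R^d)$, which your write-up never uses---is that integrability of $K$ forces the spectral measure to have a continuous density, so its support contains an open set; one is then in case (i) of Lemma~\ref{l:nondegen1}, which provides the non-degeneracy of $(f(x),\nabla f(x),\nabla^2 f(x))$ (and the analogous vector for $\tilde f$ at $y$), after which Lemma~\ref{l:nondegen2} already gives the joint non-degeneracy needed, without any fresh ``extension of the argument for Lemma~\ref{l:nondegen2}.'' A smaller remark: the paper does not prove that the full vector $V$ is non-degenerate, only that each Hessian has a non-degenerate conditional law given $(f(x),f^t(y),\nabla f(x),\nabla f^t(y))$, which is enough and slightly easier to obtain from Lemmas~\ref{l:nondegen1} and~\ref{l:nondegen2} applied once per Hessian.
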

\begin{proof}
By Lemma~\ref{l:AsympStab} it is enough to show that the critical points of $f$ and $f^t$ at $x$ and $y$ respectively are almost surely non-degenerate. Since $K \in L^1(\R^d)$, the spectral measure $\mu$ has a density, and in particular its support contains an open set. Hence, by Lemmas \ref{l:nondegen1} and \ref{l:nondegen2}, the Gaussian vector
\[ (\nabla^2f(x) \;|\; f(x)=f^t(y)=\ell,\nabla f(x)=\nabla f^t(y)=0) \]
is non-degenerate. (Note that here we view $\nabla^2 f(x)$ as the $d(d+1)/2$-dimensional Gaussian vector $(\partial_{ij}f(x)\;|\;i\leq j)$ so as to avoid the trivial degeneracy $\partial_{ij}f(x)=\partial_{ji}f(x)$. This convention is standard in the literature on smooth Gaussian fields.) The determinant of a symmetric $d\times d$ matrix, viewed as a function of $d(d+1)/2$ variables, is a polynomial and so its zero-set has Lebesgue measure zero in $\R^{d(d+1)/2}$. Therefore with probability one
    \begin{displaymath}
    \P\left( | \nabla^2f(x)| =0\;|\;f(x)=f^t(y)=\ell,\nabla f(x)=\nabla f^t(y)=0\right)=0.
    \end{displaymath}
    The same argument applies if we replace $\nabla^2f(x)$ by $\nabla^2 f^t(y)$, completing the proof of the lemma.
\end{proof}

We define the \textit{stationary pivotal intensities} as
\[ \hat{I}_t^{\pm\pm}(x,y)= \hat{\varphi}_t(x,y) \widehat{\E} \big[ |\nabla^2 f(x) \nabla^2 f^t(y) | \ind_{(\Delta_x^t,\Delta_y^t)=(\pm\pm)}\big], \]
recalling that $\hat{\varphi}_t(x,y)$ denotes the density of the conditioning \eqref{e:cond2} and $\widehat{\E}$ denotes expectation under this conditioning.

As already mentioned, in the proof of Theorem~\ref{t:Asymptotic} we shall compare $\hat{I}_t^{\pm\pm}(x,y)$ and $I_t^{\pm \pm}(x,y)$ for $x,y$ lying in the bulk of $\Lambda_R$. To this end, given $s > 0$, $x,y \in \R^d$, a realisation of \eqref{e:CondField}, and $r_x^*,r_y^*$ defined as the infima of $r_x,r_y$ in Lemma~\ref{l:asymptotic}, denote
\begin{displaymath}
q_t(x,y,s)=\P(\max\{r_x,r_y\}>s).
\end{displaymath}
So roughly speaking this is the probability that boxes $\Lambda_s$, centred at $x$ and $y$ respectively, are not big enough to capture the change in topology at the pivotal points $x,y$. Clearly this is non-increasing in $s$ and, by Lemma~\ref{l:asymptotic}, tends to zero as $s\to\infty$ for each choice of $t,x,y$. Also, by stationarity, $q_t(x,y,s)=q_t(0,x-y,s)$. 

Let us now observe an equivalent definition of the pivotal intensities $I_t^{\pm \pm}(x,y)$ for $x,y$ in the interior of $\Lambda_R$. Given a realisation of \eqref{e:CondField} we write $\Delta_x^t(\Lambda_R)\in\{+,-,0\}$ for the symbol such that $x$ is a $(\Delta_x^t(\Lambda_R),\Lambda_R)$-critical point, and we define $\Delta_y^t(\Lambda_R)$ analogously. Then for $x,y$ in the interior of $\Lambda_R$ we can equivalently define $I_t^{\pm\pm}(x,y)$ as
\[   I_t^{\pm\pm}(x,y) = \hat{\varphi}_t(x,y) \widehat{\E} \big[ |\nabla^2 f(x) \nabla^2 f^t(y) | \ind_{(\Delta_x^t(\Lambda_R),\Delta_y^t(\Lambda_R))=(\pm\pm)}\big] . \]

Now suppose $ r \in (0,R)$ and let $x,y \in\Lambda_{R-r}$. Then, applying the Cauchy-Schwarz inequality to the definitions above,
\begin{equation}\label{e:Station_Comp}
\lvert I_t^{\pm\pm}(x,y)-\hat{I}_t^{\pm\pm}(x,y)\rvert\leq \hat{I}^{(2)}_t(x,y) q_t(x,y,r)^{1/2} = \hat{I}^{(2)}_t(0,x-y) q_t(0,x-y,r)^{1/2},
\end{equation}
where $\hat{I}^{(k)}_t$ is defined in \eqref{e:hatik}. 

\begin{proof}[Proof of Theorem~\ref{t:Asymptotic}]
In the proof $c > 0$ will denote constants that are independent of $R$ and may change from line to line.

First we show that the contribution from boundary strata to the variance formula in \eqref{e:vf} is of order $R^{d-1}$. Let $F_1,F_2$ be two strata of $\Lambda_R$ one of which has dimension at most $d-1$. Then by the fact that $I_t^{\pm\pm}\leq I_t$ and Proposition~\ref{p:qcb}
\begin{multline*}
\left\lvert\int_0^1\int_{F_1\times F_2}K(x-y)\sum_{i,j\in\{+,-\}}\sigma(i)\sigma(j) I_t^{ij}(x,y) \, dv_{F_1}(x)dv_{F_2}(y)dt\right\rvert\\
    \leq 4 K(0)\int_0^1\int_{F_1\times F_2}I_t(x,y) \, dv_{F_1}(x)dv_{F_2}(y)dt\leq cR^{d-1}.
\end{multline*}
Summing over all such pairs of strata, by \eqref{e:vf} we see that
\begin{equation}\label{e:Asymp1}
\Var[N_\star(R)]=\int_0^1\int_{\text{int}(\Lambda_R)^2}K(x-y)\sum_{i,j\in\{\pm\}}\sigma(i)\sigma(j) I_t^{ij}(x,y)\, dxdydt +O(R^{d-1})
\end{equation}
as $R\to\infty$, as claimed.

Next we choose $r=\sqrt{R}$ (in fact any $1\ll r\ll R$ would do) and argue that, in \eqref{e:Asymp1}, one can replace the domain of integration $\text{int}(\Lambda_R)^2$ by $\Lambda_{R-r}^2$ with an $o(R^d)$ error. Indeed using $I_t^{\pm\pm}\leq I_t$ and Proposition~\ref{p:qcb} once more,
\begin{align}\label{e:Asymp2}
& \left\lvert\int_0^1\int_{(\text{int}(\Lambda_R)\setminus\Lambda_{R-r}) \times \text{int}(\Lambda_R)}K(x-y)\sum_{i,j\in\{\pm\}}\sigma(i)\sigma(j) I_t^{ij}(x,y)\, dx dy dt\right\rvert\\
&  \nonumber \qquad \leq 4 K(0) \int_0^1\int_{\substack{ (\text{int}(\Lambda_R)\setminus\Lambda_{R-r}) \times \text{int}(\Lambda_R) \\ \lvert x-y\rvert\leq 1}} I_t(x,y)\, dx dy dt\\
&\nonumber\qquad\qquad\qquad\qquad\qquad\qquad+ c\int_0^1\int_{\substack{(\text{int}(\Lambda_R)\setminus\Lambda_{R-r}) \times \text{int}(\Lambda_R)\\ \lvert x-y\rvert \ge 1}}\lvert K(x-y)\rvert \, dx dy dt.
\end{align}
Note that, in the above expression, the critical point intensity $I_t(x,y)$ is equal to its stationary version $\hat{I}_t(x,y)$,  since $x,y$ are in the interior of $\Lambda_R$. Therefore by the change of variables $u=x-y$ and stationarity, \eqref{e:Asymp2} is bounded above by
\begin{displaymath}
4 K(0)\int_{\text{int}(\Lambda_R)\setminus\Lambda_{R-r}}\int_0^1\int_{\lvert u\rvert\leq 1} \hat{I}_t(0,u) \, du dt dx+c\int_{\text{int}(\Lambda_R)\setminus\Lambda_{R-r}}\int_{\R^d} \lvert K(u)\rvert \, du dx \leq c rR^{d-1}=o(R^d),
\end{displaymath}
where we used Proposition~\ref{p:qcb} and the fact that $K \in L^1(\R^d)$ for the final inequality. Combining with \eqref{e:Asymp1} we have shown that, as $R \to \infty$,
\begin{equation}
\label{e:Asymp1b}
\Var[N_\star(R)]=\int_0^1\int_{\Lambda_{R-r}^2}K(x-y)\sum_{i,j\in\{\pm\}}\sigma(i)\sigma(j) I_t^{ij}(x,y)\, dxdydt +o(R^d)
\end{equation}

A similar argument shows that, as $R \to \infty$,
\begin{align}
\label{e:Asymp1c}
& \int_0^1 \int_{\Lambda_{R-r} 
 \times \R^d}K(x-y)\sum_{i,j\in\{\pm\}}\sigma(i)\sigma(j) \hat{I}_t^{ij}(x,y)\, dx dt  \\
\nonumber & \qquad\qquad\qquad\qquad =  \int_0^1\int_{\Lambda_{R-r}^2}K(x-y)\sum_{i,j\in\{\pm\}}\sigma(i)\sigma(j) \hat{I}_t^{ij}(x,y)\, dxdydt +o(R^d).
\end{align}
Indeed using $\hat{I}_t^{\pm\pm}\leq \hat{I}_t$ and Proposition~\ref{p:qcb} again,
\begin{align}\label{e:Asymp3}
& \left\lvert\int_0^1\int_{\Lambda_{R-r} \times (\R^d \setminus \Lambda_{R-r})}K(x-y)\sum_{i,j\in\{\pm\}}\sigma(i)\sigma(j) \hat{I}_t^{ij}(x,y)\, dx dy dt\right\rvert\\
&  \nonumber \qquad  \leq 4 K(0) \int_0^1\int_{\substack{ \Lambda_{R-r} \times (\R^d \setminus \Lambda_{R-r}) \\ \lvert x-y\rvert\leq 1}} \hat{I}_t(x,y)\, dx dy dt + c\int_0^1\int_{\substack{\Lambda_{R-r} \times (\R^d \setminus \Lambda_{R-r})\\ \lvert x-y\rvert \ge 1}}\lvert K(x-y)\rvert \, dx dy dt.
\end{align}
By the change of variables $u=x-y$ and stationarity, \eqref{e:Asymp3} is bounded above by
\begin{align*}
& 4 K(0)\int_{ \Lambda_{R-r} \setminus \Lambda_{R-2r}}\int_0^1\int_{\lvert u\rvert\leq 1} \hat{I}_t(0,u) \, du dt dx \\
& \qquad\qquad\qquad\qquad +c \int_{(\Lambda_{R-r} \setminus \Lambda_{R-2r} ) \times \R^d }  \lvert K(x-y)\rvert \, dx dy   +c \int_{ \Lambda_{R-2r} \times ( \R^d \setminus \Lambda_{R-r}  ) }  \lvert K(x-y)\rvert \, dx dy   \\
& \qquad \qquad\qquad\qquad\qquad\qquad\qquad \le crR^{d-1} + c R^d \int_{|u| \ge r }  \lvert K(u)\rvert \, du  =o(R^d).
\end{align*}

It remains to replace, in \eqref{e:Asymp1b}, the pivotal intensities $I_t^{\pm \pm}$ with their stationary versions $\hat{I}_t^{\pm\pm}$ up to $o(R^d)$ error, so as to compare with \eqref{e:Asymp1c}. Using \eqref{e:Station_Comp},
\begin{align}
\label{e:Asymp4}
& \left\lvert\int_0^1\int_{\Lambda_{R-r}^2}K(x-y)\sum_{i,j\in\{\pm\}}\sigma(i)\sigma(j) (I_t^{ij}(x,y)-\hat{I}_t^{ij}(x,y))\, dx dy dt\right\rvert\\
\nonumber & \qquad\qquad\qquad
    \leq\int_0^1\int_{\Lambda_{R-r}^2}\lvert K(x-y)\rvert\sum_{i,j\in\{\pm\}}\lvert I_t^{ij}(x,y)-\hat{I}_t^{ij}(x,y)\rvert\, dx dy dt\\
  \nonumber  & \qquad \qquad \qquad\qquad\qquad   \leq\int_0^1\int_{\Lambda_{R-r}^2}\lvert K(x-y)\rvert \hat{I}^{(2)}_t(0,x-y)q_t(0,x-y,r)^{1/2}\, dx dy dt,
\end{align}
which is bounded above by
\[ R^{d}\int_0^1\int_{\R^d}\lvert K(u)\rvert \hat{I}_t^{(2)}(0,u)q_t(0,u,r)^{1/2}\, du dt .\]
We next claim the dominated convergence theorem implies that
\begin{displaymath}
    \int_0^1\int_{\R^d}\lvert K(u)\rvert \hat{I}_t^{(2)}(u,0)q_t(u,0,r)^{1/2}\, du dt\to 0
\end{displaymath}
as $R\to\infty$, from which we conclude that \eqref{e:Asymp4} is $o(R^d)$. Indeed since $q_t(u,0,r)\leq 1$ and $q_t(u,0,r)\to 0$ as $r\to\infty$, dominated convergence applies since
\begin{align*}
    \int_0^1\int_{\R^d}\lvert K(u)\rvert\hat{I}_t^{(2)}(0,u) \, dudt    \leq K(0)\int_0^1\int_{\lvert u\rvert\leq 1} \hat{I}_t^{(2)}(0,u) \, dudt+c\int_0^1\int_{\lvert u\rvert \ge 1}\lvert K(u)\rvert\, dudt<\infty,
\end{align*}
where we used Proposition~\ref{p:qcb2} and the integrability of $K$.

Combining the fact that \eqref{e:Asymp4} is $o(R^d)$ with \eqref{e:Asymp1b}--\eqref{e:Asymp1c}, and by stationarity, we obtain
\begin{align*}
\Var[N_\star(R)] & = \int_0^1 \int_{\Lambda_{R-r} 
 \times \R^d}K(x-y)\sum_{i,j\in\{\pm\}}\sigma(i)\sigma(j) \hat{I}_t^{ij}(x,y)\, dx dt +o(R^{d})  \\
& = R^d(1 - o(1) ) \int_0^1\int_{\R^d}K(x)\sum_{i,j\in\{\pm\}}\sigma(i)\sigma(j) \hat{I}_t^{ij}(0,x)\, dx dt + o(R^d) ,
\end{align*}
which completes the proof of \eqref{e:asymptotic} and \eqref{e:sigma}. 
\end{proof}

\section{Analysis of the critical point intensity}
\label{s:dbcp}

In this section we prove the integrability and boundedness of the critical point intensity $I_t$, stated in Proposition~\ref{p:qcb} above, as well as its generalisation in Proposition \ref{p:qcb2}.

\subsection{Reduction to pointwise bounds}
We first reduce the proof of Propositions~\ref{p:qcb} and~\ref{p:qcb2} to pointwise bounds on the two components whose product gives the intensity $I_t$. 

Let $\{e_i\}_{1 \le i \le d}$ denote the standard basis of $\R^d$, and for any non-empty ${I \subseteq \{1,2,\ldots,d\}}$, let $\nabla_I$ denote the gradient with respect to the basis directions  $\{e_i\}_{i \in I}$. For $I_1, I_2 \subseteq \{1,2,\ldots d\}$ and $(t,x) \in [0,1) \times \R^d$ we shall consider the Gaussian vector 
\begin{equation}
\label{e:cond3}
\big( f(0),f^t(x),\nabla_{I_1} f(0), \nabla_{I_2} f^t(x)  \big)  
\end{equation}
and the conditional expectation, for $\ell \in \R$ and $k \in \N$,
\begin{equation}
\label{e:hess3}
\E\big[ | \nabla^2_{I_1} f(0)   \nabla^2_{I_2} f^t(x)   |^k  \, \big| \, f(0) = f^t(x) = \ell, \nabla_{I_1}f(0) = \nabla_{I_2}f^t(x) = 0 \big]  .
\end{equation}
If $I_1$ is empty then by convention we remove the corresponding term $\nabla_{I_1}$ from \eqref{e:cond3} and the conditioning in \eqref{e:hess3}, and set $|\nabla^2_{I_1} f(0)| = 1$ in \eqref{e:hess3}, and similarly if $I_2$ is empty.

For a square-integrable random vector $X$, let $\DC(X) = | \textrm{Cov}[X] |$ denote the determinant of its covariance matrix.

The pointwise bounds are the following:
\begin{lemma}
\label{l:numbound}
There exists a $c > 0$ depending only on the field $f$ such that, for every $I_1,I_2 \subseteq \{1,2,\ldots,d\}$ and $(t,x) \in [0,1) \times \R^d$,
\[ \DC \big( f(0),f^t(x),\nabla_{I_1} f(0), \nabla_{I_2} f^t(x)  \big)  \ge \begin{cases} c & \text{if $|x| \ge 1$,} \\ c  \max\{(1-t)^{1/2}, |x| \}^{2d'} (1-t)  & \text{if $|x| \le 1$,} \end{cases} \]
where $d' = \textrm{dim}(I_1 \cap I_2)$. 
\end{lemma}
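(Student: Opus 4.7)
The plan is to split into the cases $|x| \ge 1$ and $|x| \le 1$. For $|x| \ge 1$, a uniform bound $\DC \ge c > 0$ follows in two steps. First, by Assumption~\ref{a:gen}(2) the partial derivatives $\partial^\alpha K$ decay at infinity, so for $|x|$ above some large $R$ the covariance matrix is within $\varepsilon$ (in operator norm) of the block-diagonal $\mathrm{diag}(\Cov(f(0), \nabla_{I_1} f(0)),\, \Cov(f^t(x), \nabla_{I_2} f^t(x)))$, whose determinant is bounded below by positive constants independent of $t$ by stationarity of $f^t$ and the standard non-degeneracy of the underlying Gaussian vector. Second, on the compact set $\{1 \le |x| \le R\} \times [0, 1]$, continuity of $\DC$ combined with pointwise non-degeneracy --- which persists at $t = 1$ because $|x| \ge 1 > 0$ keeps $f(0)$ and $f(x)$ distinct --- produces the uniform bound.

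For $|x| \le 1$ I would exploit the rank-loss structure at $(t, x) = (1, 0)$. Apply the $\DC$-preserving linear change of basis that replaces $f^t(x)$ with $f^t(x) - f(0)$ and each $\partial_i f^t(x)$ for $i \in I := I_1 \cap I_2$ with $\partial_i f^t(x) - \partial_i f(0)$ (well-defined since $\nabla_I f(0) \subseteq \nabla_{I_1} f(0)$ is present in the original vector). Writing the transformed vector as $(A^*, B^*)$ with stable block $A^* = (f(0), \nabla_{I_1} f(0), \nabla_{J_2} f^t(x))$, $J_2 := I_2 \setminus I$, and vanishing block $B^* = (f^t(x) - f(0), \nabla_I f^t(x) - \nabla_I f(0))$, Schur's identity gives $\DC = \det \Cov(A^*) \cdot \det \Cov(B^* \mid A^*)$. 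A continuity argument anchored at the non-degenerate limit $(f(0), \nabla_{I_1 \cup J_2} f(0))$ at $(t, x) = (1, 0)$ shows $\det \Cov(A^*) \ge c > 0$, so it remains to prove $\det \Cov(B^* \mid A^*) \ge c (1-t) \max\{(1-t)^{1/2}, |x|\}^{2d'}$.

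For this I would use Cholesky decomposition with the $d'$-dimensional block $\nabla_I f^t(x) - \nabla_I f(0)$ first and the scalar $f^t(x) - f(0)$ last. The terminal scalar factor is $\ge c(1-t)$ by the ``$\tilde f$-residual trick'': writing $f^t = tf + \sqrt{1-t^2}\,\tilde f$, the entire $\tilde f$-content of the conditioning is $\nabla_{I_2} \tilde f(x)$, and $\Var[\tilde f(x) \mid \nabla_{I_2} \tilde f(x)] \ge c$ by standard non-degeneracy, so the $\sqrt{1-t^2}\,\tilde f(x)$ piece in $f^t(x) - f(0)$ contributes conditional variance $\gtrsim (1-t)$. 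For the leading factor $\det \Var[\nabla_I f^t(x) - \nabla_I f(0) \mid A^*]$, I decompose the vector into (i) the $f$-piece $t(\nabla_I f(x) - \nabla_I f(0))$; (ii) the projection of $\sqrt{1-t^2}\,\nabla_I \tilde f(x)$ onto $\nabla_{J_2} \tilde f(x)$, which is absorbed into the $A^*$-mean after rewriting via $\nabla_{J_2} f^t(x) \in A^*$; and (iii) the orthogonal $\tilde f$-residual, which is independent of $A^*$ with covariance $(1-t^2)\Sigma$ for a positive definite constant $\Sigma$ (by stationarity and $I \cap J_2 = \emptyset$). Minkowski's determinant inequality $\det(M_1 + M_2) \ge \det M_1 + \det M_2$ for positive semidefinite $M_1, M_2$ then combines the $(1-t)^{d'}$ bound from (iii) with a $|x|^{2d'}$ bound from (i)--(ii), obtained by Taylor expanding $\nabla_I f(x) - \nabla_I f(0) = \sum_k x_k\, \partial^2_{I,k} f(0) + O(|x|^2)$ and invoking the non-degeneracy of the directional second-derivative vector $(\partial^2_{i, \hat x} f(0))_{i \in I}$ conditional on $A^*$, yielding $c\max\{(1-t)^{1/2}, |x|\}^{2d'}$.

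The main obstacle is the Taylor-expansion step when $J_2 \ne \emptyset$: the conditioning on $\nabla_{J_2} f^t(x) \in A^*$ provides only partial information about $\nabla_{J_2} f(x)$, so after absorbing the $\tilde f$-projection in (ii) one must bound the conditional determinant of the modified object $\nabla_I f(x) - L\, \nabla_{J_2} f(x)$ for a specific linear map $L$ determined by the $\tilde f$-cross-covariances. The observation that carries the argument through is that the functionals $y \mapsto \sum_{i \in I} y_i (\partial^2_{i,\hat x} f(0) - \sum_{j \in J_2} L_{ij}\, \partial^2_{j, \hat x} f(0))$ remain linearly independent in $y \in \R^{d'}$ for every unit $\hat x$; this reduces to a symbolic calculation exploiting the symmetric-matrix structure of $\nabla^2 f(0)$ combined with the non-degeneracy of second derivatives from Assumption~\ref{a:gen}(3).
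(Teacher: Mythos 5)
Your proposal follows the same high-level architecture as the paper's proof: a compactness/limiting argument for $|x|\geq 1$, and for $|x|\leq 1$ the identical Schur/Cholesky decomposition into a ``stable block'' $A^*=(f(0),\nabla_{I_1}f(0),\nabla_{J_2}f^t(x))$ and a ``vanishing block'' $B^*=(f^t(x)-f(0),\nabla_I f^t(x)-\nabla_I f(0))$, ordered with the gradient block first and the scalar last. The three factors you estimate correspond exactly to the paper's $F_1,F_2,F_3$, and your scalar bound ($\tilde f$-residual trick) is the paper's claim~(d) verbatim. The difference is in how you handle the middle factor. The paper obtains the two rates $(1-t)^{d'}$ and $|x|^{2d'}$ by two separate conditionings (on all of $f$, respectively on all of $\tilde f$) and takes the \emph{maximum}; you instead split $\nabla_I\tilde f(x)$ into its regression on $\nabla_{J_2}\tilde f(x)$ plus an independent residual and use Minkowski's determinant inequality to \emph{add} the two contributions. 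The two routes are equivalent up to constants, but your split introduces the regression map $L$ and an extra term that the paper's approach avoids entirely.

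Two places in the proposal are imprecise enough that I'd call them gaps if not repaired. First, the projection in step (ii) is \emph{not} absorbed into the $A^*$-mean: writing $\sqrt{1-t^2}\,\nabla_{J_2}\tilde f(x)=\nabla_{J_2}f^t(x)-t\nabla_{J_2}f(x)$, the first summand is $A^*$-measurable but the second is not, leaving the residual $-tL\nabla_{J_2}f(x)$ you acknowledge as ``the obstacle''. The cleanest fix, which you do not state, is to further condition the $f$-measurable piece on $\nabla_{J_2}f(x)$ (conditioning on more only decreases the determinant, so this is valid for a lower bound); that makes $L\nabla_{J_2}f(x)$ constant and reduces the problem to $\det\Cov\bigl(\nabla_I f(x)-\nabla_I f(0)\,\big|\,f(0),\nabla_{I_1}f(0),\nabla_{J_2}f(x)\bigr)$, which is exactly what the paper's divided-difference Lemma~\ref{l:dd} handles --- and is exactly what the paper gets directly by conditioning on all of $\tilde f$, which collapses $\nabla_{J_2}f^t(x)$ to $\nabla_{J_2}f(x)$ and bypasses $L$ entirely. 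Your route via the modified object $\nabla_I f(x)-L\nabla_{J_2}f(x)$ is harder: that vector has a nonvanishing constant term $\nabla_I f(0)-L\nabla_{J_2}f(0)$ whose conditional variance given $A^*$ scales like $(1-t)$, not $|x|^2$, so the Taylor expansion and the $(1-t)^{d'}$ residual are entangled rather than cleanly separated as your Minkowski step assumes.

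Second, the claim that the uniform lower bound on the divided-difference determinant ``reduces to a symbolic calculation exploiting the symmetric-matrix structure of $\nabla^2 f(0)$'' undersells what is needed. The linear independence of the directional second-derivative combinations is a statement about non-degeneracy of a Gaussian vector involving second derivatives of $f$, which must be deduced from the spectral support condition in Assumption~\ref{a:gen}(3) via the argument in Lemma~\ref{l:nondegen1} (this is where the non-spherical multi-index structure matters), and the \emph{uniformity over the unit vector $\hat x$ and over $x$ near $0$} requires the compactness/subsequence contradiction that occupies the entire proof of Lemma~\ref{l:dd}. Pointwise non-degeneracy for each fixed $\hat x$ is not enough; you need the difference quotients $\frac{\nabla_{I_3}f(x)-\nabla_{I_3}f(0)}{|x|}$ to converge in $L^2$ to $\partial_v\nabla_{I_3}f(0)$ along subsequences, and the $\DC$ functional to pass to the limit (item~(2) of Lemma~\ref{l:dc}). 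These are the ingredients the paper assembles explicitly; a symbolic matrix manipulation will not supply them.
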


\begin{lemma}
\label{l:denombound}
Let $k \in \N$. Then there exists $c > 0$  depending only on the field $f$ and $k$, such that, for every $\ell \in \R$, $I_1,I_2 \subseteq \{1,2,\ldots,d\}$, and $(t,x) \in [0,1) \times \R^d\setminus\{0\}$,
\[  \E\big[ | \nabla^2_{I_1} f(0)   \nabla^2_{I_2} f^t(x)   |^k  \, \big| \, f(0) = f^t(x) = \ell, \nabla_{I_1}f(0) = \nabla_{I_2}f^t(x) = 0 \big]    \le c (1+\ell^{2dk}) . \]
\end{lemma}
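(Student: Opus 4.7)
The conditioning is well-defined by Lemma~\ref{l:nondegen2}; under it, all entries of the two Hessian submatrices $\nabla^2_{I_1}f(0)$ and $\nabla^2_{I_2}f^t(x)$ are jointly Gaussian. Standard Gaussian regression gives, entrywise, that the conditional variance is bounded above by the unconditional variance, and the latter is uniformly bounded in $(t,x)$ by a constant depending only on $f$ since all fourth derivatives of $K$ at the origin are finite by Assumption~\ref{a:gen}(i). The conditional mean is linear in the conditioning vector $y=(\ell,\ell,0,0)$, so it equals $\ell\cdot B_H(t,x)$ for each Hessian entry $H$, where $B_H(t,x)=\Sigma_{HY}(t,x)\,\Sigma_{YY}(t,x)^{-1}(1,1,0,0)^\top$. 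The key task is then to show $|B_H(t,x)|\le c$ uniformly in $(t,x)\in[0,1)\times(\R^d\setminus\{0\})$.

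To do this I would partition $Y=(U,V)$ with $U=(f(0),f^t(x))$ and $V=(\nabla_{I_1}f(0),\nabla_{I_2}f^t(x))$, and apply the Schur complement identity for $\Sigma_{YY}^{-1}$, reducing the claim to a uniform bound on $\tilde\Sigma^{-1}(1,1)^\top$, where $\tilde\Sigma$ is the conditional covariance matrix of $U$ given $V$. Writing $\tilde\Sigma$ as the $2\times 2$ matrix with diagonal entries $a,b$ and off-diagonal entry $c$, a direct computation gives
\[
\tilde\Sigma^{-1}(1,1)^\top=\frac{1}{ab-c^2}\,(b-c,\,a-c)^\top.
\]
The principal obstacle is that $ab-c^2$ can vanish (for instance as $(t,x)\to(1,0)$, in which limit $\tilde\Sigma\to K(0)\,(1,1)^\top(1,1)$). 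In every such degenerate limit, however, the singular direction of $\tilde\Sigma$ is the \emph{difference} direction $(1,-1)^\top$, orthogonal to the conditioning direction $(1,1)^\top$; equivalently $b-c$ and $a-c$ vanish at least as fast as $ab-c^2$, so the displayed ratio remains bounded. Uniform control away from the singular locus is provided by the covariance decay in Assumption~\ref{a:gen}(ii) combined with a compactness argument on the parameter space.

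Given these mean and variance bounds, each Hessian entry $H$ of either submatrix satisfies $\E[|H|^m\mid\cdot]\le c_m(1+|\ell|^m)$ for every $m\ge 0$ by standard Gaussian moment estimates. Each determinant is a Leibniz-sum of at most $d!$ signed products of at most $d$ Hessian entries, so $|\nabla^2_{I_1}f(0)\,\nabla^2_{I_2}f^t(x)|^k$ is a polynomial of total degree at most $2dk$ in the Hessian entries of the two matrices. A single application of H\"older's inequality with each factor raised to the $2dk$-th power then yields
\[
\E\big[|\nabla^2_{I_1}f(0)\,\nabla^2_{I_2}f^t(x)|^k\,\big|\,\cdot\big]\le c\,\max_H\E[|H|^{2dk}\mid\cdot]\le c(1+|\ell|^{2dk}),
\]
which is the claimed bound.
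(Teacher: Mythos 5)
Your high-level reduction is the same as the paper's: expand the determinants, reduce via H\"{o}lder to moments of individual Hessian entries, bound the conditional variance by the unconditional variance (Lemma~\ref{l:dc}, item 3), and control the coefficient of $\ell$ in the conditional mean. Your identification of $B_H$ with $w\cdot\tilde\Sigma^{-1}(1,1)^\top$ for $w=\Cov(H,U\mid V)$ is also correct and is (after a change of variables) the same quantity the paper calls $s_1/s_2$ in Lemma~\ref{l:unifbound}, with $(1,1)\tilde\Sigma^{-1}(1,1)^\top = 1/s_2$ and $s_2 = \Var[f(0)\mid f^t(x)-f(0),V]$.

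The gap is in the claim that since the degenerate direction of the limit $\tilde\Sigma\to K(0)(1,1)^\top(1,1)$ is $(1,-1)^\top$, ``equivalently $b-c$ and $a-c$ vanish at least as fast as $ab-c^2$.'' This is a non-sequitur: a generic sequence of positive-definite matrices converging to $K(0)(1,1)^\top(1,1)$ need \emph{not} satisfy this (take, e.g., $a_n=1-1/n+1/n^2$, $b_n=1+1/n$, $c_n=1$, which is PSD with $ab-c^2=1/n^3$ but $b-c=1/n$, so the ratio blows up). The kernel being $(1,-1)^\top$ in the limit does not control the rate at which $\tilde\Sigma_n$ aligns with it, and indeed $a\neq b$ in general (symmetry fails whenever $I_1\neq I_2$), so the eigendirection is not exactly $(1,-1)^\top$ along the approach. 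What is actually needed is the quantitative lower bound $s_2=\Var[f(0)\mid f^t(x)-f(0),\nabla f(0),\nabla f^t(x)]\geq c>0$ uniformly on $[0,1)\times(\R^d\setminus\{0\})$; since this domain is not compact (open at $t=1$ and at $x=0$, and unbounded in $x$), your appeal to ``a compactness argument'' does not close this. In the paper this is exactly Lemma~\ref{l:unifbound}, whose hard case near $(x,t)=(0,1)$ is Lemma~\ref{l:dd2}: a genuine multi-scale Taylor/divided-difference expansion of $f^t(x)-f(0)$ in both $x$ and $\sqrt{1-t^2}$, relying on the non-degeneracy of $(f(0),\nabla f(0),\partial_v\nabla f(0),\partial_v^3 f(0))$ guaranteed by Assumption~\ref{a:gen}(iii) via Lemma~\ref{l:nondegen1}. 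That lemma is the substantive content your argument is missing.
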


\begin{remark}
\label{r:numbound}
Lemma \ref{l:numbound} is a quantification of our previous claim that
\[ \big( f(x),f^t(y),\nabla_{F_x} f(x), \nabla_{F_y} f^t(y)  \big) \]
is non-degenerate for all $t \in [0,1)$, $x \in B_1$, and $y \in B_2$, and in particular the intensity $I_t(x,y)$ is well-defined. In Lemma~\ref{l:nondegen2} below we give a simpler non-quantitative proof of this fact.
\end{remark}

\begin{remark}
Although we only require Lemma \ref{l:denombound} for fixed $\ell \in \R$, we include the dependency since we believe it may prove useful in future work.
\end{remark}

Let us complete the proof of Proposition \ref{p:qcb} and \ref{p:qcb2} given these pointwise bounds:

\begin{proof}[Proof of Proposition \ref{p:qcb}]
Recall that $\ell \in \R$ is fixed. In the proof $c > 0$ will denote constants which depend only on $f$ and $\ell$, and may change from line to line. 

Let $(F_1, F_2) \in \mathcal{F}^{(1,2)}$ be given, and let $I_1$ and $I_2$ denote the indices of the basis directions that span $F_1$ and $F_2$ respectively, with $d_i = \text{dim}(I_i)$. Observe that, by stationarity, for every $(t,x,y) \in [0,1) \times F_1 \times F_2$,
\[ \big( f(x),f^t(y),\nabla_{F_x} f(x), \nabla_{F_y} f^t(y)  \big) \stackrel{d}{=} \big( f(0),f^t(u),\nabla_{I_1} f(0), \nabla_{I_2} f^t(u)  \big)  ,\]
where $u = y-x$.

Noting the trivial bound 
\[ \varphi_t(0,u) \le (2 \pi)^{-(d_1+d_2+2)/2}  \DC \big( f(0),f^t(u),\nabla_{I_1} f(0), \nabla_{I_2} f^t(u)  \big)^{-1/2} \]
on the Gaussian density, we see that
\begin{align}
 \label{e:itbound}
 I_t(x,y) & \le c \times \DC \big( f(0),f^t(u),\nabla_{I_1} f(0), \nabla_{I_2} f^t(u)  \big)^{-1/2}     \\
& \nonumber \qquad \qquad \times  \E\big[ | \nabla^2_{I_1} f(0)   \nabla^2_{I_2} f^t(u)   |  \, \big| \, f(0) = f^t(u) = \ell, \nabla_{I_1}f(0) = \nabla_{I_2}f^t(u) = 0 \big] .  
\end{align}
 Combining this with Lemmas \ref{l:numbound} and \ref{l:denombound}, we immediately obtain the uniform bound on $I_t(x,y)$ for $|y-x| \ge 1$ stated in \eqref{e:qcb2}.

It remains to prove the integrability in \eqref{e:qcb1}. Let $L_I$ denote the linear span of the basis directions $\{e_i\}_{i \in I}$ equipped with its Lebesgue measure $dv_I$; if $I$ is empty then we take $L_I$ as the origin and $dv_I$ a Delta mass at the origin. By \eqref{e:itbound}, Lemmas \ref{l:numbound} and \ref{l:denombound}, for every $t \in [0,1)$ and $x \in F_1$ we have
\begin{align}
 \nonumber\int_{\substack{y \in F_2 \\ |y-x| \le 1 }} \!  I_t(x,y) \, dv_{F_y}(y) & \le c \int_{\substack{y \in F_2 \\ |y-x| \le 1 }} \! \big( \max\{ (1-t)^{1/2}, |y-x| \}^{2 \min\{d_1,d_2\} } (1-t) \big)^{-1/2} \, dv_{F_y}(y)  \\
 \label{e:itbound2} &  \qquad \le c \int_{\substack{u \in L_{I_2} \\  |u| \le 1 }} \!  \max\{ (1-t)^{1/2}, |u| \}^{-d_2} (1-t)^{-1/2}   \, dv_{I_2}(u) 
 \end{align}
where in the last inequality we used the triangle inequality and the fact that the integrand is non-increasing in $|y-x|$. It remains to show that \eqref{e:itbound2} is integrable over $t \in [0,1)$. For this we first make the substitution $t \mapsto 1-t$, and then split the integral depending on whether $|u| \le t^{1/2}$ or $|u| \ge t^{1/2}$. In the first case we have
\[   \int_{  |u| \le t^{1/2}  } \! \max\{ t^{1/2}, |u| \}^{-d_2} t^{-1/2}  \, dv_{I_2}(u)  =   t^{-d_2/2-1/2}  \int_{|u| \le t^{1/2}}  \, dv_{I_2}(u)   = c  t^{- 1/2 } , \]
whereas in the second case we have
\begin{align*}
\int_{  |u| \ge t^{1/2}  } \! \max\{ t^{1/2}, |u| \}^{-d_2} t^{-1/2}  \, dv_{I_2}(u)  & = t^{-1/2} \int_{  |u| \ge t^{1/2}  } \! |u|^{-d_2}   \, dv_{I_2}(u)  \\
& = \begin{cases}   ct^{-1/2} \int_{t^{1/2}}^1 \! r^{-1} \, dr  &  \text{if } d_2 \ge 1 \\ ct^{-1/2} &  \text{if }  d_2 = 0 \end{cases}
 \\
 &\le   c     t^{-1/2} \log(1/t)  ,
 \end{align*}
 where we integrated in polar coordinates in the second equality in the case $d_2 \ge 1$. Since $  t^{-1/2} \log(1/t) $ is integrable over $t \in [0,1]$, we are done.
\end{proof}

\begin{proof}[Proof of Proposition \ref{p:qcb2}]
The proof is identical to that of Proposition \ref{p:qcb}, except that we apply the general case $k \in \N$ of Lemma \ref{l:denombound}.
\end{proof}

\subsection{Proof of the pointwise bounds}

We collect some properties of the $\DC$ operator:
 
\begin{lemma}
\label{l:dc}
Let $(X,Y)$ be a Gaussian vector of dimension $m_1+m_2$.
\begin{enumerate}
    \item For $A \in \R^{m_1 \times m_1}$, 
\[  \DC(AX) = \det(A)^2\DC(X). \]
In particular, for $a \in \R$ and $B \in \R^{m_1 \times m_2}$,
\[ \DC(aX,Y)=a^{2m_1}\DC(X,Y) \quad \text{and} \quad  \DC(X+BY,Y)=\DC(X,Y) . \]
    \item If $X_n$ is a sequence converging to $X$ in $L^2$, then $\DC(X_n)\to\DC(X)$.
     \item If $X$ is non-degenerate then $\DC(Y) \ge \DC(Y|X) = \DC(Y,X)/ \DC(X)  $.
    \item  Let $\widetilde{Y}$ be an independent copy of $Y$, and define $Y^t = t Y + \sqrt{1 - t^2} \widetilde{Y} $, $t \in [0,1]$. If $X$ is non-degenerate, then $\DC(X,Y^t) \ge  \DC(X,Y^1) = \DC(X,Y) $.
\end{enumerate}
\end{lemma}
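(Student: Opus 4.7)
The plan is to reduce all four items to two classical facts about positive-semidefinite (PSD) matrices: the Schur complement identity for the determinant of a block matrix, and Loewner monotonicity of the determinant (i.e.\ $A\succeq B\succeq 0$ implies $\det A \ge \det B$). Throughout, I write $\Sigma_{XX},\Sigma_{XY}=\Sigma_{YX}^\top,\Sigma_{YY}$ for the blocks of $\Cov(X,Y)$.

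Parts (1) and (2) are linear-algebraic warm-ups. For (1), $\Cov(AX)=A\,\Cov(X)\,A^\top$ together with multiplicativity of the determinant yields the main identity; the first corollary is the special case $A=\mathrm{diag}(aI_{m_1},I_{m_2})$, and $\DC(X+BY,Y)=\DC(X,Y)$ is the case $A = \begin{pmatrix} I & B \\ 0 & I \end{pmatrix}$, which has determinant one. For (2), $L^2$ convergence $X_n\to X$ yields entrywise convergence of the covariance matrices (by two applications of Cauchy--Schwarz), and the determinant is a continuous polynomial in those entries.

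For (3), the Schur complement formula applied to $\Cov(X,Y)$ gives
\[ \DC(X,Y) \;=\; \DC(Y)\,\det\!\bigl(\Sigma_{XX} - \Sigma_{XY}\Sigma_{YY}^{-1}\Sigma_{YX}\bigr), \]
and the bracketed matrix is precisely the conditional covariance $\Cov(X\,|\,Y)$ when $Y$ is non-degenerate; this yields the stated equality. The inequality $\DC(X)\ge\DC(X\,|\,Y)$ then follows from Loewner monotonicity of the determinant applied to $\Sigma_{XX}\succeq \Sigma_{XX} - \Sigma_{XY}\Sigma_{YY}^{-1}\Sigma_{YX}$, where the subtracted term $\Sigma_{XY}\Sigma_{YY}^{-1}\Sigma_{YX}$ is PSD.

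For (4), using that $\widetilde Y$ is independent of the joint pair $(X,Y)$, a direct computation gives
\[ \Cov(X,Y^t) = \begin{pmatrix} \Sigma_{XX} & t\,\Sigma_{XY} \\ t\,\Sigma_{YX} & \Sigma_{YY} \end{pmatrix}, \]
and Schur-complementing against the $X$-block (valid since $X$ is non-degenerate) yields $\DC(X,Y^t) = \DC(X)\det(\Sigma_{YY} - t^2\,\Sigma_{YX}\Sigma_{XX}^{-1}\Sigma_{XY})$. Since $\Sigma_{YX}\Sigma_{XX}^{-1}\Sigma_{XY}\succeq 0$, the bracketed matrix is Loewner-nonincreasing in $t^2\in[0,1]$ and stays $\succeq\Cov(Y\,|\,X)\succeq 0$, so Loewner monotonicity of the determinant gives $\DC(X,Y^t)\ge\DC(X,Y^1)=\DC(X,Y)$. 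No substantive obstacle arises: everything is classical linear algebra, and the only subtlety is the implicit convention in (4) that \emph{independent copy of $Y$} means independent of the full pair $(X,Y)$, so that the off-diagonal blocks of $\Cov(X,Y^t)$ are truly $t\,\Sigma_{XY}$.
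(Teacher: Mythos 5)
Your proof is correct and follows essentially the same route as the paper's: Schur-complement factorisation for the block determinant identities in (3) and (4), together with monotonicity of the determinant under the Loewner order (which the paper phrases as "the determinant is non-decreasing in the positive-definite ordering"). You add a useful detail the paper leaves implicit, namely noting that $\Sigma_{YY}-t^2\Sigma_{YX}\Sigma_{XX}^{-1}\Sigma_{XY}\succeq\Cov(Y\mid X)\succeq 0$ for all $t\in[0,1]$, which is what licenses invoking Loewner monotonicity on the whole range.
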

\begin{remark}
    In the proof of this lemma and below, we write $\Cov[Y|X]$ to denote the covariance matrix of $Y$ conditioned on $X=x$ for an arbitrary $x\in\R^{m_1}$. By Gaussian regression (Section~\ref{ss:GaussianRegression}) the matrix does not depend on the value of $x$, which justifies the choice of notation.
\end{remark}
\begin{proof}
The first item is elementary, the second follows from the continuity of the determinant operator.

Let $\Sigma_X$ and $\Sigma_Y$ be the covariance matrices of $X$ and $Y$ respectively, and $\Sigma_{YX}$ the cross-covariance matrix. By Gaussian regression (Section~\ref{ss:GaussianRegression})
\begin{displaymath}
    \Cov[Y|X]=\Sigma_Y-\Sigma_{YX}\Sigma_X^{-1}\Sigma_{YX}^T.
\end{displaymath}
Since the determinant is non-decreasing in the positive definite ordering, the determinant of the above expression is at most $\DC(Y)$. The identity
\begin{displaymath}
    \begin{pmatrix}
    \Sigma_Y &\Sigma_{YX}\\
     \Sigma_{YX}^T&\Sigma_X
    \end{pmatrix}=\begin{pmatrix}
        I &\Sigma_{YX}\\
        0 &\Sigma_X
    \end{pmatrix}
    \begin{pmatrix}
        \Sigma_Y-\Sigma_{YX}\Sigma_X^{-1}\Sigma_{YX}^T &0\\
        \Sigma_X^{-1}\Sigma_{YX}^T & I
    \end{pmatrix}
\end{displaymath}
proves that $\DC(Y,X)=\DC(X)\DC(Y|X)$, verifying the third item.

By the definition of $Y^t$ and the analogue of the previous identity
\begin{displaymath}
    \DC(Y^t,X)=\DC\begin{pmatrix}
        \Sigma_Y &t\Sigma_{YX}\\
        t\Sigma_{YX}^T &\Sigma_X
    \end{pmatrix}=\det(\Sigma_X) \det \big(  \Sigma_Y - t^2 \Sigma_{YX} \Sigma_X^{-1} \Sigma_{YX}^T \big).
\end{displaymath}
Since the determinant is non-decreasing in the positive-definite ordering, 
\[ t \mapsto \det \big(  \Sigma_Y -t^2 \Sigma_{YX} \Sigma_X^{-1} \Sigma_{YX}^T \big) \]
is non-increasing on $t \in [0,1]$, and hence so is $t \mapsto \DC(Y^t,X)$.
\end{proof}

Let us establish two particular consequences of Lemma \ref{l:dc}. These can be viewed as examples of the `divided difference' method (see \cite[Section 4.2]{bmm23} for an overview).

\begin{lemma}
\label{l:dd}
Suppose $f$ is a $C^3$-smooth stationary Gaussian field, and suppose that, for every $I_1, I_2 \subseteq \{1,2,\ldots, d\}$ with $I_3 = I_1 \cap I_2$ non-empty, and every unit vector $v \in \mathbb{S}^{d-1}$, the Gaussian vector 
\[ \big(   \partial_v \nabla_{I_3} f(0) , f(0),\nabla_{I_1 \cup I_2} f(0)  \big) \]
is non-degenerate, where $\partial_v f$ denotes the partial derivative in the direction $v$.  Then
\[ \DC \Big( \frac{ \nabla_{I_3} f(x) - \nabla_{I_3} f(0) }{|x|} \, \Big| \, f(0),\nabla_{I_1} f(0), \nabla_{I_2 \setminus I_3} f(x) \Big) \]
is uniformly bounded below as $x \to 0$ (with $x \neq 0$).
\end{lemma}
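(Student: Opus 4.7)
The plan is to parameterise $x = |x|v$ with $v \in \mathbb{S}^{d-1}$ and show that, as $|x| \to 0$, the conditional DC in question converges uniformly in $v$ to a strictly positive limit. Write
\[ X_x = \frac{\nabla_{I_3} f(x) - \nabla_{I_3} f(0)}{|x|}, \qquad Y_x = \bigl( f(0),\,\nabla_{I_1} f(0),\,\nabla_{I_2 \setminus I_3} f(x) \bigr), \]
and let $X'_v = \partial_v \nabla_{I_3} f(0)$ and $Y' = \bigl( f(0),\,\nabla_{I_1} f(0),\,\nabla_{I_2 \setminus I_3} f(0) \bigr)$. Since $I_3 \subseteq I_1$, we have $Y' = (f(0),\,\nabla_{I_1 \cup I_2} f(0))$, so the non-degeneracy hypothesis is exactly that $(X'_v, Y')$ is a non-degenerate Gaussian vector for every $v \in \mathbb{S}^{d-1}$.

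The first step is a uniform Taylor expansion. Since $f$ is $C^3$, the field $\nabla_{I_3} f$ is $C^2$, and the integral-remainder form of Taylor's theorem gives
\[ X_x - X'_v = \frac{1}{|x|}\int_0^1 (1-s) \sum_{i,j} x_i x_j\, \partial_i \partial_j \nabla_{I_3} f(sx)\, ds. \]
Bounding the $L^2$-norm inside the integral using stationarity (so that $\|\partial_i \partial_j \nabla_{I_3} f(y)\|_{L^2}$ is independent of $y$), we get $\|X_x - X'_v\|_{L^2} \le C|x|$ with $C$ \emph{uniform in} $v$. Similarly $\nabla_{I_2 \setminus I_3} f(x) - \nabla_{I_2 \setminus I_3} f(0) = O(|x|)$ in $L^2$ uniformly in $v$. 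Hence $(X_x, Y_x) \to (X'_v, Y')$ in $L^2$ at rate $O(|x|)$ uniformly in $v$.

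The second step transfers this to the DC. Covariance is continuous in the $L^2$ topology, and the relevant $L^2$-norms are bounded uniformly (again by stationarity), so each entry of $\mathrm{Cov}[(X_x, Y_x)]$ agrees with the corresponding entry of $\mathrm{Cov}[(X'_v, Y')]$ up to $O(|x|)$, uniformly in $v$. Since the determinant is a polynomial in bounded entries, we conclude
\[ \DC(X_x, Y_x) = \DC(X'_v, Y') + O(|x|), \qquad \DC(Y_x) = \DC(Y') + O(|x|), \]
uniformly in $v$.

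The final step is a compactness argument. By item~3 of Lemma \ref{l:dc}, $\DC(X'_v \,|\, Y') = \DC(X'_v, Y')/\DC(Y')$, which is strictly positive for every $v$ by the non-degeneracy hypothesis and continuous in $v$ (since $v \mapsto X'_v$ is continuous in $L^2$ and the denominator does not depend on $v$). By compactness of $\mathbb{S}^{d-1}$,
\[ \delta := \inf_{v \in \mathbb{S}^{d-1}} \DC(X'_v \,|\, Y') > 0. \]
Combined with the $O(|x|)$-approximation from the previous step, this yields $\DC(X_x \,|\, Y_x) \ge \delta/2$ for all sufficiently small $|x| > 0$. The main subtlety is the uniformity in $v$ of every approximation, but this is built-in for free here because stationarity renders all the relevant $L^2$-norms of derivatives of $f$ independent of the evaluation point, so the constants in the Taylor remainder and the covariance-entry convergence rates depend only on $|x|$.
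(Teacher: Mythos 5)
Your proof is correct and rests on the same underlying analysis as the paper's: the first-order Taylor expansion of $\nabla_{I_3}f$ with a quadratic remainder, the identity $\DC(X\,|\,Y)=\DC(X,Y)/\DC(Y)$, and the continuity of $\DC$ under $L^2$ convergence. The only difference is organizational — the paper argues by contradiction (extracting a subsequence with $\hat{x}_n\to v\in\mathbb{S}^{d-1}$ and showing numerator and denominator of the ratio converge to positive limits), whereas you give a direct argument by making the Taylor remainder bound uniform in $v$ and invoking compactness of $\mathbb{S}^{d-1}$ explicitly; this is a cleaner presentation of the same idea rather than a genuinely different route.
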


\begin{proof}
Suppose for contradiction that there exists a sequence $x_n\to 0$ such that
\begin{equation}\label{e:dd0}
    \DC \Big( \frac{ \nabla_{I_3} f(x_n) - \nabla_{I_3} f(0) }{|x_n|} \, \Big| \, f(0),\nabla_{I_1} f(0), \nabla_{I_2 \setminus I_3} f(x_n) \Big)\to 0.
\end{equation}
By the third item of Lemma~\ref{l:dc} this determinant is equal to
\begin{equation}\label{e:dd1}
\DC \Big( \frac{ \nabla_{I_3} f(x_n) - \nabla_{I_3} f(0) }{|x_n|} , f(0),\nabla_{I_1} f(0), \nabla_{I_2 \setminus I_3} f(x_n) \Big)\Big\slash\DC \big( f(0),\nabla_{I_1} f(0), \nabla_{I_2 \setminus I_3} f(x_n) \big).
\end{equation}
Since $f\in C^3$, $\nabla_{I_2\setminus I_3}f(x_n)\to\nabla_{I_2\setminus I_3}f(0)$ almost surely as $n\to\infty$. Since these variables are jointly Gaussian, convergence occurs in $L^2$ and so by the second item of Lemma~\ref{l:dc}
\begin{equation}\label{e:dd2}
    \DC \Big( f(0),\nabla_{I_1} f(0), \nabla_{I_2 \setminus I_3} f(x_n) \Big)\to\DC \Big( f(0),\nabla_{I_1} f(0), \nabla_{I_2 \setminus I_3} f(0) \Big)>0
\end{equation}
where the final inequality follows by assumption.

By passing to a subsequence, we may assume that $\hat{x}_n:=x_n/\lvert x_n\rvert\to v$ for some $v\in\mathbb{S}^{d-1}$. By a Taylor expansion, for $\lvert x_n\rvert\leq 1$
\begin{displaymath}
    \left\lvert\frac{ \nabla_{I_3} f(x_n) - \nabla_{I_3} f(0) }{|x_n|}-\partial_{\hat{x}_n}\nabla_{I_3}f(0)\right\rvert\leq c\lvert x_n\rvert\cdot\|f\|_{C^3(B(1))}
\end{displaymath}
where $c>0$ depends only on the dimension $d$, and $B(1)$ denotes the Euclidean ball of unit radius. Therefore
\begin{displaymath}
    \frac{ \nabla_{I_3} f(x_n) - \nabla_{I_3} f(0) }{|x_n|}\to\partial_v\nabla_{I_3}f(0)
\end{displaymath}
almost surely, and hence in $L^2$. By the second item of Lemma~\ref{l:dc} the numerator of \eqref{e:dd1} converges to
\begin{displaymath}
    \DC \Big( \partial_v\nabla_{I_3} f(0), f(0),\nabla_{I_1} f(0), \nabla_{I_2 \setminus I_3} f(0) \Big)
\end{displaymath}
which is positive by assumption. Together with \eqref{e:dd2}, this contradicts \eqref{e:dd0}.
\end{proof}

\begin{lemma}
\label{l:dd2}
Suppose $f$ is a $C^{3}$-smooth stationary Gaussian field such that, for every unit vector $v\in\mathbb{S}^{d-1}$, the Gaussian vector 
\[ \big( f(0),\nabla f(0),\partial_v\nabla f(0),\partial_v^3f(0) \big) \]
is non-degenerate. Then 
\[ \Var \big[ f(0) \, \big| \, f^t(x) - f(0),  \nabla f(0) , \nabla f^t(x) ]  \]
is uniformly bounded below as $(x,t) \to (0,1)$ (with $x\neq 0$ and $t\neq 1$).
\end{lemma}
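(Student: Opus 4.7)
The plan is to argue by contradiction, extending the divided-difference strategy of Lemma~\ref{l:dd} with two separate rescalings and a ``midpoint-rule'' correction that cancels a second-order linear relation between the conditioning variables. Assume for contradiction that there exists a sequence $(x_n, t_n) \to (0,1)$ with $x_n \neq 0$ and $t_n < 1$ along which $\Var[f(0) \mid Y_n] \to 0$, where $Y_n = (f^{t_n}(x_n) - f(0),\nabla f(0),\nabla f^{t_n}(x_n))$. Write $s_n = \sqrt{1-t_n^2}$, and set $A_n = f^{t_n}(x_n) - f(0)$ and $B_n = \nabla f^{t_n}(x_n) - \nabla f(0)$. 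A Taylor expansion shows that $A_n$ contains a $\tfrac{1}{2} x_n^T \nabla^2 f(0) x_n$ term of order $|x_n|^2$ while $B_n$ contains a $\nabla^2 f(0) x_n$ term of order $|x_n|$, and these two contributions cancel in the midpoint correction $A_n'' := A_n - \tfrac{1}{2} x_n \cdot B_n$, which is therefore only of order $O(s_n + |x_n|^3)$. Since $(A_n'', \nabla f(0), B_n)$ spans the same linear space (and hence generates the same $\sigma$-algebra) as $Y_n$, and scaling by nonzero scalars does not change $\sigma$-algebras, I would study the rescaled vector
\[ Y_n' := \bigl( A_n''/\alpha_n,\ \nabla f(0),\ B_n/\beta_n \bigr), \qquad \alpha_n := \max(s_n, |x_n|^3),\ \beta_n := \max(|x_n|, s_n), \]
so that $\Var[f(0) \mid Y_n] = \DC(f(0), Y_n')/\DC(Y_n')$ by Lemma~\ref{l:dc}(3).

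Next I would pass to a subsequence on which $x_n/|x_n| \to v \in \mathbb{S}^{d-1}$, $s_n/\alpha_n \to \gamma$, $|x_n|^3/\alpha_n \to \delta$, $s_n/\beta_n \to \gamma'$, $|x_n|/\beta_n \to \delta'$ (with $\max(\gamma,\delta)=\max(\gamma',\delta')=1$), and Taylor-expand $f$ to third order and $\tilde f$ to first order. Upgrading the pointwise Peano remainders to $L^2$ bounds via $f \in C^3$ and Gaussian integrability of $\|f\|_{C^3}$ on compact sets, one obtains the $L^2$ limits
\[ A_n''/\alpha_n \longrightarrow \gamma\tilde f(0) - \tfrac{\delta}{12}\partial_v^3 f(0), \qquad B_n/\beta_n \longrightarrow \delta' \nabla^2 f(0) v + \gamma' \nabla\tilde f(0). \]
The main technical challenge is then to verify that the joint Gaussian vector
\[ V := \bigl( f(0),\ \gamma\tilde f(0) - \tfrac{\delta}{12}\partial_v^3 f(0),\ \nabla f(0),\ \delta' \nabla^2 f(0) v + \gamma' \nabla\tilde f(0) \bigr) \]
is non-degenerate in every admissible regime of $(\gamma, \delta, \gamma', \delta')$.

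To do this I would split any linear relation $c \cdot V = 0$ into its $f$- and $\tilde f$-parts using $f \perp \tilde f$; the $\tilde f$-part yields $c_2 \gamma = 0$ and $c_4 \gamma' = 0$. A short regime analysis then applies, based on the key observation that $\gamma < 1$ forces $s_n < |x_n|^3 \le |x_n|$ eventually and hence $\gamma' = 0$, $\delta' = 1$. This reduces the residual $f$-part in each case to a linear relation among the components of $(f(0), \nabla f(0), \partial_v \nabla f(0), \partial_v^3 f(0))$, or a sub-vector thereof, which must vanish by the standing non-degeneracy assumption. Hence $V$ is non-degenerate; the sub-vector $Y^*$ given by the $L^2$-limit of $Y_n'$ is then also non-degenerate (as a principal submatrix of a positive-definite matrix), and by continuity of $\DC$ under $L^2$ convergence (Lemma~\ref{l:dc}(2)),
\[ \Var[f(0) \mid Y_n] \longrightarrow \DC(V)/\DC(Y^*) > 0, \]
contradicting the hypothesis. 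The core difficulty that this strategy overcomes is that in the regime $s_n \ll |x_n|^2$ the $O(|x_n|^2)$ leading term of $A_n$ is linearly determined by the $O(|x_n|)$ leading term of $B_n$ through $\nabla^2 f(0) x_n$, so without the midpoint correction one obtains a degenerate rescaled limit.
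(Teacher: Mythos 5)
Your overall strategy---argue by contradiction, pass to a subsequence, replace the conditioning variables by divided-difference combinations so that they converge in $L^2$ to a vector whose non-degeneracy is guaranteed by the hypothesis---is the same as the paper's, and your idea of collapsing the paper's two-stage case split ($\beta<1$ versus $\beta=1$) into a single normalisation with parameters $\gamma,\delta,\gamma',\delta'$ is reasonable in principle. But the specific ``midpoint correction'' you use is wrong, and the error is not cosmetic.

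You define $A_n'' := A_n - \tfrac12 x_n\cdot B_n$ with $A_n = f^{t_n}(x_n)-f(0)$ and $B_n = \nabla f^{t_n}(x_n)-\nabla f(0)$, and claim $A_n'' = O(s_n + |x_n|^3)$. Taylor-expanding gives
\[
A_n'' \;=\; \tfrac{1+t_n}{2}\, x_n\cdot \nabla f(0) \;+\; s_n\tilde f(0) \;-\;(1-t_n)f(0)\;+\;O(|x_n|^3 + s_n|x_n|),
\]
because $A_n$ contributes $t_n x_n\cdot\nabla f(0)$ from the first-order term of $f$, and $-\tfrac12 x_n\cdot B_n$ contributes $+\tfrac{1-t_n}{2}x_n\cdot\nabla f(0)$; only the two $\tfrac12 x_n^T\nabla^2 f(0)x_n$ pieces cancel. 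The leftover $\tfrac{1+t_n}{2}x_n\cdot\nabla f(0)$ is of order $|x_n|$, which dominates $\alpha_n=\max(s_n,|x_n|^3)$ in the regime $s_n\ll |x_n|^3$. Hence $A_n''/\alpha_n$ does \emph{not} converge in $L^2$; its coefficient on $\nabla f(0)$ blows up, the limiting vector you write down (which contains no $\nabla f(0)$ in the first slot) is not the limit, and Lemma~\ref{l:dc}(2) cannot be applied. This is precisely the degeneracy the construction was meant to avoid.

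The fix is to also subtract the linear $\nabla f(0)$ term, i.e.\ work with
\[
C_n := A_n \;-\; t_n\, x_n\cdot\nabla f(0)\;-\;\tfrac12\, x_n\cdot\bigl(\nabla f^{t_n}(x_n) - t_n\nabla f(0)\bigr)
      \;=\; A_n'' \;-\; \tfrac{1+t_n}{2}\,x_n\cdot\nabla f(0),
\]
which is permitted since $\nabla f(0)$ and $\nabla f^{t_n}(x_n)$ are both conditioning variables. With this correction the $O(|x_n|)$ and $O(|x_n|^2)$ terms vanish, $C_n/(\sqrt{1-t_n^2}+|x_n|^3)$ converges in $L^2$ to $(1-\gamma)\tilde f(0)-\tfrac{\gamma}{12}\partial_v^3 f(0)$, and the rest of your regime analysis goes through. (The paper performs exactly this correction in its $\beta=1$ case; it also treats the simpler $\beta<1$ regime separately with a second-order correction, but your single third-order correction, once fixed, subsumes it.) Two smaller points: the third-order Taylor remainder for $f$ needs the Lagrange form with an intermediate point $\theta_n x_n$ to be controlled uniformly in $L^2$, not just a Peano $o(|x_n|^3)$ estimate; and the denominator $\DC(Y_n')$ should be handled by the same argument (its limit is $\DC(V)$ with the $f(0)$ coordinate removed), not merely by positive-definiteness of a principal submatrix, since you also need that limit to be strictly positive.
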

\begin{proof}
Suppose for contradiction that the conditional variance is not bounded below, so there exists a sequence $(x_n,t_n)\to (0,1)$ such that, by the third item of Lemma~\ref{l:dc},
\begin{equation}\label{e:dd3}
    \frac{\DC (f(0),f^{t_n}(x_n)-f(0),\nabla f(0),\nabla f^{t_n}(x_n))}{\DC (f^{t_n}(x_n)-f(0),\nabla f(0),\nabla f^{t_n}(x_n))}\to 0.
\end{equation}
By passing to a subsequence, we may assume that
\begin{displaymath}
 \hat{x}_n:=\frac{x_n}{\lvert x_n\rvert}\to v\in \mathbb{S}^{d-1},\quad \frac{\lvert x_n\rvert}{\sqrt{1-t_n^2}+\lvert x_n\rvert}\to\alpha\in[0,1],\quad\text{and}\quad\frac{\lvert x_n\rvert^2}{\sqrt{1-t_n^2}+\lvert x_n\rvert^2}\to\beta\in[0,1].
\end{displaymath}
By the first item of Lemma~\ref{l:dc}, the left hand side of \eqref{e:dd3} is unchanged if we replace $\nabla f^{t_n}(x_n)$ in the numerator and denominator by
\begin{displaymath}
    A_n:=\frac{\nabla f^{t_n}(x_n)-t_n\nabla f(0)}{\sqrt{1-t_n^2}+\lvert x_n\rvert}.
\end{displaymath}
By Taylor's theorem, for $\lvert x_n\rvert\leq 1$,
\begin{displaymath}
    A_n=\frac{\sqrt{1-t_n^2}\nabla\Tilde{f}(x_n)+t_n\lvert x_n\rvert\partial_{\hat{x}_n}\nabla f(0)}{\sqrt{1-t_n^2}+\lvert x_n\rvert}+O_{C^{3}(B(1))}\left(\lvert x_n\rvert\right)
\end{displaymath}
where we introduce the notation $U_n=V_n+O_{C^{3}(B(1))}(a_n)$ to mean that there exists a constant $c>0$ depending only on the dimension $d$ such that
\begin{displaymath}
    \lvert U_n-V_n\rvert\leq ca_n\left(\|f\|_{C^{3}(B(1))}+\|\Tilde{f}\|_{C^{3}(B(1))}\right).
\end{displaymath}
Therefore as $n\to\infty$
\begin{equation}\label{e:dd4}
    A_n\to (1-\alpha)\nabla\Tilde{f}(0)+\alpha\partial_v\nabla f(0)
\end{equation}
almost surely and hence in $L^2$.

We first consider the case that $\beta<1$. Again by the first item of Lemma~\ref{l:dc}, the left hand side of \eqref{e:dd3} is unchanged if we replace $f^t(x_n)-f(0)$ in the numerator and denominator by
\begin{displaymath}
    B_n:=\frac{f^{t_n}(x_n)-f(0)-t_nx_n\cdot\nabla f(0)}{\sqrt{1-t_n^2}+\lvert x_n\rvert^2}.
\end{displaymath}
By a Taylor expansion
\begin{align*}
    B_n&=\frac{\sqrt{1-t_n^2}\Tilde{f}(x_n)-(1-t_n)f(0)+t_n\frac{\lvert x_n\rvert^2}{2}\partial_{\hat{x}_n}^2f(0)}{\sqrt{1-t_n^2}+\lvert x_n\rvert^2}+O_{C^{3}(B(1))}(\lvert x_n\rvert)\to(1-\beta)\Tilde{f}(0)+\frac{\beta}{2}\partial_v^2f(0)
\end{align*}
almost surely and in $L^2$. Combining this with \eqref{e:dd4} and the second item of Lemma~\ref{l:dc}, we see that the numerator on the left hand side of \eqref{e:dd3} converges to
\begin{displaymath}
    \DC\Big(f(0),\nabla f(0),(1-\alpha)\nabla\Tilde{f}(0)+\alpha\partial_v\nabla f(0),(1-\beta)\Tilde{f}(0)+\frac{\beta}{2}\partial_v^2f(0)\Big).
\end{displaymath}
This expression is strictly positive since $\beta<1$ and the Gaussian vector $(f(0),\nabla f(0),\partial_v\nabla f(0))$ is non-degenerate (and independent of $\Tilde{f}$). The denominator of \eqref{e:dd3} converges to the same expression with $f(0)$ removed, which is also positive. Hence \eqref{e:dd3} has a strictly positive limit, yielding a contradiction.

Next we consider the case that $\beta=1$ (which implies $\alpha=1$). Note that the limiting $\DC()$ terms in the numerator and denominator considered above are both zero, so we must consider a higher order Taylor expansion. By passing to a further subsequence, we assume that
\begin{displaymath}
    \frac{\lvert x_n\rvert^3}{\sqrt{1-t_n^2}+\lvert x_n\rvert^3}\to\gamma\in[0,1].
\end{displaymath}
By the first item of Lemma~\ref{l:dc} we can replace $f^{t_n}(x_n)-f(0)$ in \eqref{e:dd3} by
\begin{displaymath}
    C_n:=\frac{f^{t_n}(x_n)-f(0)-t_nx_n\cdot\nabla f(0)-\frac{x_n}{2}\cdot\big(\nabla f^{t_n}(x_n)-t_n\nabla f(0)\big)}{\sqrt{1-t_n^2}+\lvert x_n\rvert^3}.
\end{displaymath}
By Taylor's theorem, there exists $\theta_n,\theta^\prime_n\in[0,1]$ such that
\begin{displaymath}
    f^{t_n}(x_n)-f(0)=\sqrt{1-t_n^2}\Tilde{f}(x_n)-(1-t_n)f(0)+t_n\lvert x_n\rvert\partial_{\hat{x}_n}f(0)+t_n\frac{\lvert x_n\rvert^2}{2}\partial_{\hat{x}_n}^2f(0)+t_n\frac{\lvert x_n\rvert^3}{3!}\partial_{\hat{x}_n}^3f(\theta_nx_n)
\end{displaymath}
and
\begin{align*}
x_n\cdot\big(\nabla f^{t_n}(x_n)-t_n\nabla f(0)\big)=&\sqrt{1-t_n^2}\lvert x_n\rvert\partial_{\hat{x}_n}\Tilde{f}(x_n)+t_n\lvert x_n\rvert^2\partial_{\hat{x}_n}^2f(0)+t_n\frac{\lvert x_n\rvert^3}{2}\partial_{\hat{x}_n}^3f(\theta^\prime_n x_n).
\end{align*}
We therefore see that
\begin{align*}
    C_n\to (1-\gamma)\Tilde{f}(0)-\frac{\gamma}{12}\partial_v^3f(0)
\end{align*}
almost surely and in $L^2$. Hence the numerator of \eqref{e:dd3} converges to
\begin{displaymath}
    \DC\Big(f(0),\nabla f(0),\partial_v\nabla f(0),(1-\gamma)\Tilde{f}(0)-\frac{\gamma}{12}\partial_v^3f(0)\Big)
\end{displaymath}
which is positive by assumption. The denominator of \eqref{e:dd3} converges to the same expression with $f(0)$ removed, which is also positive and therefore gives the required contradiction.

\end{proof}

\begin{proof}[Proof of Lemma \ref{l:numbound}]
Let $I_1, I_2 \subseteq \{1,2,\ldots, d\}$ be given, and abbreviate 
\[ F(t,x) = \DC \big( f(0),f^t(x),\nabla_{I_1} f(0), \nabla_{I_2} f^t(x)  \big) . \]
It suffices to prove the following two statements: 
 \begin{enumerate}
 \item For all $\delta \in (0,1]$ there is a $c > 0$ such that, if either $t \le 1 - \delta $ or $|x| \ge \delta$,
 \[ F(t,x) \ge c . \]
\item There are $\delta, c > 0$ such that, for all $t \ge 1-\delta$ and $|x| \le \delta$,
\[ F(t,x) \ge c \max\{(1-t)^{1/2}, |x| \}^{2d'} (1-t ). \]
\end{enumerate}
We prove these statements in turn. First note that, by the second item of Lemma \ref{l:dc} and the fact that $f$ is $C^1$-smooth, $F(t,x)$ is a continuous function of $(t,x) \in [0,1) \times \R^d$. Now for the first statement we note that, by the third item of Lemma \ref{l:dc},
\[ F(t,x)  = \DC \big( f(0), \nabla_{I_1} f(0) \big) \times \DC \big( f^t(x),  \nabla_{I_2} f^t(x) \, | \, f(0), \nabla_{I_1} f(0) \big) , \]
and by the first and third items of Lemma \ref{l:dc},
\begin{align*}
\DC \big( f^t(x),  \nabla_{I_2} f^t(x) \, | \, f(0), \nabla_{I_1} f(0) \big)  &\ge  \DC \big( f^t(x),  \nabla_{I_2} f^t(x) \, | \, f \big) \\
&=  \DC \big( (1-t^2)^{1/2} \tilde{f}(x),  (1- t^2)^{1/2} \nabla_{I_2} \tilde{f}(x) \big) . 
\end{align*}
By the first item of Lemma \ref{l:dc}, and also by stationarity and the equality in law of $f$ and $\tilde{f}$, the above is equal to 
\[ (1-t^2)^{ \textrm{dim}(I_2) + 1} \DC \big(  f(0),   \nabla_{I_2} f(0) \big)  .\]
Since $( f(0), \nabla f(0) )$ is non-degenerate, we conclude that there is a $c_1(\delta) > 0$ such that 
\begin{equation}
\label{e:Fbound1}
F(t,x) \ge c_1(\delta)
\end{equation}
for all $t \le 1-\delta$.

Moreover, by the fourth item of Lemma \ref{l:dc},
\[ F(t,x) \ge  F(1,x) = \DC \big(f(0),\nabla_{I_1} f(0), f(x), \nabla_{I_2} f(x) \big)   . \]
 By stationarity and since $\partial^\alpha K(x) \to 0$ as $|x| \to \infty$ for $|\alpha| \le 2$, 
 \[  \DC \big(f(0),\nabla_{I_1} f(0), f(x), \nabla_{I_2} f(x) \big)  \to \DC \big(f(0),\nabla_{I_1} f(0) \big)  \DC \big( f(0), \nabla_{I_2} f(0) \big)     \]
 as $|x| \to \infty$. By continuity of $F(t,x)$, and since $( f(0), \nabla f(0) )$ is non-degenerate, we conclude that there is a $c_2(\delta) > 0$ such that 
\begin{equation}
\label{e:Fbound2}
F(t,x) \ge  c_2(\delta)
\end{equation}
for all $|x| \ge \delta$. Combining \eqref{e:Fbound1} and \eqref{e:Fbound2} we have proven the first statement.

We turn to the second statement. Abbreviating $I_3 = I_1 \cap I_2$, by repeated application of the third item of Lemma~\ref{l:dc} we have
\begin{align*}
& F(t,x) = \DC \big( f(0),\nabla_{I_1} f(0), \nabla_{I_2 \setminus I_3} f^t(x) \big)   \times \DC \big( \nabla_{I_3} f^t(x) \,|\, f(0),\nabla_{I_1} f(0), \nabla_{I_2 \setminus I_3} f^t(x) \big)  \\
& \qquad \qquad \qquad\qquad\qquad\qquad \times \Var \big[ f^t(x) \,|\, f(0),\nabla_{I_1} f(0), \nabla_{I_2} f^t(x) \big]  =: F_1(t,x) \times F_2(t,x) \times F_3(t,x),
\end{align*} 
where we interpret $F_2(t,x) = 1$ if $I_3$ is empty. We make the following four claims, which combined give the second statement:
\begin{enumerate}
\item  $F_1(t,x)$ is uniformly bounded below as $x \to 0$;
\item $F_2(t,x) (1-t^2)^{-d'}$ is uniformly bounded below;
\item Restricting to $t \ge 1/2$, $ F_2(t,x) |x|^{-2 d'}$ is uniformly bounded below as $x \to 0$;
\item $F_3(t,x) (1-t^2)^{-1}$ is uniformly bounded below.
\end{enumerate}
Let us establish these claims in turn:
\begin{enumerate}
\item By the second and fourth items of Lemma \ref{l:dc}, as $x \to 0$,
\begin{align*}
 F_1(t,x) =  \DC \big( f(0),\nabla_{I_1} f(0), \nabla_{I_2 \setminus I_3} f^t(x) \big)  & \ge  \DC \big( f(0),\nabla_{I_1} f(0), \nabla_{I_2 \setminus I_3} f(x) \big) \to \DC \big( f(0),\nabla_{I_1 \cup I_2 } f(0) \big) ,
\end{align*}
and the claim follows since $(f(0),\nabla f(0))$ is non-degenerate.
\item We may assume that $d' = \text{dim}(I_3)  \ge 1$. By conditioning on $f$ and using the first and fourth items of Lemma \ref{l:dc},
\begin{align*}
F_2(t,x) & = \DC \big( \nabla_{I_3} f^t(x) \,|\, f(0),\nabla_{I_1} f(0), \nabla_{I_2 \setminus I_3} f^t(x) \big)  &  \\
& \ge  \DC \big((1-t^2)^{1/2} \nabla_{I_3} \tilde{f}(x) \, |\,  \nabla_{I_2 \setminus I_3} \tilde{f}(x) \big)  = (1-t^2)^{d'}  \DC  \big( \nabla_{I_3} f(0) \,| \, \nabla_{I_2 \setminus I_3} f(0) \big)  
\end{align*}
 where the second step also used stationarity and the equality in law of $f$ and $\tilde{f}$. The claim follows since  $\nabla f(0)$ is non-degenerate.
\item By conditioning on $\tilde{f}$ and by the first and fourth items of Lemma~\ref{l:dc}, 
\begin{align*}
F_2(t,x) & = \DC \big( \nabla_{I_3} f^t(x) \, | \, f(0),\nabla_{I_1} f(0), \nabla_{I_2 \setminus I_3} f^t(x) \big)  \\
& \ge \DC \big( t  \nabla_{I_3} f(x) \,| \, f(0),\nabla_{I_1} f(0), \nabla_{I_2 \setminus I_3} f(x) \big)  \\
&  \ge 2^{-2d'} |x|^{2d'} \DC \Big( \frac{ \nabla_{I_3} f(x) - \nabla_{I_3} f(0) }{|x|} \, \Big| \, f(0),\nabla_{I_1} f(0), \nabla_{I_2 \setminus I_3} f(x) \Big)
\end{align*}
where the last step used that $t  \ge 1/2$. Then the claim follows from Lemma \ref{l:dd} above.
\item By again conditioning on $f$ and using  the first and fourth items of Lemma \ref{l:dc},
\[ F_3(t,x) = \Var \big[f^t(x) \, | \, f(0),\nabla_{I_1} f(0), \nabla_{I_2} f^t(x) \big]  \ge (1-t^2) \Var \big[ f(0) \,| \, \nabla_{I_2} f(0) \big]  , \]
and the claim follows since  $(f(0), \nabla f(0) )$ is non-degenerate. \qedhere
\end{enumerate}
\end{proof} 

Before proving Lemma \ref{l:denombound} we establish a uniform bound on the mean of the derivatives of the field under the conditioning \eqref{e:cond3}:

\begin{lemma}
\label{l:unifbound}
There exists a $c > 0$ such that, for all $I_1,I_2 \subseteq \{1,2,\ldots,d\}$, $\ell \in \R$, $|\alpha| \le 2$, and $(t,x) \in [0,1) \times \R^d\setminus\{0\}$,
\[  \big| \E[ \partial^{\alpha} f(0) \, \big| \, f(0) = f^t(x) = \ell, \nabla_{I_1}f(0) = \nabla_{I_2}f^t(x) = 0 ]  \big| \le c |\ell| .\]
\end{lemma}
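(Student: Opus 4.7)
The plan is to combine the Gaussian regression formula with a Cauchy--Schwarz estimate and a convenient change of basis that reduces the bound to a lower bound on a conditional variance already established in Lemma~\ref{l:dd2}.

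First, writing $X = \partial^\alpha f(0)$ and $Y = (f(0), f^t(x), \nabla_{I_1} f(0), \nabla_{I_2} f^t(x))$, I would use the standard Gaussian regression identity and the Cauchy--Schwarz inequality (applied to the representation $\E[X \mid Y = y] = \Cov(X, u^T Y)$ with $u = \Sigma_Y^{-1} y$) to obtain
\[ \bigl| \E[\partial^\alpha f(0) \mid Y = y] \bigr|^2 \le \Var(\partial^\alpha f(0)) \cdot y^T \Sigma_Y^{-1} y . \]
Since $|\alpha| \le 2$ and $f$ is $C^3$-smooth, $\Var(\partial^\alpha f(0))$ is a finite constant depending only on $f$. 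Hence it suffices to establish a bound of the form $y^T \Sigma_Y^{-1} y \le c \ell^2$ at $y = (\ell, \ell, 0, 0)$.

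Second, I would perform the change of basis $Y \mapsto Y' = (f(0),\ f^t(x)-f(0),\ \nabla_{I_1}f(0),\ \nabla_{I_2}f^t(x))$, which is invertible for every $(t,x)$ and sends the conditioning value $y = (\ell,\ell,0,0)$ to $y' = (\ell,0,0,0)$. Since quadratic forms are preserved under such changes of basis,
\[ y^T \Sigma_Y^{-1} y = (y')^T \Sigma_{Y'}^{-1} y' = \ell^2 \, (\Sigma_{Y'}^{-1})_{11} . \]
By the Schur complement formula the top-left entry is the reciprocal of a conditional variance:
\[ (\Sigma_{Y'}^{-1})_{11} = \frac{1}{V(t,x)}, \qquad V(t,x) := \Var\bigl[f(0) \ \big|\ f^t(x) - f(0),\ \nabla_{I_1} f(0),\ \nabla_{I_2} f^t(x)\bigr]. \]

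Third, by monotonicity of conditional variance under enlarged conditioning,
\[ V(t,x) \ \ge\ \Var\bigl[f(0) \ \big|\ f^t(x) - f(0),\ \nabla f(0),\ \nabla f^t(x)\bigr] , \]
and the task reduces to bounding the right-hand side below uniformly over $(t,x) \in [0,1) \times \R^d \setminus \{0\}$. The critical regime $(t,x) \to (1,0)$ is precisely Lemma~\ref{l:dd2}, whose non-degeneracy hypothesis on $(f(0), \nabla f(0), \partial_v \nabla f(0), \partial_v^3 f(0))$ follows from Assumption~\ref{a:gen}(3) together with the appendix non-degeneracy results. For $(t,x)$ in any compact subset of $[0,1)\times(\R^d\setminus\{0\})$, the conditional variance is continuous and strictly positive (again by the non-degeneracy of the underlying Gaussian vector), and for $|x|\to\infty$ the decay $\partial^\alpha K(x) \to 0$ in Assumption~\ref{a:gen}(2) yields asymptotic independence and hence a strictly positive limit. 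Combining these three regimes gives a uniform lower bound $V(t,x) \ge c' > 0$, from which the lemma follows.

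The main obstacle is the joint degeneracy of $\Sigma_Y$ as $(t,x)\to(1,0)$, but the Cauchy--Schwarz step followed by the basis change to $Y'$ is precisely designed to absorb this: the potentially divergent term $(\Sigma_{Y'}^{-1})_{11}$ is exactly a conditional variance reciprocal, and the hard work of bounding it below has already been done in Lemma~\ref{l:dd2}. The remaining three-regime verification (near $(1,0)$, bounded away from $(1,0)$ with bounded $|x|$, and $|x|\to\infty$) is routine given Assumption~\ref{a:gen}.
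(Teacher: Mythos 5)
Your approach is correct in spirit and converges to the same essential step as the paper: reducing the bound to a uniform lower bound on $s_2 := \Var[f(0) \mid f^t(x)-f(0), \nabla f(0), \nabla f^t(x)]$, which in the critical regime $(t,x)\to(1,0)$ is exactly Lemma~\ref{l:dd2}. The intermediate manipulations differ slightly: the paper first performs the change of variables (via iterated conditioning) to obtain the exact identity $\E[\partial^\alpha f(0)\mid\cdots]=\ell\, s_1/s_2$, and only then applies Cauchy--Schwarz to bound the conditional covariance $s_1$, whereas you apply Cauchy--Schwarz to the full regression representation $\E[X\mid Y=y]=\Cov(X,\Sigma_Y^{-1}y\cdot Y)$ to get the bound $\Var(X)\,y^T\Sigma_Y^{-1}y$ and only afterwards do the change of basis and Schur-complement identification of $y^T\Sigma_Y^{-1}y$ with $\ell^2/s_2$. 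Both are legitimate; the paper's exact formula is arguably a bit sharper, but for a one-sided bound the routes are equivalent.

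There is, however, a small but genuine gap in your three-regime covering argument. You propose: (1) $(t,x)\to(1,0)$ via Lemma~\ref{l:dd2}; (2) compact subsets of $[0,1)\times(\R^d\setminus\{0\})$ by continuity; (3) $|x|\to\infty$ by asymptotic independence. These do not cover the region where $t\to 1$ with $|x|$ bounded away from both $0$ and $\infty$ (say $|x|\in[\delta,M]$ and $t>1-\eps$), since compact subsets of the half-open product $[0,1)\times(\R^d\setminus\{0\})$ stay away from $t=1$. The missing region is handled in the paper by arguing as in the proof of Lemma~\ref{l:numbound}: use the fourth item of Lemma~\ref{l:dc} to obtain $\DC$-monotonicity in $t$, reducing the estimate for all $t$ to the case $t=1$, and then use non-degeneracy of $(f(0),f(x),\nabla f(0),\nabla f(x))$ for $x\neq 0$ (Lemma~\ref{l:nondegen1}) plus the far-field limit to obtain a uniform bound for $|x|\ge\delta$. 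Alternatively, you could patch your own argument by extending regime (2) to compact subsets of the \emph{closed} product $[0,1]\times(\R^d\setminus\{0\})$: for $x\ne 0$ the conditional variance $V(t,x)$ is continuous up to $t=1$ (by item (2) of Lemma~\ref{l:dc}) and $V(1,x)>0$ by non-degeneracy, so continuity and compactness give the bound on $[0,1]\times\{\delta\le|x|\le M\}$. As written, though, the claim that the three regimes combine to give a uniform lower bound is not justified.
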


\begin{remark}
This result is only non-trivial in the case $\ell \neq 0$, since if $\ell = 0$ then in fact the left-hand side is zero by Gaussian regression.
\end{remark}

\begin{proof}[Proof of Lemma \ref{l:unifbound}]
By first conditioning on $\nabla_{I_1} f(0) = \nabla_{I_2} f^t(x) = 0$ and $f^t(x) - f(0) = 0$, and then on $f(0) = \ell$, we have by Gaussian regression (Section~\ref{ss:GaussianRegression}) that
\begin{align*}
&  \E[ \partial^\alpha f(0) \, \big| \, f(0) = f^t(x) = \ell, \nabla_{I_1}f(0) = \nabla_{I_2}f^t(x) = 0 ]   \\
 &  \qquad =  \E[ \partial^\alpha f(0) \, \big| \, f(0) =  \ell, f^t(x) - f(0) = 0, \nabla_{I_1}f(0) = \nabla_{I_2}f^t(x) = 0 ] =  \ell s_1  / s_2 ,
\end{align*} 
where 
\[ s_1 =  \Cov[ \partial^\alpha f(0), f(0) \, \big| \, f^t(x) - f(0), \nabla_{I_1} f(0) , \nabla_{I_2} f^t(x)  ]  \]
and
\[ s_2  = \Var \big[ f(0) \, \big| \,  f^t(x) - f(0), \nabla_{I_1} f(0) , \nabla_{I_2} f^t(x) \big] .\]
By the Cauchy-Schwarz inequality, and by the third item of Lemma \ref{l:dc}, 
\[ s_1 \le  \max\{ \Var[\partial^\alpha f(0)  ], \Var[f(0) ] \}  ,\] 
so it remains to show that $s_2$ is uniformly bounded away from zero over $(t,x) \in [0,1) \times \R^d\setminus\{0\}$.

By the third item of Lemma~\ref{l:dc}
\begin{equation}\label{e:unifbound}
    s_2\geq\Var\big[ f(0) \, \big| \,  f^t(x) - f(0), \nabla f(0) , \nabla f^t(x) \big] .
\end{equation}
Arguing as in the proof of Lemma~\ref{l:numbound}, since $\partial^\alpha K(x) \to 0$ as $|x| \to \infty$ we deduce that, for any $\delta > 0$, there is a $c > 0$ such that $s_2 \ge c$ if either $t \le 1-\delta$ or $|x| \ge \delta$. Lemma~\ref{l:dd2} and \eqref{e:unifbound} then show that $s_2$ is bounded away from zero for $(x,t)$ close to $(0,1)$, completing the proof.
\end{proof}

\begin{proof}[Proof of Lemma \ref{l:denombound}]
Expanding the determinants, applying H\"{o}lder's inequality, and by stationarity and the equality in law of $f$ and $\tilde{f}$, it suffices to prove that
\[   \E \big[ | \partial_v \partial_{v'} f(0)  |^k  \, \big|  \, f(0) =  f^t(x)  = \ell , \nabla_{I_1} f(0) =  \nabla_{I_2} f^t(x) = 0 \big] \le c (1 + |\ell|^k )    \]
for all $k \in \N$ and unit vectors $v,v' \in \mathbb{S}^{d-1}$, where the constant $c > 0$ depends only on the field. Since these are Gaussian variables, this moment bound is implied by uniform bounds on the mean
\[   \E \big[ | \partial_v \partial_{v'} f(0)  |  \, \big|  \, f(0) =  f^t(x)  = \ell , \nabla_{I_1} f(0) =  \nabla_{I_2} f^t(x) = 0 \big] \le c |\ell| , \]
which is a special case of Lemma \ref{l:unifbound} above, and on the variance
\[   \Var[  \partial_v \partial_{v'} f(0)    \, \big|  \, f(0) =  f^t(x)  = \ell , \nabla_{I_1} f(0) =  \nabla_{I_2} f^t(x) = 0 ] \le   \Var[  \partial_v \partial_{v'} f(0)   ] \le  c  \]
where in the first inequality we used the third item of Lemma \ref{l:dc}.
\end{proof}

\section{Topological arguments}
\label{s:top}

In this section we prove Lemmas~\ref{l:Topological_stability}, \ref{l:change} and \ref{l:AsympStab}. We start with a fundamental lemma of (stratified) Morse theory, which says roughly that if we smoothly perturb a function then the topology of its level set cannot change unless passing through a critical point. We will deduce Lemmas \ref{l:Topological_stability} and \ref{l:change} as consequences of this result.

Let $B$ be a stratified box (as defined in Section~\ref{s:stability}) with strata $\mathcal{F}$, let $A\subseteq B$, and let $I\subset\R$ be a compact interval. We say that a continuous map $H:A\times I\to B$ is a \emph{stratified isotopy of $A$} if, for each $t\in I$, $H(\cdot,t)$ is a homeomorphism onto its image and, for each $F\in\mathcal{F}$, $H(F\times\{t\})\subseteq F$. In the case that $A=B$, we also require that $H(B,t)=B$ for every $t\in I$.

\begin{lemma}
\label{l: morse continuity}
Let $B$ be a stratified box and $U$ an open neighbourhood of $B$. Let $g$ and $h$ be functions in class $C^2(U)$, and define $g_t=g+th$ for $t\in [0,1]$. Suppose that $g_t$ has no (stratified) critical points in $B$ at level $\ell$ for all $t\in [0,1]$. Then there exists a stratified isotopy $H:B\times[0,1]\to B$ such that, for all $t\in[0,1]$,
\begin{displaymath}
H(\{g=\ell\}\times\{t\})=\{g_t=\ell\}.
\end{displaymath}
In particular $N_\star(B,g_t,\ell)$ is constant over $t\in[0,1]$ for $\star\in\{\mathrm{ES},\mathrm{LS}\}$.
\end{lemma}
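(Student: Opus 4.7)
My plan is to realize $H$ as the time-$t$ flow of a suitable time-dependent stratified vector field $V$ on $B$. Concretely, I shall seek a continuous map $V : B \times [0,1] \to \R^d$ satisfying: (i) for each stratum $F$ and $x \in F$, $V_t(x) \in T_x F$ (so the flow preserves strata); and (ii) whenever $x \in F$ lies on the level set $\{g_t = \ell\}$, one has $\nabla_F g_t(x) \cdot V_t(x) = -h(x)$. Given such a field, the flow $H(x,t)$ of $V_t$ with $H(x,0)=x$ exists on all of $B \times [0,1]$ by boundedness on the compact set $B$, each slice $H(\cdot,t)$ is a stratum-preserving self-homeomorphism of $B$, and differentiating along the flow gives
\[ \frac{d}{dt}\, g_t(H(x,t)) = h(H(x,t)) + \nabla g_t(H(x,t)) \cdot V_t(H(x,t)) = 0 \]
whenever $H(x,\cdot)$ lies on the level set (using $V_t \in T_xF$, so $\nabla_F g_t \cdot V_t = \nabla g_t \cdot V_t$), hence $H(\{g=\ell\} \times \{t\}) = \{g_t = \ell\}$ as required.

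The natural local candidate on each stratum $F$ is
\[ V_t^F(x) \;:=\; -\,h(x) \,\frac{\nabla_F g_t(x)}{|\nabla_F g_t(x)|^2} , \]
which is tangent to $F$ and satisfies (ii) automatically. By the no-critical-point hypothesis, continuity of $\nabla_F g_t$ in $(x,t)$, and compactness of $B \times [0,1]$, this formula is well-defined on an open neighbourhood $\mathcal{N}_F \subset F \times [0,1]$ of $\{(x,t): x\in F,\,g_t(x)=\ell\}$, on which $|\nabla_F g_t|$ is uniformly bounded below. On $0$-dimensional strata no construction is needed: the hypothesis together with the convention that all vertices are critical points forces $g_t(v)\neq \ell$ at every vertex $v$ and every $t$.

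The main obstacle, which I expect to carry the bulk of the technical work, is patching the per-stratum fields $V_t^F$ into a single globally continuous $V_t$ on $B$ obeying (i) everywhere and (ii) on the full level set. I would proceed by induction on the dimension of the strata: having defined $V_t$ on every stratum of dimension $\le k-1$ consistently with (i)--(ii), extend to a $k$-stratum $F$ by requiring the $T\partial F$-components of $V_t$ on $F$ to agree with the field already built on $\partial F$, and forcing the components of $V_t$ normal to $\partial F$ within $F$ to vanish as $x \to \partial F$. The product structure of the box facilitates this, since each stratum is a product of open intervals and singletons, so $T_xF$ splits coordinate-wise and one may use a stratification-compatible cutoff or partition of unity. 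Crucially, condition (ii) is preserved by this cutoff on the level set itself, because the components of $\nabla_F g_t$ normal to $\partial F$ do not contribute to $\nabla_{\partial F} g_t$, so the inductively constructed field on $\partial F$ already accounts for the relevant equation there.

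Once $V$ is in hand, the resulting flow $H$ is the desired stratified isotopy. The final assertion that $N_\star(B, g_t, \ell)$ is constant in $t$ is then immediate: each $H(\cdot,t)$ is a self-homeomorphism of $B$ preserving the stratification (so in particular $\partial B$) and mapping $\{g=\ell\}$ bijectively onto $\{g_t=\ell\}$, hence induces a bijection between the connected components of $\{g \ge \ell\}$ (resp.\ $\{g=\ell\}$) contained in $B$ and those of $\{g_t\ge\ell\}$ (resp.\ $\{g_t=\ell\}$) contained in $B$.
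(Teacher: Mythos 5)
Your strategy---constructing a stratum-preserving time-dependent vector field whose flow carries $\{g=\ell\}$ to $\{g_t=\ell\}$---is the right idea, and is in effect a by-hand unwinding of what the paper does: the paper considers $G(x,t)=(g_t(x),t)$ on a neighbourhood of the level-set locus in $B\times[0,1]$, observes $G$ is a proper stratified submersion, and invokes Thom's first isotopy lemma (whose proof is exactly the construction of a controlled vector field and its flow), followed by the isotopy extension theorem to extend to all of $B$. Your local candidate $V^F_t = -h\,\nabla_F g_t/|\nabla_F g_t|^2$, the compactness argument for a uniform lower bound on $|\nabla_F g_t|$ near the level set, the treatment of $0$-strata, and the final deduction that a stratified self-homeomorphism of $B$ preserves the interior component count, are all correct.

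However, the patching step---which you rightly identify as carrying the bulk of the work---contains a genuine gap. You assert that ``condition (ii) is preserved by this cutoff on the level set itself'' because the normal components of $\nabla_F g_t$ do not contribute to $\nabla_{\partial F}g_t$, but this only justifies the identity \emph{at} $\partial F$, not in a one-sided neighbourhood within $F$. Write $V_t=V_t^{\parallel}+V_t^{\perp}$ for the components tangent and normal (inside $F$) to $\partial F$. Your construction forces $V_t^{\perp}\to 0$ as $x\to\partial F$, so there $\nabla_F g_t\cdot V_t\approx \nabla_F^{\parallel}g_t\cdot V_t^{\parallel}$; this equals $-h$ on $\partial F$ by induction, but for $x\in F$ strictly off $\partial F$ the extension of the boundary field has no reason to satisfy $\nabla_F^{\parallel}g_t(x)\cdot V_t^{\parallel}(x)=-h(x)$. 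To repair this one must add a correction (e.g.\ a multiple of $\nabla_F g_t/|\nabla_F g_t|^2$ times the error $E=h+\nabla_F g_t\cdot V_t^{\parallel}$), check it vanishes continuously at every incident boundary face simultaneously (faces meet along lower-dimensional strata, so the corrections must be made compatible there too), and finally establish that the resulting globally merely-continuous field actually generates a well-defined stratified flow; none of these points is addressed. These are precisely the difficulties packaged by the controlled-vector-field machinery behind Thom's isotopy lemma, which is why the paper cites it rather than constructing the field explicitly.
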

\begin{proof}
Let us consider the domain $B\times[0,1]$ and the function $G(x,t)=(g_t(x),t)$. The main idea of the lemma is that the $t$-sections of the preimage $G^{-1}(\ell,[0,1])$ are level sets of $g_t$ and this preimage is a trivial (stratified) fibration over the level set of $g$. Hence the (stratified) topology of level sets of $g_t$ does not change with $t$.

First, we claim that there exists a neighbourhood $W$ of $G^{-1}(\ell,[0,1])$ such that  the function $g_t$ has no stratified critical points in $W$. If this were not the case then by compactness we could find a sequence  $(x_n,t_n)$ such that $x_n$ is a stratified critical point of $g_t$ and $(x_n,t_n)\to (x,t)$ where $g_t(x)=\ell$.  By taking a further subsequence we may assume that all $x_n$ are in the same stratum $F$. It may not be the case that $F_x=F$ however we must have $F_x\subseteq\overline{F}$ (since $x_n\to x$) and therefore by continuity we have $\nabla_{F_x}g(x)=0$, which contradicts the fact that $\{g_t=\ell\}$ has no stratified critical points. Reducing $W$ further we can assume that $W$ is of the form $\{(x,t)\in B\times[0,1]: |g_t(x)-\ell|<\epsilon\}$ for some $\epsilon>0$. We can endow $W$ with a stratification inherited from the stratification of $B$. 

The absence of stratified critical points ensures that $G$ is a submersion when restricted to each stratum. It follows from the continuity of $g_t$ and boundedness of $W$ that $G$ is a proper map (i.e.\ the inverse image of any compact set is compact). Using these two facts, we may apply Thom's first isotopy lemma \cite[Section~8]{mat73} to claim that $W$ is a trivial fibration over $G^{-1}((\ell-\epsilon,\ell+\epsilon),0)$ and $G^{-1}(\ell,[0,1])$ is a trivial fibration over $G^{-1}(\ell,0)=g^{-1}(\ell)$. Moreover, these fibrations respect stratification. The proof of Thom's lemma is based on the construction of a flow $x_t$ which preserves the stratification, trivializes the fibration and along which $g_t(x_t)$ is constant. This immediately implies that there is a stratified isotopy between the level sets of the functions $g_t$. 
By the isotopy extension theorem \cite[Corollary~1.4]{edwards1971} we may extend this to an isotopy of $B$.
\end{proof}

\begin{proof}[Proof of Lemma~\ref{l:Topological_stability}]
    Since $g$ has no stratified critical points in $B$, there exists $\epsilon>0$ such that for all $x\in B$
    \begin{displaymath}
    \|\nabla_{F_x}g(x)\|+\lvert g(x)-\ell\rvert\geq\epsilon
    \end{displaymath}
    where we recall that $F_x$ denotes the stratum of $B$ containing $x$. If
    \begin{displaymath}
        \|g-g^\prime\|_{C^1(U)}<\epsilon,
    \end{displaymath}
    then for all $t\in[0,1]$ we see that $g+t(g^\prime-g)$ has no stratified critical points at level $\ell$. Hence by Lemma~\ref{l: morse continuity} we conclude that $N_\star(B,g,\ell)=N_\star(B,g^\prime,\ell)$, as required.
\end{proof}

\begin{proof}[Proof of Lemma \ref{l:change}]
We first consider the case that $x\in\partial B$. We choose a box $B^\prime\subset U$ which contains $x$ in its interior (so $B^\prime$ is not a subset of $B$). We assume $B^\prime$ is small enough that it intersects only one component of $\{g\geq\ell\}$ (i.e.\ the component containing $x$) and that $g$ has no stratified critical points at level $\ell$ in $B^\prime$ (this is possible since $x$ is non-degenerate). Let $h\in C^2_c(B^\prime)$ such that $h(x) > 0$. Then for any $\delta>0$
\begin{displaymath}
\{g+\delta h\geq\ell\}\cap B\setminus B^\prime=\{g-\delta h\geq\ell\}\cap B\setminus B^\prime.
\end{displaymath}
Therefore any change in the number of excursion/level set components in $B$ must be due to changes in $B^\prime\cap B$. Now choose $\delta>0$ sufficiently small that $g+th$ has no stratified critical points at level $\ell$ in $B^\prime$ for all $\lvert t\rvert\leq\delta$. By Lemma~\ref{l: morse continuity} there exists a stratified isotopy between the sets $\{g-\delta h\geq\ell\}\cap B^\prime$ and $\{g+\delta h\geq\ell\}\cap B^\prime$. In particular, both of these sets consist of a single component and have the same intersection with $\partial B^\prime$ (since $h\equiv 0$ on $\partial B^\prime$). This argument shows that the topology of the level set $\{g+th=\ell\}$ on a neighbourhood of $B$ does not change as $t$ varies in $[-\delta,\delta]$. Moreover only one component of this level set varies with $t$.

Consider the component of $\{g=\ell\}$ containing $x$; if this component intersects $\R^d\setminus B$ then the same will be true of the corresponding components of $\{g+th=\ell\}$ and so there will be no change in the number of interior components, i.e.\ $N_\star(B,g-\delta h,\ell)=N_\star(B,g+\delta h,\ell)$ (see the first two panels of Figure~\ref{fig:Topological}). Now assume that the level component containing $x$ does not intersect $\R^d\setminus B$. Then, since $B^\prime$ has no stratified critical points, the component of $\{g\geq\ell\}\cap B^\prime$ containing $x$ must intersect $\partial B$ only at $x$ (i.e.\ the excursion component touches the boundary at a single point). Since $h(x)>0$, the set $\{g+\delta h\geq\ell\}$ must contain a neighbourhood of $x$ whereas $\{g-\delta h\geq\ell\}$ does not contain $x$. Therefore the number of \emph{interior} components increases by one as we pass from $\{g+\delta h\geq\ell\}$ to $\{g-\delta h\geq\ell\}$ (see the bottom panel of Figure~\ref{fig:Topological}). Note that this does not depend on the choice of $h$. The same conclusion applies to the number of level set components, since these are the boundaries of the excursion sets.

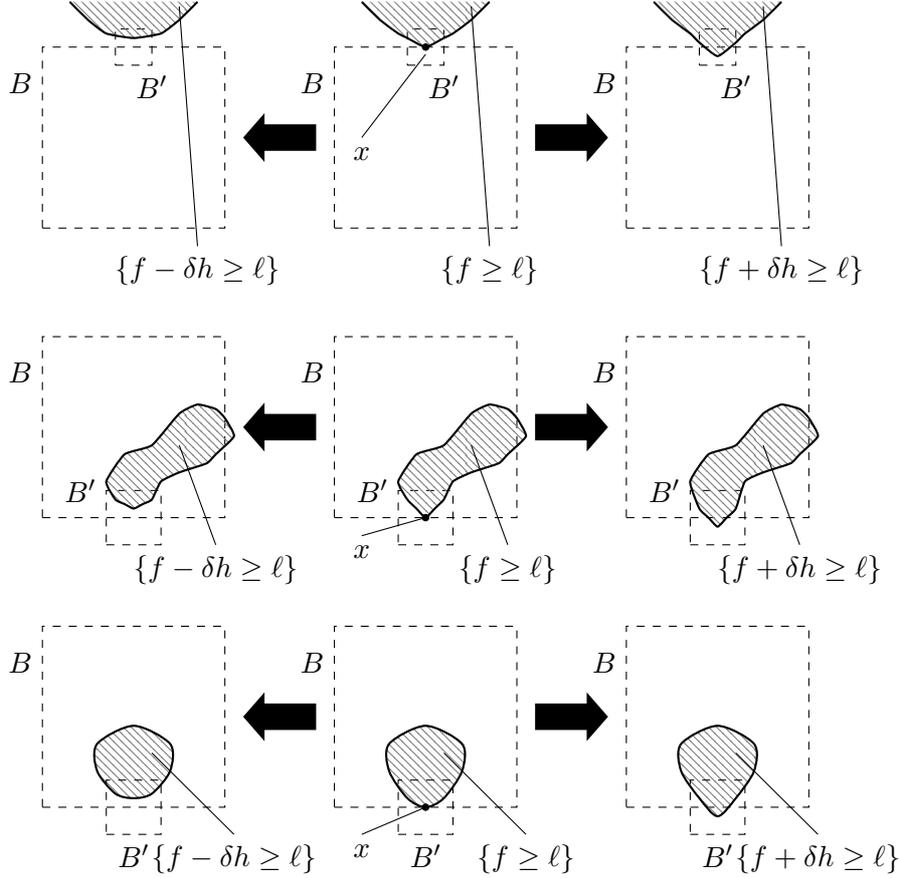
\begin{figure}[ht]
    \centering
\begin{tikzpicture}[scale=1.2]
\draw[dashed] (-1,-1) rectangle (1,1);
\node[left] at (-1,0.6) {$B$};
\draw[dashed] (-0.2,0.8) rectangle (0.2,1.2);
\node[below] at (0.2,0.8) {$B^\prime$};
\draw[thick,pattern=north west lines,pattern color=gray] plot [smooth,tension=0.3] coordinates {(-0.7,1.5)(-0.5,1.3)(-0.3,1.15)(-0.2,1.1)(0,1)(0.2,1.1)(0.3,1.15)(0.5,1.3)(0.7,1.5)};
\node[below] at (0.7,-1.2) {$\{f\geq\ell\}$};
\draw (0.7,-1.2)--(0.5,1.45);
\draw[fill] (0,1) circle (1pt);
\draw (0,0.92)--(-.7,0);
\node[below] at (-0.7,0) {$x$};

\draw[-{Triangle[width=18pt,length=8pt]}, line width=10pt](-1.2,0) -- (-2,0);
\draw[-{Triangle[width=18pt,length=8pt]}, line width=10pt](1.2,0) -- (2,0);

\begin{scope}[shift={(-3.2,0)}]

\draw[dashed] (-1,-1) rectangle (1,1);
\node[left] at (-1,0.6) {$B$};
\draw[dashed] (-0.2,0.8) rectangle (0.2,1.2);
\node[below] at (0.2,0.8) {$B^\prime$};
\draw[thick,pattern=north west lines,pattern color=gray] plot [smooth,tension=0.3] coordinates {(-0.7,1.5)(-0.5,1.3)(-0.3,1.15)(0,1.1)(0.3,1.15)(0.5,1.3)(0.7,1.5)};
\node[below] at (0.7,-1.2) {$\{f-\delta h\geq\ell\}$};
\draw (0.7,-1.2)--(0.5,1.45);

\end{scope}

\begin{scope}[shift={(3.2,0)}]
\draw[dashed] (-1,-1) rectangle (1,1);
\node[left] at (-1,0.6) {$B$};
\draw[dashed] (-0.2,0.8) rectangle (0.2,1.2);
\node[below] at (0.2,0.8) {$B^\prime$};
\draw[thick,pattern=north west lines,pattern color=gray] plot [smooth,tension=0.3] coordinates {(-0.7,1.5)(-0.5,1.3)(-0.3,1.15)(-0.2,1.05)(0,.9)(0.2,1.05)(0.3,1.15)(0.5,1.3)(0.7,1.5)};
\node[below] at (0.7,-1.2) {$\{f+\delta h\geq\ell\}$};
\draw (0.7,-1.2)--(0.5,1.45);

\end{scope}

\begin{scope}[shift={(0,-3.2)}]
\draw[dashed] (-1,-1) rectangle (1,1);
\node[left] at (-1,0.6) {$B$};
\draw[dashed] (-0.3,-1.3) rectangle (0.3,-0.7);
\node[left] at (-0.3,-0.7) {$B^\prime$};
\draw[thick,pattern=north west lines,pattern color=gray] plot [smooth cycle,tension=0.3] coordinates {(-0.3,-0.6)(-0.2,-0.8)(-0.1,-0.9)(0,-1)(0.1,-0.9)(0.2,-0.8)(0.3,-0.6)(0.5,-0.5)(0.8,-0.4)(0.9,-0.3)(1.1,-0.1)(1,0.1)(0.9,0.2)(0.7,0.25)(0.5,0.15)(0.2,-0.2)(-0.1,-0.3)};
\node[below] at (0.9,-1.3) {$\{f\geq\ell\}$};
\draw (0.9,-1.3)--(0.5,-0.2);
\draw[fill] (0,-1) circle (1pt);
\draw (0,-1)--(-.7,-1.2);
\node[below] at (-0.7,-1.2) {$x$};

\draw[-{Triangle[width=18pt,length=8pt]}, line width=10pt](-1.2,0) -- (-2,0);
\draw[-{Triangle[width=18pt,length=8pt]}, line width=10pt](1.2,0) -- (2,0);

\begin{scope}[shift={(-3.2,0)}]
\draw[dashed] (-1,-1) rectangle (1,1);
\node[left] at (-1,0.6) {$B$};
\draw[dashed] (-0.3,-1.3) rectangle (0.3,-0.7);
\node[left] at (-0.3,-0.7) {$B^\prime$};
\draw[thick,pattern=north west lines,pattern color=gray] plot [smooth cycle,tension=0.3] coordinates {(-0.3,-0.6)(-0.2,-0.8)(-0.1,-0.85)(0,-0.9)(0.1,-0.85)(0.2,-0.8)(0.3,-0.6)(0.5,-0.5)(0.8,-0.4)(0.9,-0.3)(1.1,-0.1)(1,0.1)(0.9,0.2)(0.7,0.25)(0.5,0.15)(0.2,-0.2)(-0.1,-0.3)};
\node[below] at (0.9,-1.3) {$\{f-\delta h\geq\ell\}$};
\draw (0.9,-1.3)--(0.5,-0.2);

\end{scope}

\begin{scope}[shift={(3.2,0)}]
\draw[dashed] (-1,-1) rectangle (1,1);
\node[left] at (-1,0.6) {$B$};
\draw[dashed] (-0.3,-1.3) rectangle (0.3,-0.7);
\node[left] at (-0.3,-0.7) {$B^\prime$};
\draw[thick,pattern=north west lines,pattern color=gray] plot [smooth cycle,tension=0.3] coordinates {(-0.3,-0.6)(-0.2,-0.9)(-0.1,-1)(0,-1.1)(0.1,-1)(0.2,-0.9)(0.3,-0.6)(0.5,-0.5)(0.8,-0.4)(0.9,-0.3)(1.1,-0.1)(1,0.1)(0.9,0.2)(0.7,0.25)(0.5,0.15)(0.2,-0.2)(-0.1,-0.3)};
\node[below] at (0.9,-1.3) {$\{f+\delta h\geq\ell\}$};
\draw (0.9,-1.3)--(0.5,-0.2);

\end{scope}
\end{scope}

\begin{scope}[shift={(0,-6.4)}]
\draw[dashed] (-1,-1) rectangle (1,1);
\node[left] at (-1,0.6) {$B$};
\draw[thick,pattern=north west lines,pattern color=gray] plot [smooth cycle,tension=0.5] coordinates {(0,-1)(.2,-0.9)(0.4,-0.6)(0.4,-0.3)(0,-0.1)(-0.4,-0.3)(-0.4,-0.6)(-0.2,-0.9)};
\draw[dashed] (-0.3,-1.3) rectangle (0.3,-0.7);
\node[below] at (0,-1.3) {$B^\prime$};
\node[below] at (1.1,-1.3) {$\{f\geq\ell\}$};
\draw (1.1,-1.3)--(0.2,-0.4);
\draw[fill] (0,-1) circle (1pt);
\draw (0,-1)--(-.7,-1.3);
\node[below] at (-0.7,-1.3) {$x$};

\draw[-{Triangle[width=18pt,length=8pt]}, line width=10pt](-1.2,0) -- (-2,0);
\draw[-{Triangle[width=18pt,length=8pt]}, line width=10pt](1.2,0) -- (2,0);

\begin{scope}[shift={(-3.2,0)}]
\draw[dashed] (-1,-1) rectangle (1,1);
\node[left] at (-1,0.6) {$B$};
\draw[thick,pattern=north west lines,pattern color=gray] plot [smooth cycle,tension=0.5] coordinates {(0,-0.9)(.2,-0.85)(0.4,-0.6)(0.4,-0.3)(0,-0.1)(-0.4,-0.3)(-0.4,-0.6)(-0.2,-0.85)};
\draw[dashed] (-0.3,-1.3) rectangle (0.3,-0.7);
\node[below] at (0,-1.3) {$B^\prime$};
\node[below] at (1.1,-1.3) {$\{f-\delta h\geq\ell\}$};
\draw (1.1,-1.3)--(0.2,-0.4);
\end{scope}

\begin{scope}[shift={(3.2,0)}]
\draw[dashed] (-1,-1) rectangle (1,1);
\node[left] at (-1,0.6) {$B$};
\draw[thick,pattern=north west lines,pattern color=gray] plot [smooth cycle,tension=0.5] coordinates {(0,-1.1)(.2,-0.9)(0.4,-0.6)(0.4,-0.3)(0,-0.1)(-0.4,-0.3)(-0.4,-0.6)(-0.2,-0.9)};
\draw[dashed] (-0.3,-1.3) rectangle (0.3,-0.7);
\node[below] at (0,-1.3) {$B^\prime$};
\node[below] at (1.1,-1.3) {$\{f+\delta h\geq\ell\}$};
\draw (1.1,-1.3)--(0.2,-0.4);
\end{scope}
\end{scope}

\end{tikzpicture}
\caption{Stratified critical points on the boundary and the effect of local perturbations on excursion/level sets. The excursion sets $\{f\geq\ell\}$ in the top and middle panels intersect $\R^d\setminus B$ and so they do not contribute to the (interior) component count regardless of the local perturbation around $x$. The excursion set $\{f\geq\ell\}$ in the bottom panel is interior but touches the boundary at $x$, therefore the component count is changed by a local perturbation.}
\label{fig:Topological}
\end{figure}

Now let us assume that $x$ is in the interior of $B$. Let $B^\prime\subset B$ be a box which intersects only one component of $\{g\geq\ell\}\cap B$ and contains $x$ in its interior. Let $h\in C^2_c(B^\prime)$ with $h(x)>0$ and let $V$ be an open ball around $x$ on which $h>0$. Then for $t\in\R$
\begin{equation}\label{e:Morse1}
    \{g+th\geq\ell\}\cap V=\{G\geq t\}\cap V
\end{equation}
where $G:=(g-\ell)/h$. Moreover since $\nabla G=h^{-1}\nabla g-h^{-2}(g-\ell)\nabla h$ we see that $x$ is a critical point for $G$ at level $0$. Calculating the Hessian of $G$ in a similar way shows that $x$ is a non-degenerate critical point for $G$. By the Morse lemma \cite[Theorem 1.1.12]{nic11} (and reducing the size of $V$ if necessary) there is a $C^2$-chart $\phi:V\to \R^d$ such that $\phi(x)=0$ and 
\begin{equation}\label{e:Morse2}
    G\circ\phi^{-1}(y_1,\dots,y_d)=-y_1^2-\dots-y_{i}^2+y_{i+1}^2+\dots+y_d^2
\end{equation}
where $i$ is the index of the critical point $x$. (Note that this index depends on $g$ but not on $h$.)

We consider how the excursion sets of $g+th$ vary on each of the sets $V$, $B^\prime\setminus V$ and $B\setminus B^\prime$. Let $\delta>0$ be small enough that $g+th$ has no stratified critical points at level $\ell$ in $B^\prime\setminus V$ for all $\lvert t\rvert\leq \delta$. By Lemma~\ref{l: morse continuity} there exists a stratified isotopy between the sets
\begin{displaymath}
    \{g+\delta h\geq\ell\}\cap B^\prime\setminus V\quad\text{and}\quad \{g-\delta h\geq\ell\}\cap B^\prime \setminus V.
\end{displaymath}
(Strictly speaking we must repeat the proof of Lemma~\ref{l: morse continuity} for $B^\prime\setminus V$ since this is not a box, however the arguments are identical.) Since $h$ is supported on $B^\prime$,
\begin{displaymath}
    \{g+\delta h\geq\ell\}\cap B\setminus B^{\prime}=\{g-\delta h\geq\ell\}\cap B\setminus B^{\prime}.
\end{displaymath}
Therefore any change in the topology of excursion/level sets must be captured in $V$.

By \eqref{e:Morse1} and \eqref{e:Morse2} we see that if $i=0$ then as $t$ moves from $-\delta$ to $\delta$ the excursion set changes from the empty set to a ball (and the reverse occurs if $i=n$). In either case the number of excursion/level sets either increases or decreases by one. If $i\in\{1,n-1\}$, then the level set goes from a one-sheet hyperboloid to two-sheet hyperboloid (with a cone at $t=0$). In this case we gain (or lose) one level set component in $V$. The number of excursion set components might be constant (if $\lambda=n-1$) or change by one (if $\lambda=1$). In all other cases (i.e. $1<\lambda<n-1$) the corresponding hyperbolic surface is connected for all $t$ and separates $\R^n$ into two domains. This means that the number of components does not change.  

Note that the above argument depends only on the canonical form, which is independent of $h$, hence the change in the number of components is the same for all functions $h$.
\end{proof}

\begin{proof}[Proof of Lemma~\ref{l:AsympStab}]
Let $A$ be the component of $\{g\geq \ell\}$ containing $x$ and let $A_1,\dots, A_n$ be the components of $A\setminus\{x\}$. (The Morse lemma \cite[Theorem~1.1.12]{nic11} implies that there are only finitely many such components.) By reordering, we may assume that $A_1,\dots, A_m$ are bounded and $A_{m+1},\dots,A_n$ are unbounded for some $m\in\{0,\dots,n\}$. 

Now fix an $r>1$ such that $A_1,\dots,A_m\subset x+\Lambda_r$ (if $m=0$ then simply choose $r=1$). We claim that $r$ satisfies the conclusion of the lemma, i.e.\ there exists a $\Delta\in\{+,-,0\}$ such that for any box $B\supset x+\Lambda_r$ the point $x$ is $(\Delta,B)$-pivotal. By Lemma~\ref{l:change}, there exists $\Delta\in\{+,-,0\}$ such that $x$ is $(\Delta,x+\Lambda_r)$-pivotal. By definition, for any sufficiently small neighbourhood $W$ of $x$, any function $h\in C^2_c(W)$ satisfying $h(x)>0$ and any $\delta$ sufficiently small
\begin{equation}\label{e:AsympStab1}
    N_{\star}(x+\Lambda_r,g+\delta h,\ell)-N_{\star}(x+\Lambda_r,g-\delta h,\ell)=\begin{cases}
        1 &\text{if }\Delta=+\\
        -1 &\text{if }\Delta=-\\
        0 &\text{if }\Delta=0
    \end{cases}.
\end{equation}
If $B\supset(x+\Lambda_r)$ then by definition
\begin{equation}\label{e:AsympStab2}
    N_\star(B,g-\delta h,\ell)=N_\star(x+\Lambda_r,g-\delta h,\ell)+\tilde{N}_\star(B,x+\Lambda_r,g-\delta h,\ell)
\end{equation}
where the final term denotes the number of components of $\{g-\delta h\geq\ell\}$ which are contained in $B$ and intersect $B\setminus(x+\Lambda_r)$. Any component contributing to this latter term must be bounded and is not contained in $x+\Lambda_r$, so cannot intersect $A_1\cup\dots\cup A_m$. Therefore there is a neighbourhood $W^\prime$ of $x$ which does not intersect any component contributing to $\tilde{N}_\star(B,x+\Lambda_r,g-\delta h,\ell)$. These components are disjoint from perturbations on $W^\prime$ and therefore
\begin{displaymath}
    \tilde{N}_\star(B,x+\Lambda_r,g-\delta h,\ell)=\tilde{N}_\star(B,x+\Lambda_r,g+\delta h,\ell)
\end{displaymath}
for $h\in C^2_c(W^\prime)$. Combining this with \eqref{e:AsympStab1}, \eqref{e:AsympStab2}, and the corresponding equation with $g+\delta h$ replacing $g-\delta h$ we have
\begin{displaymath}
    N_{\star}(B,g-\delta h,\ell)-N_{\star}(B,g+\delta h,\ell)=\begin{cases}
        1 &\text{if }\Delta=+\\
        -1 &\text{if }\Delta=-\\
        0 &\text{if }\Delta=0
    \end{cases}
\end{displaymath}
completing the proof.
\end{proof}

\begin{appendix}
\section{Non-degeneracy properties of smooth Gaussian fields}
In this appendix we collect some basic non-degeneracy properties of smooth Gaussian fields. Throughout we fix $k \in \mathbb{N}$ and let $f$ be a $C^k$-smooth Gaussian field on $\R^d$. We define $\mathcal{A}$ to be the collection of multi-indices on $d$ variables with order at most $k$, and for $A \subseteq \mathcal{A}$ we let \[ \textrm{deg} (A) = \max\{ |\alpha| : \alpha \in A\} \le k \]
be the largest order among indices in $A$. For $x \in \R^d$ and $A \subseteq \mathcal{A}$, we define the Gaussian vector $D^A f(x) = \big(   \partial^\alpha f(x)  \big)_{\alpha \in A}$.

\subsection{Non-degeneracy of the field and its derivatives}
Our first lemma concerns non-degeneracy properties of $D^A f(x)$, including when jointly evaluated at multiple points.

We say that a set of multi-indices $A \subseteq \mathcal{A}$ is \textit{spherical} if there exists a multi-index $\alpha' \in A$ such that
\[ \big\{ \alpha' + \alpha : \alpha \in \{(0,\dots,0), (2,0,\ldots,0) , (0,2,\ldots ,0) , \ldots , (0,0,\ldots,2) \} \big\} \subseteq A ,\]
and \textit{non-spherical} otherwise.
For example, if $d=k=3$ then
\[   \{ (1,0,0), (3,0,0), (1,2,0), (1,0,2) \} \quad \text{and} \quad \{ (1,0,1), (3,0,0), (1,2,0), (1,0,2)  \} \]
are respectively spherical and non-spherical. 

 \begin{lemma}
\label{l:nondegen1}
Suppose $f$ is stationary and let $\mu$ be its spectral measure. Let $x_1, \ldots, x_n \in \R^d$ be distinct points, and let $A_1, \ldots, A_n \subseteq \mathcal{A}$.
\begin{enumerate}
\item If the support of $\mu$ contains an open set, the Gaussian vector $(D^{A_i} f(x_i))_{1 \le i \le n}$ is non-degenerate.
\item Suppose each $A_i$ is non-spherical, and either (i) $n=1$, or (ii) $n=2$ and $\textrm{deg}(A_i) \le 1$, $i=1,2$. Then if the support of $\mu$ contains a scaled sphere $a \mathbb{S}^{d-1}$, $a >0$, the Gaussian vector $(D^{A_i} f(x_i))_{1 \le i \le n}$ is non-degenerate.
\end{enumerate}
\end{lemma}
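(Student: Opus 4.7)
My strategy is to work in the spectral domain via the Bochner representation $f(x) = \int e^{i x \cdot \xi}\, dW(\xi)$, where $W$ is the Hermitian Gaussian random measure with control measure $\mu$. A non-trivial linear dependence $\sum_{i,\alpha} c_{i,\alpha}\, \partial^\alpha f(x_i) = 0$ almost surely is equivalent to the polynomial-exponential
\[
P(\xi) = \sum_{i=1}^n Q_i(\xi)\, e^{i x_i \cdot \xi}, \qquad Q_i(\xi) = \sum_{\alpha \in A_i} c_{i,\alpha}\, (i\xi)^\alpha,
\]
vanishing on $\mathrm{supp}(\mu)$. So non-degeneracy of $(D^{A_i} f(x_i))_{1 \le i \le n}$ reduces to showing that $P \equiv 0$ on $\mathrm{supp}(\mu)$ forces all $c_{i,\alpha} = 0$.

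For part (1), I would use analytic continuation: $P$ extends to an entire function on $\mathbb{C}^d$, and since it vanishes on an open subset of $\R^d$ (which is a totally real submanifold of maximal real dimension), it must vanish identically on $\mathbb{C}^d$. The classical linear independence of $\{e^{i x_i \cdot \xi}\}_{i=1}^n$ (for distinct $x_i$) over the polynomial ring in $\xi$ then forces each $Q_i \equiv 0$, and hence all $c_{i,\alpha} = 0$. The linear independence is proved by induction on $n$, for instance by dividing through by one exponential and applying a polynomial differential operator that annihilates one surviving exponential to reduce the number of terms.

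For part (2)(i), where $n = 1$, the factor $e^{i x_1 \cdot \xi}$ never vanishes, so the single polynomial $Q := Q_1$ must vanish on the sphere $a\mathbb{S}^{d-1}$. The real vanishing ideal of this sphere in $\R[\xi_1, \ldots, \xi_d]$ is generated by $|\xi|^2 - a^2$ (shown by Euclidean division with respect to $\xi_1$, whose leading coefficient in $|\xi|^2 - a^2$ is the unit $1$), so $Q = (|\xi|^2 - a^2)\, R$ for some polynomial $R$. Assuming $R \not\equiv 0$, I would show that $\mathrm{supp}(Q)$ is spherical in the sense of the lemma, contradicting the hypothesis on $A_1 \supseteq \mathrm{supp}(Q)$. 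For this I would pick $\alpha^* \in \mathrm{supp}(R)$ extremal with respect to a carefully chosen monomial order on $\N^d$, and analyse the coefficients of $\xi^{\alpha^*}$ and of $\xi^{\alpha^* + 2 e_j}$ in the convolution computing $(|\xi|^2 - a^2)\, R$, with the order tailored so that only a single (necessarily non-zero) contribution survives in each. The main obstacle is controlling cancellation in this convolution: for a generic order several multi-indices in $\mathrm{supp}(R)$ can contribute to the same coefficient of $Q$ with opposite signs, so the choice of order and of $\alpha^*$ must be made in a way that forces non-cancellation, exploiting the specific structure of $|\xi|^2 - a^2$.

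For part (2)(ii), after dividing the identity through by $e^{i x_1 \cdot \xi}$ and writing $y = x_2 - x_1 \neq 0$, one needs $Q_1(\xi) + Q_2(\xi)\, e^{i y \cdot \xi} \equiv 0$ on $a\mathbb{S}^{d-1}$, with each $Q_j$ affine in $\xi$. I would parameterise $\xi = a(\cos\theta\, \hat y + \sin\theta\, \omega^\perp)$, where $\hat y = y/|y|$ and $\omega^\perp$ ranges over the unit sphere of $\hat y^\perp$ (a non-empty $(d-2)$-sphere for $d \ge 2$), and observe that the identity is linear in $\omega^\perp$. Varying $\omega^\perp$ forces the components of the linear parts of both $Q_1$ and $Q_2$ perpendicular to $y$ to vanish. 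The residual identity becomes a scalar one, $(\alpha_1 + \beta_1 s) + (\alpha_2 + \beta_2 s)\, e^{i L s} \equiv 0$ for $s = \cos\theta \in [-1, 1]$ and $L = a |y| \neq 0$. Since the left-hand side is entire in $s$ and vanishes on an interval, it vanishes identically on $\mathbb{C}$, and the standard linear independence of $\{1,\, s,\, e^{i L s},\, s\, e^{i L s}\}$ over $\mathbb{C}$ then forces all four coefficients to vanish, yielding $Q_1 \equiv Q_2 \equiv 0$.
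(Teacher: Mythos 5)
Your treatment of parts (1) and (2)(ii) is sound. For (1) you use analytic continuation of the entire function $P$ plus linear independence of $\{e^{ix_i\cdot\xi}\}_i$ over $\mathbb{C}[\xi]$; the paper instead works with the real and imaginary parts as real-analytic functions and closes with a growth-at-infinity argument to deduce $c=0$ from $g\equiv 0$. Both are standard and correct. For (2)(ii) your route is genuinely different from the paper's: you parameterise $a\mathbb{S}^{d-1}$, vary the tangential direction $\omega^\perp$ to kill the perpendicular linear parts, and reduce to a one-dimensional linear-independence statement, whereas the paper aligns $x_2-x_1$ with the $d$-th axis, restricts to the hyperplane $\{t_d=0\}$ to kill the tangential terms, and then observes that the residual function of $t_d$ has an open zero set. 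Your version is a bit cleaner; both are correct for $d\ge 2$.

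The problem is part (2)(i). You and the paper both pass to the factorisation $Q=(|\xi|^2-a^2)R$ and then aim to show that $R\not\equiv 0$ forces $\mathrm{supp}(Q)$ (and hence $A_1$) to be spherical. You flag the cancellation in the convolution as ``the main obstacle'' and propose to defeat it by a clever monomial order. This cannot work: the implication is false. Take $d=2$, $a=1$, $R(\xi)=\xi_2^2-\xi_1^2$; then
\[
Q(\xi)=(\xi_1^2+\xi_2^2-1)(\xi_2^2-\xi_1^2)=\xi_2^4-\xi_1^4+\xi_1^2-\xi_2^2,
\]
where the mixed term $\xi_1^2\xi_2^2$ cancels, so $\mathrm{supp}(Q)=\{(2,0),(0,2),(4,0),(0,4)\}$. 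No $\alpha'$ in this set has both $\alpha'+(2,0)$ and $\alpha'+(0,2)$ in the set, so it is non-spherical. Concretely, for the random plane wave in $d=2$ the vector $(\partial_1^2f(0),\partial_2^2f(0),\partial_1^4f(0),\partial_2^4f(0))$ satisfies the non-trivial a.s.\ relation $-\partial_1^2 f(0)+\partial_2^2 f(0)-\partial_1^4 f(0)+\partial_2^4 f(0)=0$, even though the corresponding index set is non-spherical, so the lemma is not true at the stated level of generality and the paper's phrase ``which contradicts the fact that $A_1$ is assumed non-spherical'' glosses over the same gap. What actually matters for the paper is only the specific $A$ appearing in Remark~\ref{r:nondegen} (the linear span of $f(0),\nabla f(0),\partial_v\nabla f(0),\partial_v^3 f(0)$); the right move is to verify the factorisation obstruction directly for that particular index set (where it does hold — its leading homogeneous part is degree~$3$ and one-dimensional in each variable, so cancellation cannot occur) rather than to chase the false general statement.
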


\begin{remark}
\label{r:nondegen}
In the paper we do not use the full strength of Lemma \ref{l:nondegen1}. Indeed we only use the fact that if the support of $\mu$ contains either (i) an open set, or (ii) $a \mathbb{S}^{d-1}$, then the following vectors are non-degenerate for $x \neq 0$ and $v \in \mathbb{S}^{d-1}$:
\begin{itemize}
    \item $(f(0),\nabla f(0), f(x), \nabla f(x)) $;
    \item $ (f(0), \nabla f(0), \partial_v \nabla f(0), \partial^3_v f(0) ) $;
     \item (only in case (i)) $(f(0),\nabla f(0), \nabla^2 f(0) )$.
\end{itemize}
Note in particular that the second vector can be written (after a linear transformation of full rank) as $D^A f(0)$ for a non-spherical $A \subset \mathcal{A}$.
\end{remark}

\begin{remark}
We conjecture that the restrictions on $n$ and $\textrm{deg}(A_i)$ in the second item of Lemma~\ref{l:nondegen1} are not necessary for the conclusion of the lemma, but their inclusion simplifies the proof and is sufficient for our needs. On the other hand, the restriction to non-spherical $A_i$ is necessary, for instance, if the support of $\mu$ is contained in a sphere $a \mathbb{S}^{d-1}$ then the vector $D^A f(0) = (f(0),\nabla^2 f(0))$, corresponding to a spherical $A \subset \mathcal{A}$, is degenerate.
\end{remark}

In the proof of Lemma \ref{l:nondegen1} we use some facts about the zero locus of certain analytic functions. For distinct $x_1, \ldots, x_n \in \R^d$, $A_1, \ldots, A_n \subseteq \mathcal{A}$, and real coefficients $c = (c_{i,\alpha})_{1 \le i \le n, \alpha \in A_i}$, define the function  
\[  g = g_{x_i;A_i;c} : \R^d  \to \mathbb{C}^d \ , \quad  g(t) = \sum_{1 \le i \le n,\alpha \in A_i} c_{i,\alpha} (-\mathrm{i} )^{\lvert\alpha\rvert}t^{\alpha}e^{-\mathrm{i} t\cdot x_i} , \]
and let $ Z = \{ t \in \R^d : g(t) = 0\}$ be its zero locus.

\begin{lemma}
\label{l:alggeo}
Fix distinct $x_1, \ldots, x_n \in \R^d$, $A_1, \ldots, A_n \subseteq \mathcal{A}$, and $c = (c_{i,\alpha})_{1 \le i \le n, \alpha \in A_i}$.
\begin{enumerate}
\item If $Z$ contains an open set, then $c=0$.
\item Suppose each $A_i$ is non-spherical, and either (i) $n=1$, $x_1 = 0$, or (ii) $n=2$, $x_1 = 0$, and $\deg(A_1), \deg(A_2) \le 1$. If $Z$ contains the sphere $\mathbb{S}^{d-1}$, then $c=0$.
\end{enumerate}
\end{lemma}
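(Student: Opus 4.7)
The plan is to prove items (1) and (2) separately, exploiting the real-analyticity of $g$ for the first and the polynomial structure of the vanishing ideal of $\mathbb{S}^{d-1}$ for the second. Throughout I will freely use that $g$ is a finite linear combination of functions $t^\alpha e^{-\mathrm{i} t \cdot x_i}$.

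For item (1), each summand is real-analytic on $\R^d$, so $g$ is real-analytic, and the identity principle upgrades vanishing on an open set to vanishing on all of $\R^d$. To deduce $c=0$ from $g \equiv 0$, I would identify $g$ (up to a fixed constant) with the Fourier transform of the compactly supported distribution $\nu := \sum_{i,\alpha} c_{i,\alpha} \partial^\alpha \delta_{x_i}$. Fourier inversion then forces $\nu = 0$ as a distribution, and pairing with smooth test functions supported in disjoint neighbourhoods of the distinct points $x_i$ decouples the $n$ clusters. For each fixed $i$, the linear independence of the functionals $\phi \mapsto \partial^\alpha \phi(x_i)$ on $C^\infty_c$ extracts each coefficient $c_{i,\alpha}$ individually.

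For item (2)(i), $g(t) = \sum_{\alpha \in A_1} c_{1,\alpha} (-\mathrm{i})^{|\alpha|} t^\alpha$ is a polynomial (since $x_1 = 0$), and the fact that the vanishing ideal of $\mathbb{S}^{d-1}$ in $\R[t_1,\ldots,t_d]$ is the principal ideal $(|t|^2-1)$ gives $g = (|t|^2 - 1)\, h$ for a unique polynomial $h$. Assuming for contradiction that $h \not\equiv 0$, decompose $h = \sum_d h_d$ into homogeneous components; then the homogeneous pieces of $g$ satisfy $g_d = |t|^2 h_{d-2} - h_d$. At the minimum degree $d_{\min}$ of $h$ this reads $g_{d_{\min}} = -h_{d_{\min}}$, forcing $\mathrm{supp}(h_{d_{\min}}) \subseteq A_1$. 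Picking $\alpha' \in \mathrm{supp}(h_{d_{\min}})$ extremal in degree-lexicographic order with the primary coordinate varying over $i = 1, \ldots, d$, the coefficient of $t^{\alpha' + 2 e_i}$ in $|t|^2 h_{d_{\min}}$ equals $h_{d_{\min},\alpha'}$ with no cancellation from other monomials of the same degree; plan then is to iterate up the homogeneous layers, showing that cancellations against the higher components $h_{d_{\min}+2}, h_{d_{\min}+4}, \ldots$ cannot simultaneously eliminate $t^{\alpha' + 2 e_i}$ from $g$ for every $i$, and hence to conclude $\alpha' + 2 e_i \in A_1$ for all $i$, contradicting non-sphericalness.

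For item (2)(ii), $g(t) = P_1(t) + P_2(t) e^{-\mathrm{i} t \cdot x_2}$ with $P_1, P_2$ polynomials of degree at most one. I would exploit the rotational symmetry of $\mathbb{S}^{d-1}$ by restricting to a two-dimensional great circle containing the direction $x_2/|x_2|$, parametrising by an angular variable $\theta$, and using the Jacobi--Anger expansion
\[ e^{-\mathrm{i} |x_2| \cos \theta} = \sum_{n \in \Z} (-\mathrm{i})^n J_n(|x_2|)\, e^{\mathrm{i} n \theta} \]
to write the restricted $g$ as a Fourier series in $\theta$ whose coefficients are linear combinations of the coefficients of $P_1$ and $P_2$ weighted by Bessel values $J_n(|x_2|)$. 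Since $P_1$ contributes only to Fourier modes $|n| \le 1$ while $P_2 e^{-\mathrm{i} t \cdot x_2}$ contributes to all modes, matching coefficients at any $|n| \ge 2$ with $J_n(|x_2|) \ne 0$ (which exists because each $J_n$ has only isolated zeros) forces $P_2 \equiv 0$; then $P_1$ vanishes on $\mathbb{S}^{d-1}$ and, being of degree at most one, must be identically zero. The non-spherical hypothesis plays no independent role here, being automatic from $\deg(A_i) \le 1$.

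The main obstacle will be the combinatorial bookkeeping in case (i), where cancellations between $|t|^2 h_d$ and $h_{d+2}$ at successive homogeneous levels can obscure which monomials of $g$ are genuinely forced into $A_1$. Making non-sphericalness of $A_1$ bite in full generality seems to require a careful induction on the number of non-trivial homogeneous layers of $h$, together with a coordinate-by-coordinate choice of degree-lex ordering; I suspect that in practice the authors restrict to the specific $A_1$ arising in Remark~\ref{r:nondegen}, for which a direct separation of the real and imaginary parts of $g$ and a comparison of coefficients against those of $(|t|^2-1)q$ for a generic affine $q$ yields all coefficients of $c$ at once, bypassing the need for a general combinatorial argument.
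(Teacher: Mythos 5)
Your routes for items (1) and (2)(ii) are valid but differ from the paper's in ways worth noting. For (1), the paper also uses real-analyticity of the real and imaginary parts $g_1,g_2$ of $g$ to get $g\equiv 0$, but deduces $c=0$ by a short asymptotic argument rather than distribution theory: writing $g(t)=\sum_{\alpha'}(-\mathrm{i})^{|\alpha'|}t^{\alpha'}h_{\alpha'}(t)$ with $h_{\alpha'}(t)=\sum_i c_{i,\alpha'}e^{-\mathrm{i}t\cdot x_i}$, each $h_{\alpha'}$ is bounded and nonzero almost everywhere, so $c\neq 0$ would force $|g(t)|\to\infty$ along a suitable ray. For (2)(ii), the paper rotates coordinates so $x_2$ lies on the $e_d$-axis and restricts $g_1,g_2$ to $\{t_d=0\}$, where the exponential factor becomes $1$; the resulting constant and linear expressions vanish on $\mathbb{S}^{d-2}$, forcing their coefficients to zero, after which $g$ depends only on $t_d$ and $Z$ contains an open slab, so item (1) applies. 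That sidesteps Bessel functions entirely and also closes the loose end in your plan: matching Jacobi--Anger coefficients on a single great circle only shows that the restriction of $P_2$ to that circle vanishes, and you would have to vary the circle to conclude $P_2\equiv 0$ as a polynomial.

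The genuine gap is in (2)(i), and it afflicts both your plan and the paper's written argument. Both reduce to the claim: if $Q=1-|t|^2$ divides a nonzero real polynomial $P$ with $\mathrm{supp}(P)\subseteq A_1$, then $A_1$ is spherical. With sphericalness defined so as to require $\alpha'\in A$, this is false once $\deg(A_1)\geq 4$. Take $d=2$ and
\[
P=(1-t_1^2-t_2^2)(t_1^2-t_2^2)=t_1^2-t_2^2-t_1^4+t_2^4,
\]
so that one may take $A_1=\mathrm{supp}(P)=\{(2,0),(0,2),(4,0),(0,4)\}$ and choose $c_{1,\alpha}$ to reproduce $P$. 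No $\alpha'\in A_1$ has both $\alpha'+(2,0)$ and $\alpha'+(0,2)$ in $A_1$ --- the missing monomial $t_1^2t_2^2$ is exactly the cancellation you flagged, since $|t|^2(t_1^2-t_2^2)=t_1^4-t_2^4$ --- so $A_1$ is non-spherical while $g\neq 0$ vanishes on $\mathbb{S}^1$. Hence your iteration over homogeneous layers cannot be made to work in general: the statement it is trying to prove is not true. Your instinct that the intended use case is the specific small index sets of Remark~\ref{r:nondegen} is right --- for $\deg(A_1)\leq 3$ the cofactor $h$ in $P=Qh$ is affine, and a direct check of $Qh$ produces a spherical witness in $\mathrm{supp}(P)$ --- but the paper's written proof does not explicitly make that restriction, and neither the Nullstellensatz step nor any combinatorial bookkeeping suffices for arbitrary non-spherical $A_1$.
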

\begin{proof}
In both statements it suffices to prove that $g = 0$, since this implies that $c=0$ as follows. Suppose $c \neq 0$, and rewrite $g$ as
\begin{displaymath}
    g(t)=\sum_{\alpha'\in\mathcal{A}}(-\mathrm{i})^{\lvert\alpha'\rvert}t^{\alpha'} h_{\alpha'}(t) \ , \quad h_{\alpha'}(t) = \sum_{ \{1 \le i \le n, \alpha \in A_i : \alpha=\alpha' \} }  c_{i,\alpha} e^{- \textrm{i}t\cdot x_i}.
\end{displaymath}
Noting that $h_{\alpha'}(t)$ is bounded and non-zero almost everywhere, we may take $\lvert t\rvert\to\infty$ in such a way that $|g(t)| \to \infty$, and hence $g \neq 0$. So it remains to prove that $g$ is identically zero.

We separate $g$ into its real and imaginary parts
\[  g_1: \R^d \to \R^d \ , \quad g_1(t) = \sum_{i=1}^n\sum_{\substack{\alpha \in A_i \\ |\alpha| = 2j }} c_{i,\alpha} (-1)^j t^{\alpha} \cos(t\cdot x_i ) +\sum_{\substack{\alpha \in A_i \\ |\alpha| = 2j+1 }} c_{i,\alpha} (-1)^{j+1} t^{\alpha} \sin(t\cdot x_i )  \]  
and
\[ g_2: \R^d \to \R^d \ , \quad  g_2(t) = \sum_{i=1}^n\sum_{\substack{\alpha \in A_i \\ |\alpha| = 2j }} c_{i,\alpha} (-1)^{j+1} t^{\alpha} \sin(t\cdot x_i ) +\sum_{\substack{\alpha \in A_i \\ |\alpha| = 2j+1 }} c_{i,\alpha} (-1)^{j+1} t^{\alpha} \cos(t\cdot x_i )  ,  \]
both of whose zero locus contains $Z$.

For the first statement, since $g_1$ and $g_2$ are real-analytic functions whose zero loci contain open sets, $g_1$ and $g_2$ are identically zero, hence so is $g$. 

For the second statement, we split into the two cases (i) and (ii). 
In case (i), since $n=1$ and $x_1 = 0$, the functions
\[ P_1 : \mathbb{C}^d \to \mathbb{C} \ , \quad P_1(t)  = g_1(t)  = \sum_{\substack{\alpha \in A_1 \\ |\alpha| = 2j }} c_{1,\alpha} (-1)^{j} t^{\alpha} \]
and
\[ P_2 : \mathbb{C}^d \to \mathbb{C} \ , \quad P_2(t) = g_2(t) = \sum_{\substack{\alpha \in A_1 \\ |\alpha| = 2j+1}} c_{1,\alpha} (-1)^{j+1} t^{\alpha} \]
are polynomials that vanish on the real sphere $\mathbb{S}^{d-1}$, and hence also on the complex sphere since the former is Zariski dense in the latter. Therefore, by Hilbert's Nullstellensatz, both $P_1(t)$ and $P_2(t)$ have the polynomial $Q(t) = 1 - \sum_{1 \le i \le d}  t_i^2$ as a divisor, which contradicts the fact that $A_1$ is assumed non-spherical unless $g = 0$. In case (ii), without loss of generality we may assume $x_2$ lies on the $d^\textrm{th}$ coordinate axis. Then considering the restriction of $g_1$ and $g_2$ to $\{t_d = 0\}$, we see that
\[ P'_1 : \mathbb{C}^{d-1} \to \mathbb{C} \ , \quad P'_1(t) = g_1(t,0)  = c_{1,0} + c_{2,0}   \]
and
\[ P'_2 : \mathbb{C}^{d-1} \to \mathbb{C} \ , \quad P'_2(t) = g_2(t,0)=  (-1) \sum_{1 \le j \le d-1} (c_{1,j} + c_{2,j}) t_j    \]
are affine functions that vanish on the real sphere $\mathbb{S}^{d-2}$, and therefore have null coefficients. Thus viewed as a function on $\R^d$, $g(t)$ depends only on the coordinate $t_d$. Since it vanishes on $\mathbb{S}^{d-1}$, $Z$ contains an open set, and $g = 0$.
\end{proof}

\begin{proof}[Proof of Lemma \ref{l:nondegen1}] 
In the case $A_i = \{ (0,\ldots ,0) \}$ this is a well-known result (see \cite[Theorem 6.8]{wen05}), and Lemma \ref{l:alggeo} allows us to extend the argument to the general case.

Suppose for the sake of contradiction that the vector $(D^{A_i} f(x_i))_{1 \le i \le n}$ is degenerate. By definition this means that there exists non-zero $c = (c_{i,\alpha})_{1 \le i \le n, \alpha \in A_i}$ such that
\begin{align*}
0 = \mathrm{Var}\Big(\sum_{1 \le i \le n,\alpha \in A_i} c_{i,\alpha} \partial^{\alpha}f(x_i) \Big) = \int_{\R^d} \bigg\lvert \sum_{1 \le i \le n,\alpha \in A_i} c_{i,\alpha} (-\mathrm{i})^{\lvert\alpha\rvert}t^{\alpha}e^{-\mathrm{i}t\cdot x_i}\bigg\rvert^2 \; d\mu(t). 
\end{align*}
Therefore the function
\begin{displaymath}
    g(t) = \sum_{1 \le i \le n,\alpha \in A_i} c_{i,\alpha} (-\mathrm{i} )^{\lvert\alpha\rvert}t^{\alpha}e^{-\mathrm{i} t\cdot x_i}
\end{displaymath}
vanishes on the support of $\mu$, and the desired contradiction follows from Lemma~\ref{l:alggeo} (after transformation and scaling).
\end{proof}

\subsection{Non-degeneracy under the interpolation}
Let $\tilde{f}$ be an independent copy of the field $f$, and for $t \in [0,1]$ define $f^t = t f + \sqrt{1-t^2} \tilde{f}$. 

Our next lemma states that, for $t \in [0,1)$, the pair of fields $(f,f^t)$ inherits the non-degeneracy properties of $f$. In particular this shows the non-degeneracy of the conditioning \eqref{e:cond} in the definition of the pivotal measures in Theorem \ref{t:cf}.

\begin{lemma}
\label{l:nondegen2}
Let $x_1, x_2 \in \R^d$ be points, not necessarily distinct, and let $A_1, A_2 \subseteq \mathcal{A}$. Suppose the Gaussian vectors $D^{A_1} f(x_1)$ and $D^{A_2} f(x_1)$ are both non-degenerate. Then the Gaussian vector $(D^{A_1} f(x_1)  , D^{A_2} f^t(x_2))$ is non-degenerate for every $t \in [0,1)$.
\end{lemma}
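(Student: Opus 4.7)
The plan is to argue by contradiction, exploiting the independent decomposition $f^t = tf + \sqrt{1-t^2}\,\tilde f$. The key observation is that, since $t < 1$, the coefficient $\sqrt{1-t^2}$ in front of $\tilde f$ is strictly positive, so any linear combination of $D^{A_1}f(x_1)$ and $D^{A_2}f^t(x_2)$ that vanishes almost surely must have its $\tilde f$-component vanish almost surely, and this reduces the problem to the hypothesised non-degeneracy of the two single-field vectors.

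More precisely, suppose for contradiction that $(D^{A_1}f(x_1), D^{A_2}f^t(x_2))$ is degenerate. Then there exist coefficients $(a_\alpha)_{\alpha\in A_1}$ and $(b_\alpha)_{\alpha\in A_2}$, not all zero, such that almost surely
\begin{equation*}
\sum_{\alpha\in A_1} a_\alpha\,\partial^\alpha f(x_1) \;+\; \sum_{\alpha\in A_2} b_\alpha\,\partial^\alpha f^t(x_2) \;=\; 0.
\end{equation*}
Expanding $f^t = tf + \sqrt{1-t^2}\,\tilde f$, this becomes $X + Y = 0$ almost surely, where
\begin{equation*}
X := \sum_{\alpha\in A_1} a_\alpha\,\partial^\alpha f(x_1) + t\sum_{\alpha\in A_2} b_\alpha\,\partial^\alpha f(x_2), \qquad Y := \sqrt{1-t^2}\sum_{\alpha\in A_2} b_\alpha\,\partial^\alpha \tilde f(x_2)
\end{equation*}
are centred Gaussian variables that are independent (being measurable functions of $f$ and $\tilde f$ respectively).

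Next I would observe that if two independent centred Gaussian variables sum to zero almost surely, then each is identically zero. Applied to $Y$, and using $\sqrt{1-t^2}>0$, this forces $\sum_{\alpha\in A_2} b_\alpha\,\partial^\alpha \tilde f(x_2) = 0$ almost surely. Since $\tilde f$ has the same law as $f$, this contradicts the non-degeneracy of $D^{A_2}f(x_2)$ (which follows from the stated hypothesis, using stationarity if necessary to identify the laws of $D^{A_2}f(x_1)$ and $D^{A_2}f(x_2)$) unless $b_\alpha = 0$ for every $\alpha \in A_2$. Feeding this back into $X = 0$ almost surely then yields $\sum_{\alpha\in A_1} a_\alpha\,\partial^\alpha f(x_1) = 0$ almost surely, which by the assumed non-degeneracy of $D^{A_1}f(x_1)$ forces all $a_\alpha = 0$ as well, contradicting our choice of coefficients.

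There is essentially no obstacle here: the argument is purely algebraic, driven by independence of $f$ and $\tilde f$ and by the strict bound $t<1$. The only subtle point is ensuring that the non-degeneracy hypothesis for the second vector transfers from $x_1$ to $x_2$, which is automatic in the stationary setting considered throughout the paper.
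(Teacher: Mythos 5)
Your argument is correct. The essential engine is the same as the paper's: decompose $f^t = tf + \sqrt{1-t^2}\tilde f$, use independence of $f$ and $\tilde f$, and exploit that $\sqrt{1-t^2}>0$ for $t<1$. However, the route is genuinely different in flavour. The paper works through the determinant-of-covariance (DC) calculus established in Lemma~\ref{l:dc}: it factorises $\DC(D^{A_1}f(x_1),D^{A_2}f^t(x_2))$, lower-bounds the conditional DC by conditioning on all of $f$, and obtains the explicit quantitative lower bound $(1-t^2)^{|A_2|}\DC(D^{A_1}f(x_1))\DC(D^{A_2}f(x_2))$. You instead argue by contradiction at the level of coefficients: a degenerate linear combination splits as $X+Y=0$ with $X\perp Y$, independent centred Gaussians summing to zero must each vanish, and then each piece contradicts the assumed non-degeneracy of the single-field vectors. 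Your version is more elementary and self-contained; the paper's version has the advantage of slotting into the $\DC$ framework used repeatedly elsewhere and of producing an explicit lower bound (which is exactly the kind of quantitative estimate needed in Lemma~\ref{l:numbound}, for example). One small point you handled well: the statement as printed says $D^{A_2}f(x_1)$, and your remark about using stationarity to pass to $D^{A_2}f(x_2)$ is exactly right — the paper's own proof likewise invokes $\DC(D^{A_2}f(x_2))>0$ in the final line, so the two hypotheses coincide in the stationary setting.
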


\begin{proof}
By the third item of Lemma \ref{l:dc}
\begin{align*}
\textrm{DC} \big(D^{A_1} f(x_1), D^{A_2} f^t(x_2) \big) &=  \textrm{DC}\big(D^{A_1} f(x_1) \big) \textrm{DC} \big(  t D^{A_2} f(x_2) + \sqrt{1-t^2} D^{A_2} \tilde{f}(x_2) \, \big| \, D^{A_1} f(x_1)  \big)   \\
& \ge  \textrm{DC}\big(D^{A_1} f(x_1) \big)  \textrm{DC} \big(  t D^{A_2} f(x_2) + \sqrt{1-t^2} D^{A_2} \tilde{f}(x_2) \, \big| \, f   \big) .
\end{align*}
By the first item of Lemma \ref{l:dc}, and the equality in law of $f$ and $\tilde{f}$, the above is equal to
\begin{equation*}
 \textrm{DC}\big(D^{A_1} f(x_1) \big)  \textrm{DC} \big(  \sqrt{1-t^2} D^{A_2} \tilde{f} (x_2) \big)  =  (1-t^2)^{|A_2|}    \textrm{DC} \big(D^{A_1} f(x_1) \big) \textrm{DC} \big(  D^{A_2} f(x_2) \big) > 0  . 
\end{equation*}
\end{proof}

\end{appendix}

\medskip

\bigskip
\bibliographystyle{plain}
\bibliography{paper}

\end{document}